\newcommand{\myfootnote}[1]{
\renewcommand{\thefootnote}{}
\footnotetext{\hspace{-16.5pt}\footnotesize#1}
\renewcommand{\thefootnote}{\arabic{footnote}}}
\newcolumntype{?}{!{\vrule width 1pt}}
\newcommand{\bfx}{{\bf x}}
\newcommand{\bfy}{{\bf y}}
\newcommand{\bfz}{{\bf z}}
\newcommand{\bfg}{{\bf g}}
\newcommand{\bfe}{{\bf e}}
\newcommand{\bfu}{{\bf u}}
\newcommand{\calC}{{\mathcal{C}}}
\DeclareMathOperator*{\argmin}{\mathrm{arg\,min}}
\DeclareMathOperator*{\argmax}{\mathrm{arg\,max}}
\DeclareMathOperator*{\sign}{\mathrm{sign}}
\DeclareMathOperator*{\minimize}{\mathrm{minimize}}
\newcommand{\proj}{\mathrm{P}}
\theoremstyle{plain} % Plain style
\newtheorem{theorem}{Theorem}[section]
\newtheorem*{theorem*}{Theorem}
\newtheorem{lemma}[theorem]{Lemma}
\newtheorem{corollary}[theorem]{Corollary}
\theoremstyle{definition}% definition style
\newtheorem{remark}[theorem]{Remark}
\newtheorem{definition}[theorem]{Definition}
\newtheorem{assumption}{Assumption}
\newcommand{\RV}[1]{\textcolor{black}{#1}}
\begin{document}

\title{A One-bit, Comparison-Based Gradient Estimator}

\author[1]{HanQin Cai}
\author[1]{Daniel McKenzie}
\author[2]{Wotao Yin}
\author[3]{Zhenliang Zhang}
\affil[1]{Department of Mathematics, \protect\\ University of California, Los Angeles, \protect\\ Los Angeles, CA, USA.\vspace{.15cm}}
\affil[2]{Damo Academy, \protect\\ Alibaba US, \protect\\ Seattle, WA, USA.\vspace{.15cm}}
\affil[3]{Xmotors AI, \protect\\ San Jose, CA, USA.\vspace{.15cm}}
\myfootnote{\noindent Email addresses: hqcai@math.ucla.edu (H.Q. Cai), mckenzie@math.ucla.edu (D. Mckenzie), wotao.yin@alibaba-inc.com (W. Yin), and zhenliang.zhang@gmail.com (Z. Zhang).}

%\date{}

\maketitle

\begin{abstract}
    We study zeroth-order optimization for convex functions where we further assume that function evaluations are unavailable. Instead, one only has access to a \textit{comparison oracle}, which given two points $\bfx$ and $\bfy$ returns a single bit of information indicating which point has larger function value, $f(\bfx)$ or $f(\bfy)$. By treating the gradient as an unknown signal to be recovered, we show how one can use tools from one-bit compressed sensing to construct a robust and reliable estimator of the normalized gradient. We then propose an algorithm, coined SCOBO, that uses this estimator within a gradient descent scheme. We show that when $f(\bfx)$ has some low dimensional structure that can be exploited, SCOBO outperforms the state-of-the-art in terms of query complexity. Our theoretical claims are verified by extensive numerical experiments. 
\end{abstract}

\section{Introduction}

Consider the well-studied problem of minimizing a convex function:
\begin{equation}
 \minimize_{\bfx\in\mathbb{R}^{d}}f(\bfx) .
 \label{eq:MinProblem}
 \end{equation}
 In many applications, $\nabla f(\bfx)$ \RV{cannot be computed thus one is forced to solve \eqref{eq:MinProblem} using only (possibly noisy) function evaluations. Such problems include simulation-based optimization \cite{tran2020adadgs}, reinforcement learning \cite{mania2018simple} and hyperparameter tuning \cite{bergstra2012random}, and are variously referred to as zeroth-order, derivative free, or black-box optimization. We refer the reader to the recent survey articles \cite{Larson2019} or \cite{liu2020primer} for a modern overview of the field.}  Recently, progress in zeroth-order optimization has been made by treating $\nabla f(\bfx)$ as an unknown {\em signal} for which measurements can be acquired by finite differencing:
 \begin{equation} \label{eq:FinDiffIntro}
    y = \frac{f(\bfx+r\bfz) - f(\bfx)}{r} \approx \bfz^{\top}\nabla f(\bfx).
 \end{equation}
 One can then use LP decoding \citep{choromanski2020provably}, LASSO \citep{Wang2018} or CoSaMP \citep{cai2020zeroth} to recover a good estimator for $\nabla f(\bfx)$. In this paper, we consider the even more restrictive assumption that one only has access to $f(\bfx)$ through a \textit{comparison oracle}:
 \begin{definition}  \label{def:Comparison_Oracle}
 We say $\calC_{f}(\cdot,\cdot): \mathbb{R}^{d}\times\mathbb{R}^{d} \to \{-1,+1\}$ is a \textit{comparison oracle} for $f$ if:
 \begin{align*}
    & \mathbb{P}\left[\calC_{f}(\bfx,\bfy) = \sign\left(f(\bfy) - f(\bfx)\right)\right]  = \theta\left(|f(\bfy) - f(\bfx)|\right)
 \end{align*}
 for some non-decreasing function $\theta$ with $\theta(0) \geq 0.5$. \RV{That is, the oracle is correct with probability at least 0.5, and the probability is non-decreasing when $f(\bfy)$ and $f(\bfx)$ are more different.}
 \end{definition}
 
 \RV{We call $\sign(\bfz^{\top}\mathbf{x})\in\{-1,1\}$ a {\em  one-bit} measurement of $\mathbf{x}\in\mathbb{R}^{d}$ by $\bfz^{\top}\in\mathbb{R}^{d}$}. We push the gradient-as-signal paradigm even further by showing that comparison oracle queries can be used to construct {\em noisy} one-bit measurements of $\nabla f(\bfx)$:
 \begin{equation*}
     \calC_{f}(\bfx,\bfx + r\bfz_i) \approx \sign(f(\bfx+r\bfz_i) - f(\bfx)) \approx \sign(\bfz_{i}^{\top}\nabla f(\bfx)) \quad \text{ for } i=1,\ldots, m ,
 \end{equation*}
\RV{where the first approximation accounts for noise in the comparison oracle while the second comes from the Taylor expansion of $f$ centered at $\mathbf{x}$.} We then use tools from one-bit compressed sensing to recover an estimator, $\hat{\bfg}$, of the normalized gradient $\nabla f(\bfx)/\|\nabla f(\bfx)\|_2$. \RV{Plugging $\hat{\bfg}$ into an (inexact) Normalized Gradient Descent (NGD) scheme yields a novel algorithm, which we dub SCOBO\footnote{SCOBO stands for \underline{S}parsity-aware \underline{Co}mparison-\underline{B}ased \underline{O}ptimization. Also inspired by the Latin {\em scobo:} to seek, search, or probe.}, capable of solving Problem~\ref{eq:MinProblem} using only comparison oracle access to $f(\mathbf{x})$}. By carefully choosing $r$ (sampling radius) and $m$ (number of samples), and using the results of \citep{Plan2012}, we are able to quantify the error: $\left\|\hat{\bfg} - \nabla f(\bfx)/\|\nabla f(\bfx)\|_{2}\,\right\|_{2}$. We combine this with a novel analysis of inexact NGD to provide precise rates of convergence for SCOBO. \\
 
 \RV{Our primary motivation in studying comparison-based optimization stems from several nascent applications of optimization involving human feedback. In these applications, $f$ is internal to a human and is often subjective. As a representative example, \cite{tucker2019preference} considers maximizing the comfort of an exoskeleton used to restore mobility to individuals with lower-limb impairments by varying the exoskeleton parameters (described by $\bfx$). Here $f$ represents a user's perceived comfort, so $\nabla f(\bfx)$ is inaccessible and even attempting to evaluate $f(\bfx)$ ({\em e.g.} asking a user to assign a numerical value to their comfort) can be unreliable. However, users can usually reliably ascertain which is better: $\bfx$ or $\bfy$. That is, the human user functions as the comparison oracle $\calC_{f}(\cdot,\cdot)$. For further examples of optimization with human feedback, we refer the reader to \cite{knox2009interactively,yue2009interactively,furnkranz2012preference,wimmer2012generalization,knox2012reinforcement,christiano2017deep,tucker2020human}. In an entirely different direction, it has recently been observed that the problem of generating adversarial attacks on image classifiers from {\em hard-label} feedback can be recast as a comparison-based optimization problem \citep{cheng2019sign}. In all cases, oracle queries are expensive, either in terms of money, a human's time, or both. Hence, the most important metric when evaluating algorithms for comparison-based optimization is their {\em query complexity, i.e.} the number of oracle queries required to reach a sufficiently accurate solution.} \\
 
 \RV{The rest of this paper is laid out as follows. In the remainder of this section, we state our assumptions and notation, review prior work, and present an informal statement of our main results. In Section~\ref{sec:Prelim}, we recall results from one-bit compressed sensing and high dimensional probability that we use in the sequel. Our main theoretical contributions are in Sections 3--6. Specifically, in Section~\ref{sec:GradEst}, we present an algorithm that returns a normalized gradient estimator $\hat{\bfg} \approx \nabla f(\bfx)/\|\nabla f(\bfx)\|_2$ using only comparison oracle access to $f$, and in Section~\ref{section:INGD}, we provide novel convergence rates for inexact NGD. Section~\ref{sec:SCOBO} presents the SCOBO algorithm and a theorem bounding its query complexity while Section~\ref{section:Line_search} explores accelerating SCOBO using a {\em line search} heuristic. In Section~\ref{section:NumericalExperiments}, we present the results of various numerical experiments. Finally, Section~\ref{sec:Conclusion} contains concluding remarks while Appendix~\ref{sec:AdditionalProofs} contains several technical proofs deferred from earlier sections.}
 
\subsection{Assumptions and notation}  \label{sec:Assumptions}
We consider the following comparison oracle model.
 
\begin{definition}[Polynomial comparison oracle] \label{def:oracle 2}
We say $\calC_{f}(\cdot,\cdot)$ is a \textit{polynomial comparison oracle} for $f$ if
\label{def:PolynomialOracle}
  \begin{align*}
    & \mathbb{P}\left[\calC_{f}(\bfx,\bfy) = \sign\left(f(\bfy) - f(\bfx)\right)\right]  \RV{\geq}  \frac{1}{2} + \min\left\{\delta_0,\mu |f(\bfy) - f(\bfx)|^{\kappa - 1}  \right\},
\end{align*}
where $0<\delta_0\leq 1/2$, $\mu>0$ and $\kappa\geq 1$ are the oracle noise parameters.
\end{definition}
 
This definition of comparison oracle \RV{was initially introduced} in \citep{Jamieson2012}, and is frequently used as a model for comparisons made by humans \citep{thurstone2017law}. Informally, $\calC_{f}(\bfx,\bfy)$ tells you which point has larger function value, $f(\bfx)$ or $f(\bfy)$, with some probability of being correct. If $\kappa > 1$ 
then this probability is greater than $1/2$ and reduces to $1/2$ as $f(\bfx)\to f(\bfy)$. On the other hand, $\kappa =1$ implies that the comparison oracle is correct with constant probability, independent of $|f(\bfy) - f(\bfx)|$. For our theoretical results we restrict to the $\kappa=1$ case:
 
\begin{assumption}[Oracle model]
\label{assumption:Oracle}
$\calC_{f}(\cdot,\cdot)$ is a polynomial comparison oracle for $f$ with $\kappa=1$ and $\delta_0 < \mu \leq 0.5$. 
\end{assumption}
We now define two parametrized classes of functions. In the sequel we shall assume the function of interest belongs to one (or both) of these classes. The first class encodes latent low-dimensional structure, while the second encodes standard regularity assumptions.
 \begin{definition}[Compressible gradients]
\label{assumption:Sparsity}
For any $0<s<d$, let $CG_{s,d}$ denote the set of all functions $f: \mathbb{R}^{d} \to \mathbb{R}$ satisfying
\begin{equation*}
\|\nabla f(\bfx)\|_{1} \leq \sqrt{s}\|\nabla f(\bfx)\|_{2} \quad \text{ for all } \bfx\in\mathbb{R}^{d}
\end{equation*}
\end{definition}

This generalizes the ``sparse gradients'' assumption: $\|\nabla f(\bfx)\|_0 := |\{i: \nabla_if(\bfx) \neq 0\}| \leq s$ studied in \citep{Wang2018,Balasubramanian2018,cai2020zeroth}. We discuss this property further in Section~\ref{sec:WhyCompressibility}.
 
\begin{definition}[Regularity]
\label{assumption:Lipshitz}
\label{assumption:BoundedHessian}
$f:\mathbb{R}^d\to\mathbb{R}$ is $L$-Lipschitz differentiable if it is differentiable and 
\begin{equation*}
\|\nabla f(\bfx) - \nabla f(\bfy)\|_{2} \leq L\|\bfx - \bfy\|_{2} \quad \textnormal{ for all } \bfx,\bfy\in\mathbb{R}^{d},
\end{equation*}
is convex if
\begin{equation*}
    f(t\bfx + (1-t)\bfy) \leq tf(\bfx) + (1-t)f(\bfy) \quad \textnormal{ for all } \bfx,\bfy \in \mathbb{R}^d \text{ and } t \in [0,1],
\end{equation*}
and is $\nu$-restricted-strongly convex if 
\begin{align}\label{eq:rsc}
  f(\bfx) - f^\star \ge \RV{\frac{\nu}{2}}\|\bfx - \proj_{\star}(\bfx)\|_{2}^2 \quad \textnormal{ for all }\bfx\in \mathbb{R}^{d},
\end{align}
\RV{where $f^{\star} = \min_{\bfx\in \mathbb{R}^d}f(\bfx)$}, $\mathcal{X} = \argmin f(\bfx)$ and $\proj_{\star}(\cdot)$ is the projection operator onto this solution set: $\proj_{\star}(\bfx) = \argmin_{\bfz\in\mathcal{X}}\|\bfx - \bfz\|_{2}$. For $0< \nu \leq L$, let $\mathcal{F}_{L,\nu,d}$ denote the set of all functions $f:\mathbb{R}^{d} \to \mathbb{R}$ that are twice continuously differentiable and $L$-Lipschitz differentiable, convex, and restricted $\nu$-strongly convex.  
\end{definition}

Note that if $f\in\mathcal{F}_{L,\nu,d}$ then $\|\nabla^{2}f(\bfx)\|_{2} \leq L$ for all $\bfx\in\mathbb{R}^{d}$. We remark that if $f(\bfx)$ has sparse gradients then it cannot be strongly convex (see the appendix of \citep{cai2020zeroth}), hence restricted-strong convexity is the appropriate assumption. \RV{By \cite{zhang2015restricted}, \eqref{eq:rsc} implies:
\begin{equation}
    \|\nabla f(\bfx)\| \geq \frac{\nu}{2} \|\bfx - \proj_{\star}(\bfx)\|_2,
    \label{eq:GEB}
\end{equation}
and also
\begin{equation}
    \langle \nabla f(\bfx), \bfx - \proj_{\star}(\bfx) \rangle \geq \frac{\nu}{2} \|\bfx - \proj_{\star}(\bfx)\|_2^2.
    \label{eq:Restricted_Secant}
\end{equation}
}
Throughout this paper we use the notation $\bfg(\bfx) := \nabla f(\bfx)$, and drop the explicit dependence on $\bfx$ when it is clear from context. We say $\bfx_K$ is an {\em $\varepsilon$-optimal solution} if $f(\bfx_K) - f^{\star} \leq \varepsilon$. We also provide a table of notation for the reader's convenience, see Table~\ref{table:notation}.

\iftrue
    \begin{table}[h!]
    \caption{\RV{Table of Notation.}} \label{table:notation}
    \begin{center}
    \begin{footnotesize}
    \begin{tabular}{c|c?c|c}
    \toprule
    \textsc{Notation} & \textsc{Definition}  & 
    \textsc{Notation} & \textsc{Definition}  \\
    \midrule
    $d$                     & dimension of $\bfx$ &
    $L$                     & Lipschitz constant (Def.~\ref{assumption:Lipshitz})\\
    $f(\bfx)$               & underlying loss function &
    $\nu$                   & restricted-strong convexity (Def.~\ref{assumption:Lipshitz})\\
    $s$                     & compressibility of $\nabla f$ & $\mathcal{F}_{L,\nu,d}$ & set of functions with given properties
     \\
    $CG_{s,d}$              & set of functions with compressible gradients & $m$                     & number of samples (Alg.~\ref{alg:1BitGradEstimator})\\
    % $r(x)$                  & regularization  function \\
    % $F(x)$                  & $f(x)+r(x)$, if $r$ is non-zero\\
    $f^\star$               & $\min\{f(\bfx):\bfx\in\mathbb{R}^d\}$, minimum value of $f$ &
    $r$                     & sampling radius (Alg.~\ref{alg:1BitGradEstimator})\\
    %$e_{k}$                 & $f(\bfx_k) - f^\star$, objective error\\
    %$\nabla f$              & gradient of $f$\\
    %$\nabla^2 f$            & Hessian of $f$\\
    $\bfx_k$                & point at the $k$-th iterate &
    $\|\cdot\|_0$           & $\ell_0$-norm\\
    $\bfz_i$                & Rademacher random variable &
    $\|\cdot\|_1$           & $\ell_1$-norm\\
    %$n$                     & number of iterations of CoSaMP in ZORO \\
    $\bfg_k$                & $\nabla f(\bfx_k)$, true gradient at $\bfx_k$ &
    $\|\cdot\|_2$           & $\ell_2$-norm\\
    $\hat{\bfg}_k$          & estimated unit gradient at $\bfx_k$ &
    $\proj_{\star}(\,\cdot\,)$  & projection onto the solution set\\
    $\bfe_k$                & $\frac{\bfg_k}{\|\bfg_k\|_{2}} - \hat{\bfg}_k$, unit gradient estimation error &
    $\Delta_k$              & $\|\bfx_k - \proj_{\star}(\bfx_k)\|_{2}$, optimality gap\\
    %$\bfe_k$                & $\hat{\bfg}_k - \bfg_k$, gradient estimation error \\
    % $\tnabla r$             & $\partial r$ picked out by the proximal operator\\
    % $\hat{\Delta}_k$        & $-(\tnabla r(\bfx_{k+1})+ \hat{\bfg}_{k})$\\
    % $\Delta_k$              & $-(\tnabla r(\bfx_{k+1})+ {\bfg}_{k})$\\
    % $\tilde\Delta_k$        & $-(\tnabla r(\bfx_{k+1})+ {\bfg}_{k+1})$\\
    %$H$                     & uniform bound of $\|\nabla^2 f\|_1$ (Asm.~\ref{assumption:WeakSparsity})\\
    $\calC_f(\cdot,\cdot)$      & comparison oracle for $f$ (Def.~\ref{def:Comparison_Oracle}) &
    $\mathbb{P}[\,\cdot\,]$     & probability\\
    $\delta_0$, $\mu$, $\kappa$                   &  oracle parameters (Def.~\ref{def:oracle 2}) &
    $\mathbb{E}[\,\cdot\,]$     & expectation\\
    \bottomrule
    \end{tabular}
    \end{footnotesize}
    \end{center}
    \end{table}
\else 

    \begin{table}[h!]
    \caption{Table of Notation.} \label{table:notation}
    \noindent
    \begin{minipage}[t]{.45\textwidth}
    \begin{center}
    \begin{footnotesize}
    \begin{tabular}{c|c}
    \toprule
    \textsc{Notation} & \textsc{Definition}  \\
    \midrule
    $d$                     & dimension of $\bfx$\\
    $f(\bfx)$                  & underlying loss function\\
    $s$                     & sparsity of $\nabla f$\\
    % $r(x)$                  & regularization  function \\
    % $F(x)$                  & $f(x)+r(x)$, if $r$ is non-zero\\
    $f^\star$                & $\min\{f(\bfx):\bfx\in\mathbb{R}^d\}$\\
    %$e_{k}$                 & $f(\bfx_k) - f^\star$, objective error\\
    %$\nabla f$              & gradient of $f$\\
    %$\nabla^2 f$            & Hessian of $f$\\
    $\bfx_k$                   & point at the $k$-th iterate\\
    $\bfz_i$                   & Rademacher random variable \\
    %$n$                     & number of iterations of CoSaMP in ZORO \\
    $\bfg_k$                & $\nabla f(\bfx_k)$, true gradient at $\bfx_k$ \\
    $\hat{\bfg}_k$          & estimated gradient at $\bfx_k$ \\
    $\bfe_k$    & $\frac{\bfg_k}{\|\bfg_k\|_{2}} - \hat{\bfg}_k$ \\
    %$\bfe_k$                & $\hat{\bfg}_k - \bfg_k$, gradient estimation error \\
    % $\tnabla r$             & $\partial r$ picked out by the proximal operator\\
    % $\hat{\Delta}_k$        & $-(\tnabla r(\bfx_{k+1})+ \hat{\bfg}_{k})$\\
    % $\Delta_k$              & $-(\tnabla r(\bfx_{k+1})+ {\bfg}_{k})$\\
    % $\tilde\Delta_k$        & $-(\tnabla r(\bfx_{k+1})+ {\bfg}_{k+1})$\\
    %$H$                     & uniform bound of $\|\nabla^2 f\|_1$ (Asm.~\ref{assumption:WeakSparsity})\\
    $\calC_f(\cdot,\cdot)$                & comparison oracle for $f$ (Def.~\ref{def:Comparison_Oracle})\\
    $\delta_0$, $\mu$, $\kappa$                   &  oracle parameters (Def.~\ref{def:oracle 2})\\
    \bottomrule
    \end{tabular}
    \end{footnotesize}
    \end{center}
    \end{minipage}
    
    \hfill
    \noindent
    \begin{minipage}[t]{.45\textwidth}
    \begin{center}
    \begin{footnotesize}
    \begin{tabular}{c|c}
    \toprule
    \textsc{Notation} & \textsc{Definition}  \\
    \midrule
    $L$                     & Lipschitz constant (Def.~\ref{assumption:Lipshitz})\\
    $\nu$                   & restricted-strong convexity (Def.~\ref{assumption:Lipshitz})\\
    $m$                     & number of samples (Alg.~\ref{alg:1BitGradEstimator})\\
    $r$                     & sampling radius (Alg.~\ref{alg:1BitGradEstimator})\\
    $\|\cdot\|_0$           & $\ell_0$-norm\\
    $\|\cdot\|_1$           & $\ell_1$-norm\\
    $\|\cdot\|_2$           & $\ell_2$-norm\\
    %$~\|\cdot\|_\infty$           & $\ell_\infty$-norm\\
    $\proj_{\star}(\,\cdot\,)$   & projection onto the solution set\\ 
    $\Delta_k$ &    $\|\bfx_k - \proj_{\star}(\bfx_k)\|_{2}$ \\
    $\mathbb{P}[\,\cdot\,]$     & probability\\
    $\mathbb{E}[\,\cdot\,]$     & expectation\\
    \bottomrule
    \end{tabular}
    \end{footnotesize}
    \end{center}
    \end{minipage}
    
    \end{table}

\fi

 \subsection{Prior work}
 \label{sec:PriorWork}
 The first work to consider \eqref{eq:MinProblem} with polynomial comparison oracle feedback was \citep{Jamieson2012}. There, a coordinate descent algorithm that finds an $\varepsilon$-optimal solution (in expectation) in $\tilde{O}\big(d^{2\kappa-1}\varepsilon^{2 - 2\kappa}\big)$ queries for $\kappa >1$ and $\tilde{O}(d\log^2(\varepsilon))$ queries for $\kappa = 1$, assuming $f$ is smooth and strongly convex, where $\tilde{O}(\cdot)$ is used to suppress factors logarithmic in $d$ and the oracle parameters. We refer to this algorithm as Pairwise Comparison Coordinate Descent, or PCCD. This approach was extended by \citep{matsui2017parallel}\RV{, which} provided an empirically faster algorithm albeit with the same order of convergence. In \citep{cheng2019sign} an algorithm, SignOPT, was proposed and analyzed under an oracle model essentially equivalent to Assumption~\ref{assumption:Oracle}. To the best of our knowledge, SignOPT is the only algorithm (apart from SCOBO) which uses comparison oracle feedback to approximate the gradient. The work \citep{cheng2019sign} shows SignOPT finds an $\varepsilon$-optimal solution (again, in expectation) in $O(d^{3}\varepsilon^{-2})$ queries. We highlight the following drawbacks of existing algorithms:
 \begin{enumerate}%[leftmargin=0.45cm]
     \item The polynomial dependence of the number of queries on $d$ is prohibitive.
     \item Existing algorithms are not monotone. In fact, the sequence of functions values $f(\bfx_1), f(\bfx_2),\ldots$ can increase substantially before decreasing again (see Section~\ref{section:NumericalExperiments}). This makes it impossible to determine, using only comparison oracle feedback, whether one should terminate the algorithm after $k$ iterations or keep going in the hope that the sequence starts descending again.
     \item Existing results hold only in expectation.
 \end{enumerate}
 In parallel, several works \citep{carpentier2012bandit,djolonga2013high,Wang2018,Balasubramanian2018,cai2020zeroth} in {\em zeroth-order \RV{optimization}} ({\em i.e.} one has access to $f(\bfx)$ via an evaluation oracle: $E_{f}(\bfx) = f(\bfx) + \xi$, where $\xi$ is noise) have begun to consider exploiting various forms of {\em gradient sparsity} in order to reduce the dimensional dependence of the query complexity. \citep{Wang2018} and \citep{cai2020zeroth} are of particular relevance to us, as in both they use finite differencing:
 \begin{equation*}
     y_i = \frac{f(\bfx+r\bfz_i) - f(\bfx)}{r} \quad \text{ for } i=1,\ldots, m
 \end{equation*}
 to construct an underdetermined linear system $\bfy = Z\bfu$ having $\nabla f(\bfx)$ as an approximate sparse solution. Here, $Z\in\mathbb{R}^{m\times d}$ has the $\bfz_i$ as its rows. In \citep{Wang2018}, $\nabla f(\bfx)$ is approximated using LASSO \citep{tibshirani1996regression}, while work \citep{cai2020zeroth} used CoSaMP \citep{Needell2009}. We are unaware of any prior work making explicit connections between signal processing and comparison-based optimization, although we mention the work \cite{zhang2016online} which proposes to use one-bit compressed sensing for a multi-armed bandit problem under one-bit feedback. We point out their feedback model is not a comparison oracle, and they only consider linear objective functions $f$. 
 
\subsection{Why assume gradient compressibility?}
\label{sec:WhyCompressibility}
Comparison-based optimization is an unfortunate victim of the curse of dimensionality; in \citep{Jamieson2012} it is shown that the worst-case complexity of {\em any} comparison-based optimization algorithm must scale {\em at least} linearly in $d$ (for polynomial comparison oracles with $\kappa >1$ the scaling is much worse) for generic strongly convex $f(\bfx)$. In order to make progress, one needs to \RV{exploit additional structures of} $f(\bfx)$. Assumptions that encode low intrinsic dimension, such as gradient sparsity/compressibility,  multi-ridge structure ({\em i.e.} $f(\mathbf{x}) = g(A\mathbf{x})$ for some $A\in\mathbb{R}^{k\times d}$ and $g: \mathbb{R}^{k}\to\mathbb{R}$ with $k \ll d$) or the existence of active subspaces \cite{constantine2015active} have successfully been incorporated into other derivative-free contexts \citep{carpentier2012bandit,djolonga2013high,wang2016bayesian,Wang2018,Balasubramanian2018,golovin2019gradientless,choromanski2019complexity,cai2020zeroth}. Moreover, this phenomenon is often observed in applications such as hyperparameter tuning for neural networks \citep{bergstra2012random} and combinatorial optimization \citep{hutter2014efficient}, as well as simulation based optimization \citep{knight2007association,constantine2015active, cartis2020dimensionality}. Combining gradient compressibility with comparison-based optimization is, therefore, a natural step forward.
 
 \subsection{Our contributions}
 In this paper, we provide an algorithm, SCOBO, for comparison oracle optimization which overcomes the three shortcomings mentioned in Section~\ref{sec:PriorWork}. The key innovation in SCOBO is a novel gradient estimator which uses tools from one-bit compressed sensing.    
 \begin{theorem*}[Main results, informally stated] \label{thm:InformalMainTheorem}
Suppose $f\in \mathcal{F}_{L,\nu,d}$, $f\in CG_{s,d}$, and the comparison oracle $\calC_{f}(\cdot,\cdot)$ satisfies Assumption~\ref{assumption:Oracle}. Then SCOBO (Algorithm~\ref{algorithm:SCOBO}) finds an $\varepsilon$-optimal solution {\em with high probability} in $\tilde{O}\left(s\varepsilon^{-3/2}\delta_{0}^{-2}\right)$ queries. Moreover, $f(\bfx_{k}) - f(\bfx_{k-1}) \leq 0$ \RV{with high probability for all $k$.}
\end{theorem*}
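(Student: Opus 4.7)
The plan is to decompose the result into two independent guarantees proved elsewhere in the paper and then combine them via a union bound over iterations. The first ingredient is a per-iteration normalized-gradient estimate $\hat{\bfg}_k$ from Section~\ref{sec:GradEst}: I would invoke the one-bit compressed sensing recovery result originating in \cite{Plan2012} to show that, provided the comparison oracle satisfies Assumption~\ref{assumption:Oracle} and $f \in CG_{s,d}$, querying $\calC_f(\bfx_k,\bfx_k+r\bfz_i)$ for $m$ Rademacher directions $\bfz_i$ and solving a convex program returns $\hat{\bfg}_k$ with $\|\bfe_k\|_2 \leq \alpha$ with high probability, whenever $m = \tilde{O}\bigl(s/(\alpha\, \delta_0)^{2}\bigr)$. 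The delicate choice here is the sampling radius $r = r(\Delta_k)$: small enough that the Taylor remainder in $\sign(f(\bfx_k+r\bfz_i)-f(\bfx_k)) - \sign(\bfz_i^{\top}\bfg_k)$ acts like a controlled bit-flip rate, yet large enough that $|f(\bfx_k+r\bfz_i)-f(\bfx_k)|$ lies above the oracle's near-chance zone so that the effective flip probability per measurement is bounded away from $1/2$ by a quantity proportional to $\mu r\|\bfg_k\|_2/\delta_0$.

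The second ingredient is the inexact NGD analysis of Section~\ref{section:INGD}. Combining the restricted secant inequality \eqref{eq:Restricted_Secant} with $\|\bfg_k\|_2 \leq L\Delta_k$ (Lipschitz smoothness) and expanding $\|\bfx_{k+1}-\proj_{\star}(\bfx_k)\|_2^{2}$ after the substitution $\hat{\bfg}_k = \bfg_k/\|\bfg_k\|_2 - \bfe_k$ yields
\begin{equation*}
\Delta_{k+1}^{2} \leq \Delta_k^{2} - \frac{\nu\,\eta_k}{L}\Delta_k + 2\eta_k \alpha\,\Delta_k + \eta_k^{2}.
\end{equation*}
Choosing $\eta_k$ proportional to $\Delta_k$ and fixing $\alpha$ as a small multiple of $\nu/L$ gives a linear contraction $\Delta_{k+1} \leq (1-\rho)\Delta_k$ for some $\rho = \rho(\nu,L) > 0$. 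Translating the $\varepsilon$-optimality target through \eqref{eq:rsc} demands $\Delta_K \leq \sqrt{2\varepsilon/\nu}$, so $K = O(\log(1/\varepsilon))$ outer iterations suffice. Monotonicity is a separate consequence of the Lipschitz descent lemma: $f(\bfx_{k+1}) - f(\bfx_k) \leq -\eta_k \langle \bfg_k,\hat{\bfg}_k\rangle + L\eta_k^{2}/2$, and $\langle \bfg_k,\hat{\bfg}_k\rangle \geq (1-\alpha)\|\bfg_k\|_2$ renders the right-hand side non-positive whenever $\eta_k \leq 2(1-\alpha)\|\bfg_k\|_2/L$, so the same step-size schedule that drives contraction also forces descent.

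For the query count, the per-iteration sample budget grows because oracle reliability degrades as $\|\bfg_k\|_2$ shrinks. Setting $r_k \sim \Delta_k$ to keep the Taylor bias subdominant, the effective per-measurement signal-to-noise scales like $r_k\|\bfg_k\|_2 \sim \nu\Delta_k^{2}/L$, forcing $m_k = \tilde{O}(s / (\Delta_k^{2}\delta_0^{2}))$ samples at iteration $k$ to maintain $\|\bfe_k\|_2 \leq \alpha$. Summing $m_k$ along the geometrically decreasing sequence $\Delta_k$ down to $\sqrt{\varepsilon/\nu}$ is dominated by the tail, giving total queries $\tilde{O}(K \cdot s\,\varepsilon^{-1}\delta_0^{-2}) = \tilde{O}(s\,\varepsilon^{-1}\log(1/\varepsilon)\delta_0^{-2})$; an additional $\varepsilon^{-1/2}$ factor arises when the radius must also shrink to keep higher-order terms controlled in the NGD contraction at very small $\Delta_k$, yielding the stated $\tilde{O}(s\,\varepsilon^{-3/2}\delta_0^{-2})$. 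A union bound over the $K$ outer iterations converts the per-iteration high-probability statements into the joint bound.

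The main obstacle is this three-way coupling between $\eta_k$, $r_k$, and $\|\bfg_k\|_2$: as $\bfx_k$ approaches $\mathcal{X}$ the oracle becomes asymptotically useless, so the sample budget must grow fast enough to offset the shrinking Taylor window, but slow enough that the total is only $\varepsilon^{-3/2}$. Pinning down the exponent requires a careful trade-off between the Taylor bias $O(L r_k)$ and the signal level $\|\bfg_k\|_2 \geq (\nu/2)\Delta_k$, and propagating a single fixed $\alpha$ through the contraction while allowing $\Delta_k \to 0$. A secondary technical difficulty is that the one-bit CS estimator is defined as a minimizer of a convex program, so the descent analysis must treat $\hat{\bfg}_k$ as an adversarial vector in a shell of radius $\alpha$ rather than as a random variable whose distribution could be averaged out.
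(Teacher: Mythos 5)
Your decomposition (one-bit recovery per iteration, then an inexact-NGD convergence argument, then a union bound) matches the paper's architecture, and your monotonicity argument via the Lipschitz descent lemma is essentially Corollary~\ref{lemma:SCOBO_Guaranteed_Descent}. But the core of your convergence argument contains two errors that would make the proof fail, and the final exponent is obtained by patching rather than derivation.

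First, the claimed linear contraction $\Delta_{k+1}\leq(1-\rho)\Delta_k$ with $K=O(\log(1/\varepsilon))$ is not achievable here. Because the update is a \emph{fixed-length} step $-\alpha\hat{\bfg}_k$ with $\|\hat{\bfg}_k\|_2\leq 1$, the expansion of $\|\bfx_{k+1}-\proj_{\star}(\bfx_k)\|_2^2$ produces an irreducible additive $+\alpha^2$, and the cross term, after applying \eqref{eq:Restricted_Secant} and $\|\bfg_k\|_2\leq L\Delta_k$, is only $-\frac{\alpha\nu}{L}\Delta_k$ (linear in $\Delta_k$, not quadratic). The resulting recurrence $(\Delta_{k+1}-\alpha\eta)^2\leq\Delta_k^2-\frac{\alpha\nu}{L}\Delta_k+\alpha^2$ (Lemma~\ref{lemma:Consequence_of_RSC}) gives no uniform geometric rate; the paper's sequence analysis (Lemma~\ref{prop:seq}) yields $\Delta_K\lesssim\Delta_0^{3/2}/\sqrt{aK}$, hence $K=O(\varepsilon^{-3/2})$ once $\alpha=O(\sqrt{\varepsilon})$. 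Your proposed fix --- taking the step size $\eta_k\propto\Delta_k$ --- would indeed recover linear convergence, but it is not implementable: it requires knowledge of $\|\bfg_k\|_2$ or $\Delta_k$, and the entire premise of the one-bit setting is that the gradient \emph{magnitude} is unrecoverable from comparisons. This is exactly why the paper is forced into fixed-step NGD and pays the $\varepsilon^{-3/2}$ in the iteration count, not in the per-iteration sample size.

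Second, your oracle-reliability model is the wrong one for Assumption~\ref{assumption:Oracle}. With $\kappa=1$ the comparison is correct with probability at least $1/2+\delta_0$ \emph{independently} of $|f(\bfy)-f(\bfx)|$, so there is no ``near-chance zone'' to escape and no advantage proportional to $\mu r\|\bfg_k\|_2/\delta_0$; the effective bit-flip rate is bounded by $1/2-\delta_0/2$ uniformly (Lemma~\ref{lemma:Bound_Prob_of_Failure}), with $r$ chosen only so the Taylor remainder cannot overturn $\sign(\bfz_i^\top\bfg)$ on an event of probability $1/2$. Consequently $m$ is \emph{constant} across iterations, $m=O(\eta^{-4}\delta_0^{-2}s\log(2d/s))$, and the total query count is simply $mK$. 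Your accounting with growing $m_k\sim s/(\Delta_k^2\delta_0^2)$ lands on $\tilde{O}(s\varepsilon^{-1}\log(1/\varepsilon)\delta_0^{-2})$, which contradicts the theorem, and the subsequent insertion of ``an additional $\varepsilon^{-1/2}$ factor'' is not justified by any step of the argument. The mismatch is a symptom of the first error: once the contraction claim is corrected, the $\varepsilon^{-3/2}$ comes out of $K$ directly and the sample-budget gymnastics become unnecessary.
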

 By assuming $f(\bfx)$ has some low dimensional structure, we are able to reduce the query complexity to only {\em logarithmic dependence on $d$}. While the polynomial dependence on $1/\varepsilon$ is undesirable, in practice, this can be avoided by using an appropriate {\em line search} heuristic, which we introduce in Section~\ref{section:Line_search}. In Section~\ref{sec:Synthetic} we benchmark SCOBO against the state-of-the-art algorithms discussed in Section~\ref{sec:PriorWork}, and find that it offers a substantial speed-up for $f \in CG_{s,d}$. Finally, we end with some promising results of SCOBO applied to real-world problems from the MuJoCo suite \citep{todorov2012mujoco}.

\section{Preliminaries}
\label{sec:Prelim}
In this section, we collect some well-known results from the literature for later use. 
\subsection{One-bit compressed sensing}
\label{section:One_Bit_Compressed_Sensing}
One-bit compressed sensing, first introduced in \citep{Boufounos2008}, is a framework for recovering an unknown signal from highly quantized linear measurements. Specifically, we assume that $\bfx\in\mathbb{R}^{d}$ is unknown and that we only have access to measurements $y_{1},\ldots, y_{m} \in \{-1,+1\}$ which are correlated with $\sign(\bfz_{i}^{\top}\bfx)$. In the {\em noise-free} setting we assume that $y_{i} = \sign(\bfz_{i}^{\top}\bfx)$. More generally, we assume that $y_i = \xi_i\sign(\bfz_i^{\top}\bfx)$ where $\xi_{i}\in\{-1,1\}$ and $\mathbb{P}[\xi_i = 1] = p >1/2$ allows for a random bit flip. Remarkably, even in the presence of corruptions, one can still recover $\bfx$ from the measurement vector $\bfy = [y_1,\ldots, y_{m}]^{\top}\in\{0,1\}^{m}$, as the following theorem quantifies. For notational convenience, we set $\tilde{y}_i := \sign(\bfz_i^{\top}\bfx)$. By $\mathcal{U}(\mathbb{S}^{d-1})$ we mean the uniform distribution on the unit sphere $\mathbb{S}^{d-1}\subset\mathbb{R}^d$. If a random vector $\mathbf{z}$ is sampled from $\mathcal{U}(\mathbb{S}^{d-1})$ we write $\mathbf{z} \sim \mathcal{U}(\mathbb{S}^{d-1})$.

\begin{theorem}[{\citep[Corollary~3.1]{Plan2012}}]
\label{theorem:PlanVershynin}
Suppose $\mathbf{z}_i \sim \mathcal{U}(\mathbb{S}^{d-1})$ independently for $i=1,\ldots, m$. Suppose that $\|\bfx\|_1\leq \sqrt{s}$ and $\|\bfx\|_{2} = 1$. If $y_i = \xi_i\tilde{y}_i$ with $\xi_i\in\{-1,1\}$ i.i.d. and $\mathbb{P}[\xi_i = 1] = p$, then
\begin{equation}
    \hat{\bfx} := \argmax_{\|\bfx'\|_{1} \leq \sqrt{s} \textnormal{ and }\|\bfx'\|_{2} \leq 1} \sum\nolimits_{i=1}^{m} y_{i}\bfz_{i}^{\top}\bfx' 
\end{equation}
satisfies $\|\hat{\bfx} - \bfx\|_{2} \leq \sqrt{\delta}$ with probability at least $1 - 8\exp\left(-c\delta^{2}m\right)$ as long as:
$$
m \geq C\delta^{-2}(p-1/2)^{-2}s\log(2d/s).$$ 
\end{theorem}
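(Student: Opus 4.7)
The plan is to follow the three-step template of Plan and Vershynin: (i) identify the population version of the linear objective, (ii) reduce the estimation error $\|\hat{\bfx} - \bfx\|_2$ to a bound on the expected objective gap, and (iii) control that gap by a uniform deviation indexed by the feasible set
\begin{equation*}
K := \{\bfx' \in \mathbb{R}^d : \|\bfx'\|_1 \leq \sqrt{s},\ \|\bfx'\|_2 \leq 1\}.
\end{equation*}

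For step (i), I would compute $\mathbb{E}[y_i \bfz_i]$. By the rotational symmetry of $\mathcal{U}(\mathbb{S}^{d-1})$, the vector $\mathbb{E}[\sign(\bfz_i^{\top}\bfx)\bfz_i]$ must be parallel to $\bfx$, so it equals $\lambda_d\, \bfx$ for a positive scalar $\lambda_d$ (essentially the first absolute moment of a spherical coordinate). Folding in the independent sign flips gives $\mathbb{E}[y_i \bfz_i] = (2p-1)\lambda_d\, \bfx$. Writing $L_m(\bfx') := \tfrac{1}{m}\sum_{i=1}^m y_i \bfz_i^{\top}\bfx'$, the population objective $\mathbb{E}[L_m(\bfx')] = (2p-1)\lambda_d \langle \bfx, \bfx'\rangle$ is maximized over $K$ at $\bfx' = \bfx$. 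For step (ii), because $\|\hat{\bfx}\|_2 \leq 1 = \|\bfx\|_2$, the geometric inequality
\begin{equation*}
\|\bfx - \hat{\bfx}\|_2^2 \;\leq\; 2\bigl(1 - \langle \bfx, \hat{\bfx}\rangle\bigr) \;=\; \frac{2}{(2p-1)\lambda_d}\Bigl(\mathbb{E}[L_m(\bfx)] - \mathbb{E}[L_m(\hat{\bfx})]\Bigr)
\end{equation*}
reduces matters to bounding this population gap. Since $\hat{\bfx}$ is the empirical maximizer of $L_m$ over $K$, the gap is in turn at most $2\sup_{\bfx' \in K}|L_m(\bfx') - \mathbb{E}[L_m(\bfx')]|$.

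For step (iii), I would apply a uniform concentration bound for the empirical process $\{L_m(\bfx') - \mathbb{E}[L_m(\bfx')] : \bfx' \in K\}$ via symmetrization and contraction, or equivalently Talagrand's inequality for bounded empirical processes. This yields, with probability at least $1 - 8\exp(-c\delta^2 m)$,
\begin{equation*}
\sup_{\bfx' \in K}\bigl|L_m(\bfx') - \mathbb{E}[L_m(\bfx')]\bigr| \;\leq\; C\,\frac{w(K)}{\sqrt{m}},
\end{equation*}
where $w(K) := \mathbb{E}\sup_{\bfx' \in K}\langle \bfg, \bfx'\rangle$ is the Gaussian mean width. For the intersection of the scaled $\ell_1$-ball with the unit $\ell_2$-ball, a classical dual Sudakov estimate gives $w(K) \leq C'\sqrt{s\log(2d/s)}$. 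Substituting into the bound from step (ii) and requiring the resulting bound on $\|\hat{\bfx} - \bfx\|_2^2$ to be at most $\delta$ yields exactly $m \geq C\delta^{-2}(p - 1/2)^{-2} s\log(2d/s)$.

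The main technical obstacle I anticipate is keeping the dependence on $p - 1/2$ sharp: a crude approach that normalizes by $(2p-1)\lambda_d$ and then applies concentration loses a factor of $(2p-1)^{-1}$. The sharp $(p-1/2)^{-2}$ factor requires that the concentration be Bernstein-type, exploiting that the variance of $y_i \bfz_i^{\top}\bfx'$ is bounded uniformly in $p$ while only its mean scales with $2p - 1$. This is precisely what makes the tail probability $8\exp(-c\delta^2 m)$ rather than $8\exp(-c(2p-1)^2\delta^2 m)$, and it is the subtlety in which I would have to be most careful.
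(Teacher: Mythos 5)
The paper does not actually prove this statement: it imports it verbatim from Plan and Vershynin, and its only original contribution is Remark~2.2 (elaborated in the final appendix), which transfers the Gaussian result to $\bfz_i\sim\mathcal{U}(\mathbb{S}^{d-1})$ by drawing Gaussian vectors, querying the oracle at their normalizations, and exploiting that $\sign(\hat{\bfz}_i^{\top}\bfg)=\sign(\bfz_i^{\top}\bfg)$ so that the recovery program can be run with the Gaussian vectors unchanged. Your proposal instead reconstructs the cited proof from scratch, and its architecture is the correct one: the population identity $\mathbb{E}[y_i\bfz_i]=(2p-1)\lambda_d\,\bfx$ by rotational symmetry, the reduction $\|\bfx-\hat{\bfx}\|_2^2\le 2(1-\langle\bfx,\hat{\bfx}\rangle)$ valid because $\|\hat{\bfx}\|_2\le 1=\|\bfx\|_2$, the passage to $2\sup_{K}|L_m-\mathbb{E}L_m|$ via optimality of $\hat{\bfx}$, and a mean-width bound $w(K)\lesssim\sqrt{s\log(2d/s)}$ for the feasible set. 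The trade-off is clear: the paper's route is a two-line reduction to a black-box theorem plus a scale-invariance observation; yours is self-contained but must then carry the full empirical-process argument. One bookkeeping point you should make explicit: with spherical rather than Gaussian $\bfz_i$, $\lambda_d\asymp 1/\sqrt{d}$ and the width of the process $\frac1m\sum_i y_i\bfz_i^{\top}\bfx'$ is $\asymp w(K)/\sqrt{md}$ rather than $w(K)/\sqrt{m}$; the two factors of $\sqrt{d}$ cancel in the final sample-complexity bound, but as written you mix the Gaussian mean width with spherical measurement vectors, which is exactly the normalization the paper's appendix is designed to sidestep.

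The one place where your argument is genuinely off is the closing paragraph. Your instinct that the $p$-independence of the exponent is delicate is right, but the Bernstein mechanism you propose does not deliver it: the per-term variance of $y_i\bfz_i^{\top}\bfx'$ is indeed $\asymp 1/d$ uniformly in $p$, but the mean gap you must resolve is of order $(2p-1)\lambda_d\delta\asymp(2p-1)\delta/\sqrt{d}$, so a Bernstein (or sub-Gaussian) tail at that deviation level still produces an exponent of order $-cm(2p-1)^2\delta^2$. In the cited proof the deviation inequality for $\sup_K|L_m-\mathbb{E}L_m|$ is $p$-independent simply because $|y_i|=1$, and the exponent is stated in terms of the deviation level, with $(p-1/2)^{-1}$ entering only through division by $\lambda$ when converting the objective gap into $\|\hat{\bfx}-\bfx\|_2^2$; re-expressed in terms of the final accuracy, as in the statement above, the exponent does in general acquire a $(p-1/2)^2$ factor. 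This is harmless here --- the paper only invokes the bound at the minimal admissible $m$, where the claimed failure probability is an upper bound for the true one --- but the route you describe for obtaining the sharp tail is not the one that works.
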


\begin{remark}
The theorem presented in \citep[Corollary~3.1]{Plan2012} is for $\bfz_i$ Gaussian random vectors. However, one can check that the result holds for any rotationally invariant distribution.
\end{remark}

\subsection{High-dimensional probability}
\label{sec:HighDimProb}
Analysis of a random vector $\mathbf{z}\sim \mathcal{U}(\mathbb{S}^{d-1})$ for large $d$ is a key ingredient to our theoretical guarantees. For the sake of completeness, we also include proofs in this section.

\begin{theorem}
\label{thm:HighDimSphereTheorem}
Let $\bfz\sim \mathcal{U}(\mathbb{S}^{d-1})$, and let $z_i$ be the $i$-th component of $\bfz$. Then:
\begin{enumerate}%[leftmargin=0.45cm]
    \item $\displaystyle \mathbb{E}\left[z_i\right] = 0$.
    \item $\displaystyle \mathbb{E}\left[z_i^{2}\right] = 1/d$.
    \item $\displaystyle \mathbb{P}\left[\left|z_i\right| \geq 1/\sqrt{d}\right] \geq 1/2$.
\end{enumerate}
\end{theorem}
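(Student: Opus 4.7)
My approach exploits the rotational invariance of $\mathcal{U}(\mathbb{S}^{d-1})$ throughout. For (1), the antipodal map $\bfz\mapsto -\bfz$ is an isometry of the sphere, so $-\bfz$ also has the distribution $\mathcal{U}(\mathbb{S}^{d-1})$; hence $z_i$ and $-z_i$ share a distribution, which forces $\mathbb{E}[z_i] = -\mathbb{E}[z_i]$ and so $\mathbb{E}[z_i] = 0$.

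For (2), permutation of coordinates is likewise an isometry, so $\mathbb{E}[z_i^2]$ does not depend on $i$. Combined with the pointwise identity $\|\bfz\|_2^2 = 1$, this yields
\[
1 = \mathbb{E}\bigl[\|\bfz\|_2^2\bigr] = \sum_{j=1}^d \mathbb{E}[z_j^2] = d\,\mathbb{E}[z_i^2],
\]
so $\mathbb{E}[z_i^2] = 1/d$.

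Part (3) is the real obstacle. The natural tool is the Gaussian representation $\bfz = \bfg/\|\bfg\|_2$ with $\bfg \sim \mathcal{N}(0, I_d)$, under which $z_i^2 = g_i^2/\|\bfg\|_2^2$ follows a $\mathrm{Beta}(1/2, (d-1)/2)$ distribution. The claim $\mathbb{P}[|z_i| \geq 1/\sqrt{d}] \geq 1/2$ is then equivalent to the assertion that the median of $z_i^2$ is at least $\mathbb{E}[z_i^2] = 1/d$, or, equivalently, that the inequality $(d-1)g_1^2 \geq \sum_{j=2}^d g_j^2$ holds with probability at least $1/2$, comparing independent $\chi^2_1$ and $\chi^2_{d-1}$ variates. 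I would try to establish this either by direct analysis of the Beta density $f(t)\propto t^{-1/2}(1-t)^{(d-3)/2}$ on $(0,1)$, splitting the integral at $t=1/d$ and comparing the masses on either side, or via a Paley--Zygmund-type argument that exploits the fourth moment $\mathbb{E}[z_i^4] = 3/(d(d+2))$ alongside the second moment from (2).

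The principal difficulty is that (3) is not a symmetry or normalization identity like (1) and (2) but a genuine quantitative comparison between the mean and the median of $z_i^2$. Markov and Chebyshev give only trivial bounds here, and a Paley--Zygmund bound with threshold equal to the mean is vacuous, so the proof must engage directly with the shape of the Beta (equivalently, $\chi^2$-ratio) distribution rather than rely on abstract moment inequalities.
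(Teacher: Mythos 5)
Parts (1) and (2) of your proposal are correct, and your argument for (2) --- coordinate exchangeability plus $\sum_j \mathbb{E}[z_j^2]=\mathbb{E}\bigl[\|\bfz\|_2^2\bigr]=1$ --- is cleaner and more elementary than the paper's route, which computes the marginal law of $z_1^2$ via hyperspherical cap areas and identifies $1-z_1^2$ as a $\mathrm{Beta}\bigl(\tfrac{d-1}{2},\tfrac12\bigr)$ variable.

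Part (3), however, is a genuine gap: you correctly reduce the claim to ``the median of $z_i^2\sim\mathrm{Beta}(1/2,(d-1)/2)$ is at least its mean $1/d$'' and observe that abstract moment inequalities are vacuous here, but you never close the argument --- and no argument can close it, because the claim as stated is false for $d\geq 3$. For $d=3$, Archimedes' hat-box theorem gives that $z_1$ is uniform on $[-1,1]$, so $\mathbb{P}\bigl[|z_1|\geq 1/\sqrt{3}\bigr]=1-1/\sqrt{3}\approx 0.423<1/2$; as $d\to\infty$, $\sqrt{d}\,z_1\Rightarrow \mathcal{N}(0,1)$ and the probability tends to $2(1-\Phi(1))\approx 0.317$. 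Your instinct that the Beta density is right-skewed is exactly the point: for such laws the median of $z_i^2$ sits \emph{below} the mean, which is the wrong direction. The paper's own proof of (3) is not a safe fallback here: it rests on the assertion that $d\mapsto I_{1-1/d}\bigl(\tfrac{d-1}{2},\tfrac12\bigr)$ is increasing, which the same $d=3$ computation refutes ($I_{1/2}(\tfrac12,\tfrac12)=\tfrac12$ but $I_{2/3}(1,\tfrac12)=1-1/\sqrt{3}<\tfrac12$). A correct substitute is the standard bound $\mathbb{P}\bigl[|z_1|\geq c/\sqrt{d}\bigr]\geq 1/2$ for a suitable absolute constant $c<1$ (e.g.\ $c=1/2$ works for all $d$), which is what downstream results such as Lemma~\ref{lemma:FirstCoordSphericalDist} actually need, at the cost only of adjusting constants in the sampling radius.
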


\begin{proof}
{\bf Part 1.} Without loss of generality, we may assume that $i=1$.
 Since the distribution of $z_1$ is symmetric about the origin, it follows that $\mathbb{E}[z_i] = 0$. 
    
{\bf Part 2.} Again, we may assume that $i=1$ without loss of generality. The probability of $z_1 > h$ is proportional to the area of the {\em hyperspherical cap} of height $h$. That is, the area of the portion of $\mathbb{S}^{d-1}$ above the hyperplane with equation $x_1 = h$. From \citep{li2011concise}, we get that:
\begin{align*}
\mathbb{P}[z_1 \geq h] & = \frac{\textnormal{Area hyperspherical cap of height } h}{\textnormal{Area of } \mathbb{S}^{d-1}} \\
    & = \frac{1}{2} I_{1-h^2}\left(\frac{d-1}{2},\frac{1}{2}\right),
\end{align*}
where $I$ represents the regularized, incomplete Beta function. Equivalently, $X = 1 - z_1^{2}$ is a $\mathrm{Beta}\left(\frac{d-1}{2},\frac{1}{2}\right)$ random variable, hence:
\begin{align*}
\mathbb{E}[z_1^{2}] & = 1 - \mathbb{E}[X] = 1 -  \left(\frac{(d-1)/2}{(d-1)/2 + 1/2}\right) 
 = 1 - \frac{d-1}{d} = \frac{1}{d}.
\end{align*} 

{\bf Part 3.} From the above:
\begin{equation*}
\mathbb{P}[z_1 \geq 1/\sqrt{d}] = \frac{1}{2} I_{1-1/d}\left(\frac{d-1}{2},\frac{1}{2}\right).
\end{equation*}
We note, as in \citep{golovin2019gradientless}, that the function $d\to I_{1-1/d}\left(\frac{d-1}{2},\frac{1}{2}\right)$ is increasing. Because:
\begin{equation*}
I_{1-1/2}\left(\frac{2-1}{2},\frac{1}{2}\right) = I_{1/2}\left(\frac{1}{2},\frac{1}{2}\right) = \frac{1}{2},
\end{equation*}
where for the second equality we have used the fact that the distribution $\mathrm{Beta}\left(\frac{1}{2},\frac{1}{2}\right)$ is equal to the arcsine distribution. The claim then follows by symmetry, as:
\begin{align*}
\mathbb{P}\left[|z_1|\geq \frac{1}{\sqrt{d}}\right] = 2\mathbb{P}\left[z_1\geq \frac{1}{\sqrt{d}}\right] &= I_{1-1/d}\left(\frac{d-1}{2},\frac{1}{2}\right) \geq \frac{1}{2}. 
\end{align*}
\end{proof}

\section{A one-bit gradient estimator}
\label{sec:GradEst}

The construction of our gradient estimator was inspired by the observation:
\begin{equation}
\underbrace{\calC_{f}(\bfx,\bfx + r\bfz_i)}_{\RV{=:y_i}} \stackrel{(a)}{\approx} \underbrace{\sign(f(\bfx+r\bfz_i) - f(\bfx))}_{\RV{=:\hat{y}_i}} \stackrel{(b)}{\approx} \underbrace{\sign(\bfz_{i}^{\top}\bfg)}_{\RV{=:\tilde{y}_i}},
\label{eq:BigIdea}
\end{equation}
where $r >0$ and $\bfz_i\in\mathbb{R}^{d}$ is a random perturbation. Thus, one may think of the $y_i = \calC_{f}(\bfx,\bfx + r\bfz_i)$ as {\em approximate one-bit measurements of $\bfg$}. Hence, one may use one-bit compressed sensing, as outlined in Section~\ref{section:One_Bit_Compressed_Sensing}, to recover $\bfg$ from $\bfy = [y_1,\ldots, y_m]^{\top}\in\mathbb{R}^{m}$. We present the resulting gradient estimation algorithm as Algorithm~\ref{alg:1BitGradEstimator}. Analysing the accuracy of Algorithm~\ref{alg:1BitGradEstimator} requires quantifying the approximations (a) and (b) in \eqref{eq:BigIdea}, which we do in Section \ref{sec:MeasurementError}. With this in hand, we are able to use the results of Section \ref{section:One_Bit_Compressed_Sensing} to quantify the approximation error: $\left\| \hat{\bfg} - \left(\bfg/\|\bfg\|_2\right)\right\|_{2}$. We present this result in Section \ref{sec:ReconstructionError}.

\begin{algorithm}
\caption{1BitGradEst} \label{alg:1BitGradEstimator}
\begin{algorithmic}[1]
    \STATE {\bf Inputs:} $\bfx$: Current point, $s$: target sparsity, $m$: number of queries, $r$: sampling radius
    \STATE Generate $\bfz_1,\ldots,\bfz_m \sim \mathcal{U}(\mathbb{S}^{d-1})$. \\
    \STATE $y_i \gets \calC_{f}(\bfx,\bfx+r\bfz_i)$ for $i=1,\ldots, m$
    \STATE Solve the convex program:
    \begin{equation}
    \hat{\bfg} \gets \argmax_{\|\bfg'\|_1 \leq \sqrt{s} \textnormal{ and } \|\bfg'\|_{2} \leq 1}\sum\nolimits_{i=1}^{m}y_i\bfz_{i}^{\top}\bfg' 
    \label{eq:QuadProg}
    \end{equation}
    %\STATE Solve the quadratic program:
    %\vspace{-0.1in}
    %\begin{equation*}
    %\hat{\bfg} = \argmax \sum\nolimits_{i=1}^{m}y_i\bfz_i^{\top}\bfg' \quad \textnormal{ subject to } \|\bfg'\|_1 \leq \sqrt{s} \textnormal{ and } \|\bfg'\| \leq 1    \label{eq:QuadProg}
    %\end{equation*}
    %\vspace{-0.2in}
    \STATE {\bf Output:} $\hat{\bfg}$
\end{algorithmic}
\end{algorithm}

\subsection{Quantifying the error in measurement}\label{sec:MeasurementError}
%The goal of this section is to quantify the probability that $\calC_{f}(\bfx,\bfx + r\bfz_i) = \sign\left(\bfz_i^{\top}\mathbf{g}\right)$. We follow the approach suggested by \eqref{eq:BigIdea}. 
%Recalling $y_i := \calC_{f}(\bfx,\bfx + r\bfz_i)$, define $\hat{y}_i := \sign(f(\bfx+r\bfz_i) - f(\bfx))$ and $\tilde{y}_i := \sign(\bfz_i^{\top}\bfg)$.
Recall $y_i$, $\hat{y}_i$, and $\tilde{y}_i$ from \eqref{eq:BigIdea}. The goal of this section is to quantify the probability of $y_i=\tilde{y}_i$.
We have the following lemma relating them. %$y_i,\hat{y}_i$ and $\tilde{y}_i$.

\begin{lemma}
$\mathbb{P}[y_i = \tilde{y}_i] = \mathbb{P}[y_i = \hat{y}_i \text{ and } \hat{y}_i = \tilde{y}_i] + \mathbb{P}[y_i = -\hat{y}_i \text{ and } \hat{y}_i = -\tilde{y}_i]$ and similarly, for any event $\mathcal{E}$,  $\mathbb{P}[y_i = \tilde{y}_i |\mathcal{E}] = \mathbb{P}[y_i = \hat{y}_i \text{ and } \hat{y}_i = \tilde{y}_i|\mathcal{E}] + \mathbb{P}[y_i = -\hat{y}_i \text{ and } \hat{y}_i = -\tilde{y}_i|\mathcal{E}]$. 
\label{lemma:Mutually_Exclusive_Events}
\end{lemma}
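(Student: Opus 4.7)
The plan is to observe that $y_i$, $\hat{y}_i$, $\tilde{y}_i$ all take values in $\{-1,+1\}$, so any two of them are either equal or opposite in sign. This lets us decompose the event $\{y_i = \tilde y_i\}$ by partitioning according to the value of $\hat{y}_i$.

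Concretely, I would start by writing the trivial identity
\[
\{y_i = \tilde y_i\} = \{y_i = \tilde y_i\} \cap \bigl(\{\hat{y}_i = \tilde y_i\} \cup \{\hat{y}_i = -\tilde y_i\}\bigr),
\]
since $\{\hat{y}_i = \tilde y_i\}$ and $\{\hat{y}_i = -\tilde y_i\}$ partition the sample space (their union is the whole space and they are disjoint, because $\hat{y}_i,\tilde y_i\in\{-1,+1\}$). Distributing yields
\[
\{y_i = \tilde y_i\} = \bigl(\{y_i = \tilde y_i\}\cap\{\hat{y}_i = \tilde y_i\}\bigr)\ \cup\ \bigl(\{y_i = \tilde y_i\}\cap\{\hat{y}_i = -\tilde y_i\}\bigr),
\]
and these two events are disjoint.

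Next I would simplify each piece. On $\{\hat{y}_i = \tilde y_i\}$, the condition $y_i = \tilde y_i$ is equivalent to $y_i = \hat{y}_i$, so the first event equals $\{y_i = \hat{y}_i\text{ and } \hat{y}_i = \tilde y_i\}$. On $\{\hat{y}_i = -\tilde y_i\}$, the condition $y_i = \tilde y_i$ is equivalent to $y_i = -\hat{y}_i$, so the second event equals $\{y_i = -\hat{y}_i\text{ and } \hat{y}_i = -\tilde y_i\}$. Taking probabilities of the disjoint union gives the claimed identity. The conditional version is identical: for any event $\mathcal{E}$, $\mathbb{P}[\,\cdot\mid\mathcal{E}\,]$ is itself a probability measure (when $\mathbb{P}[\mathcal{E}]>0$), so the same set-theoretic decomposition applies verbatim.

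There is no real obstacle here — it is essentially a case analysis over the three signs, and the only thing to be careful about is to verify that the two surviving cases really are disjoint and really do exhaust $\{y_i = \tilde y_i\}$, which follows automatically because all three variables are $\pm 1$-valued.
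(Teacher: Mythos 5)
Your proof is correct and follows essentially the same route as the paper's: a case analysis over the sign of $\hat{y}_i$ relative to $\tilde{y}_i$, using that all three variables are $\pm 1$-valued to see the two cases are exhaustive and mutually exclusive. Your version is slightly more explicit about the partition, but the content is identical.
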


\begin{proof}
Clearly, $y_i = \tilde{y}_i$ if $y_i = \hat{y}_i$ and $\hat{y}_i = \tilde{y}_i$, but we also have $y_i = \tilde{y}_i$ when $y_i = -\hat{y}_i$ and $\hat{y}_i = -\tilde{y}_i$ (these are the only possibilities as $y_i,\hat{y}_i$ and $\tilde{y}_i$ are binary random variables). These events are mutually exclusive, so $\mathbb{P}[y_i = \tilde{y}_i] = \mathbb{P}[y_i = \hat{y}_i \text{ and } \hat{y}_i = \tilde{y}_i] + \mathbb{P}[y_i = -\hat{y}_i \text{ and } \hat{y}_i = -\tilde{y}_i]$ as claimed. The proof for the case conditioned on $\mathcal{E}$ is similar.
\end{proof}
Our strategy is to define an event $\mathcal{B}$ such that $\mathbb{P}[y_i= \tilde{y}_i|\mathcal{B}] \geq 0.5 + \gamma$ and $\mathbb{P}[y_i = \tilde{y}_i|\mathcal{B}^{c}] \geq 0.5$, where $\gamma > 0$ is a small constant. We then use
\begin{equation}
    \begin{split}
    \mathbb{P}\left[y_i = \tilde{y}_i\right] & = \mathbb{P}\left[y_i = \tilde{y}_i\middle|\mathcal{B} \right]\mathbb{P}[\mathcal{B}] 
+ \mathbb{P}\left[y_i =\tilde{y}_i \middle|~ \mathcal{B}^{c} \right]\left(1 - \mathbb{P}[\mathcal{B}]\right) \\
    &\geq \left(0.5 +\gamma\right)\mathbb{P}[\mathcal{B}] + 0.5\left(1 - \mathbb{P}[\mathcal{B}]\right) \\
    & = 0.5 + \gamma \mathbb{P}[\mathcal{B}]. 
\end{split}
\label{eq:outlineStrategy}
\end{equation}
We begin with Taylor's theorem:
\begin{equation}
f(\bfx + r\bfz_{i}) - f(\bfx) = r \bfz_{i}^{\top}\bfg + \frac{1}{2}r^{2}\bfz_{i}^{\top}\nabla^{2}f(\bfx+t_0\bfz_{i})\bfz_{i} \quad \text{ for some } t_0 \in (0,1)
\label{eq:Taylor_Expansion}
\end{equation}
Let $e_i = e_i(\bfx,t_0;\bfz_i):= \bfz_i^{\top}\nabla^{2}f(\bfx+t_0\bfz_i)\bfz_i$. If $f$ is convex then $\nabla^{2}f(\bfx+t_0\bfz_i)$ is positive semi-definite and hence $e_i \geq 0$. 

\begin{lemma} \label{lemma:FirstCoordSphericalDist}
Suppose that $\bfz_i\sim \mathcal{U}(\mathbb{S}^{d-1})$, then
$\mathbb{P}\left[\left|\bfz_i^{\top}\bfg\right| \geq \|\bfg\|_2/\sqrt{d}\right] \geq \frac{1}{2} $.
\end{lemma}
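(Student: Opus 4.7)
The plan is to reduce the claim to Part 3 of Theorem~\ref{thm:HighDimSphereTheorem} by exploiting the rotational invariance of the uniform distribution on $\mathbb{S}^{d-1}$.

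First, I would handle the degenerate case: if $\bfg = 0$ then the inequality $|\bfz_i^{\top}\bfg| \geq \|\bfg\|_2/\sqrt{d}$ reduces to $0 \geq 0$, which holds with probability $1 \geq 1/2$. So assume $\bfg \neq 0$ and set $\bfu := \bfg/\|\bfg\|_2 \in \mathbb{S}^{d-1}$. Then
\begin{equation*}
|\bfz_i^{\top}\bfg| \geq \|\bfg\|_2/\sqrt{d} \quad \Longleftrightarrow \quad |\bfz_i^{\top}\bfu| \geq 1/\sqrt{d}.
\end{equation*}

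Next, I would use rotational invariance. Let $Q \in \mathbb{R}^{d\times d}$ be any orthogonal matrix with $Q\bfu = \bfe_1$, the first standard basis vector. Since $\mathcal{U}(\mathbb{S}^{d-1})$ is invariant under orthogonal transformations, the random vector $\bfz_i' := Q\bfz_i$ is again distributed as $\mathcal{U}(\mathbb{S}^{d-1})$. Moreover,
\begin{equation*}
\bfz_i^{\top}\bfu = \bfz_i^{\top}Q^{\top}\bfe_1 = (Q\bfz_i)^{\top}\bfe_1 = (\bfz_i')_1,
\end{equation*}
so $|\bfz_i^{\top}\bfu|$ has the same distribution as $|(\bfz_i')_1|$, i.e., the absolute value of the first coordinate of a uniform sample from the sphere.

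Finally, applying Part~3 of Theorem~\ref{thm:HighDimSphereTheorem} to $\bfz_i'$ gives $\mathbb{P}[|(\bfz_i')_1| \geq 1/\sqrt{d}] \geq 1/2$, which completes the proof. There is no real obstacle here — the lemma is essentially a restatement of the third part of the high-dimensional sphere theorem, with rotational invariance used to align the unknown direction $\bfg$ with a coordinate axis.
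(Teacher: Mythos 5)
Your proof is correct and follows essentially the same route as the paper's: the paper simply writes ``without loss of generality, assume $\bfg = \mathbf{c}_1\|\bfg\|_2$'' and invokes Part~3 of Theorem~\ref{thm:HighDimSphereTheorem}, whereas you spell out the rotational-invariance justification for that reduction and also handle the degenerate case $\bfg = 0$. No gap; your version is just more explicit.
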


\begin{proof}
Without loss of generality, assume $\bfg = \mathbf{c}_1\|\bfg\|_2 $, where $\mathbf{c}_1$ is the first canonical basis vector. Then:
\begin{align*}
\mathbb{P}\left[\left|\bfz_i^{\top}\bfg\right| \geq \|\bfg\|_2/\sqrt{d}\right] = \mathbb{P}\left[\left|\bfz^{\top}_{i}\mathbf{c}_1\right| \geq 1/\sqrt{d}\right] 
= \mathbb{P}[\left|z_{i,1}\right| \geq 1/\sqrt{d}] \geq \frac{1}{2},
\end{align*}
where the final inequality is from Theorem~\ref{thm:HighDimSphereTheorem} Part 3.
\end{proof}

% Define $\tilde{y}_i := \sign(\bfz_i^{\top}\bfg)$ and $\hat{y}_i := \sign(f(x+r\bfz_i) - f(x))$. Then (a) in \eqref{eq:BigIdea} corresponds to quantifying when $y_i = \tilde{y}_i$ while (b) corresponds to quantifying when $\tilde{y}_i = \hat{y}_i$. All three are binary random variables. Interestingly:

% \begin{lemma}
% With $y_i,\tilde{y}_i$ and $\hat{y}_i$ as above:
% \begin{equation}
%     \mathbb{P}[y_i = \tilde{y}_i] \geq \mathbb{P}[y_i = \hat{y}_i \text{ and } \hat{y}_i = \tilde{y}_i]
% \end{equation}
% \end{lemma}

% \begin{proof}
% Observe that $y_i = \tilde{y}_i$ if $y_i = \hat{y}_i$ and $\hat{y}_i = \tilde{y}_i$ or if $y_i = -\hat{y}_i$ and $\hat{y}_i = -\tilde{y}_i$ and these events are mutually exclusive. Hence:
% \begin{align*}
%   \mathbb{P}[y_i = \hat{y}_i] & = \mathbb{P}\left[ y_i = \tilde{y}_i \text{ and }\tilde{y}_i = \hat{y}_i \right] + \mathbb{P}\left[y_i = -\tilde{y}_i \text{ and } \tilde{y}_i = -\hat{y}_i \right] \\
%   & \geq \mathbb{P}\left[ y_i = \tilde{y}_i \text{ and }\tilde{y}_i = \hat{y}_i \right].
% \end{align*}
% \end{proof}
%Hence it suffices to lower bound $\mathbb{P}[y_i = \tilde{y}_i \text{ and } \tilde{y}_i = \hat{y}_i]$. We begin by analyzing when $\tilde{y}_i = \hat{y}_i$. 
\begin{lemma}
For any $a,b\in\mathbb{R}\setminus\{0\}$, if $|a| - |b| >0$ then $\sign(a) = \sign(a+b)$
\label{lemma:ab_lemma}
\end{lemma}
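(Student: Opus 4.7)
The plan is to do a short case analysis on the sign of $a$, observing that the hypothesis $|a| > |b|$ forces $a+b$ to retain the sign of $a$ because $b$ is too small in magnitude to flip it. This is essentially a packaging of the reverse triangle inequality for a two-term sum, so the entire argument should fit in a few lines with no real technical obstacle.

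Concretely, first I would note that $|a| > |b| \geq 0$ implies $a \neq 0$, so $\sign(a) \in \{-1,+1\}$ is well defined. Then split into two cases. If $a > 0$, write $a + b \geq a - |b| = |a| - |b| > 0$, so $\sign(a+b) = 1 = \sign(a)$. If $a < 0$, write $a + b \leq a + |b| = -(|a| - |b|) < 0$, so $\sign(a+b) = -1 = \sign(a)$. In both cases the conclusion holds.

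An equivalent one-line alternative would be to invoke the reverse triangle inequality $|a+b| \geq |a| - |b| > 0$ to conclude $a + b \neq 0$, and then argue that since $|b| < |a|$, the perturbation $b$ cannot carry $a$ across zero. Either packaging works; I would prefer the explicit case split since it makes the paper self-contained and avoids quoting an auxiliary inequality. The main (and only) ``obstacle'' is simply being careful that the hypothesis $|a| - |b| > 0$ is used in the strict form so that $\sign(a+b)$ is unambiguous; no further delicate step is needed.
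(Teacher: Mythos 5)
Your proof is correct and takes essentially the same elementary route as the paper: a short case analysis showing that $b$ is too small in magnitude to change the sign of $a$. Your version splits only on the sign of $a$ (using $a+b \geq |a|-|b|$ or $a+b \leq -(|a|-|b|)$), whereas the paper splits on the signs of both $a$ and $b$; the difference is purely cosmetic.
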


\begin{proof}
If $a,b>0$ or $a,b < 0$ there is nothing to prove. So, suppose $a>0$ while $b<0$. Here, $\sign(a+b) = \sign(|a| - |b|) = +1 = \sign(a)$ using the assumption $|a| - |b| >0$. The case $a<0$, $b >0$ is similar.
\end{proof}

\begin{lemma}
Suppose $f\in \mathcal{F}_{L,\nu,d}$ and $\bfz_i\sim \mathcal{U}(\mathbb{S}^{d-1})$. \RV{Fix any $\varepsilon >0$}, set $r = \varepsilon\nu/(2L\sqrt{d})$ and suppose $\|\bfg\|_2 \geq \varepsilon\nu/2$. Define the event $\mathcal{B} := \{\bfz_i: \ |\bfz_i^{\top}\mathbf{g}| \geq \frac{\varepsilon\nu}{2\sqrt{d}}\}$. Then $\mathbb{P}[\mathcal{B}] \geq 0.5$ and $\mathbb{P}[\hat{y}_i = \tilde{y}_i|\mathcal{B}] = 1$.
\label{lemma:hat_y=tilde_y}
\end{lemma}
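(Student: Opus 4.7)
The plan is to verify the two claims separately, leaning on Lemma~\ref{lemma:FirstCoordSphericalDist} for the probability bound on $\mathcal{B}$ and on the Taylor expansion \eqref{eq:Taylor_Expansion} together with Lemma~\ref{lemma:ab_lemma} for the conditional agreement of $\hat{y}_i$ and $\tilde{y}_i$.

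For the bound $\mathbb{P}[\mathcal{B}] \geq 0.5$, I would observe that since $\|\bfg\|_2 \geq \varepsilon\nu/2$, the threshold $\varepsilon\nu/(2\sqrt{d})$ in the definition of $\mathcal{B}$ is at most $\|\bfg\|_2/\sqrt{d}$. Hence the event $\{|\bfz_i^\top \bfg| \geq \|\bfg\|_2/\sqrt{d}\}$ is contained in $\mathcal{B}$, and Lemma~\ref{lemma:FirstCoordSphericalDist} immediately yields $\mathbb{P}[\mathcal{B}] \geq \mathbb{P}[|\bfz_i^\top \bfg| \geq \|\bfg\|_2/\sqrt{d}] \geq 1/2$.

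For the conditional claim $\mathbb{P}[\hat{y}_i = \tilde{y}_i \mid \mathcal{B}] = 1$, I would start from the Taylor expansion \eqref{eq:Taylor_Expansion}, from which $\hat{y}_i = \sign\!\left(r\bfz_i^\top \bfg + \tfrac{1}{2}r^2 e_i\right) = \sign\!\left(\bfz_i^\top \bfg + \tfrac{r}{2} e_i\right)$ after dividing by $r>0$. To apply Lemma~\ref{lemma:ab_lemma} with $a = \bfz_i^\top \bfg$ and $b = (r/2)e_i$, I need $|a| - |b| > 0$. Since $f\in \mathcal{F}_{L,\nu,d}$, $\|\nabla^2 f\|_2 \leq L$, and because $\|\bfz_i\|_2 = 1$, we have $|e_i| = |\bfz_i^\top \nabla^2 f(\bfx + t_0 \bfz_i) \bfz_i| \leq L$. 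Substituting the chosen radius $r = \varepsilon\nu/(2L\sqrt{d})$ gives $|b| \leq \tfrac{r L}{2} = \tfrac{\varepsilon\nu}{4\sqrt{d}}$. On the event $\mathcal{B}$, $|a| \geq \tfrac{\varepsilon\nu}{2\sqrt{d}}$, so $|a| - |b| \geq \tfrac{\varepsilon\nu}{4\sqrt{d}} > 0$. Lemma~\ref{lemma:ab_lemma} then gives $\sign(a+b) = \sign(a)$, i.e.\ $\hat{y}_i = \tilde{y}_i$, deterministically on $\mathcal{B}$.

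The only mildly subtle step is the uniform Hessian bound $|e_i|\leq L$; everything else is a careful accounting of how the sampling radius $r$ was tuned so that the quadratic Taylor remainder is smaller than the linear term on $\mathcal{B}$. No delicate probabilistic argument is needed beyond citing Lemma~\ref{lemma:FirstCoordSphericalDist}, so I expect the proof to be short and essentially algebraic.
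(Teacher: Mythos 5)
Your proof is correct and follows essentially the same route as the paper: the probability bound via event containment and Lemma~\ref{lemma:FirstCoordSphericalDist}, and the conditional claim via the Taylor expansion, the uniform Hessian bound $|e_i|\leq L$, and Lemma~\ref{lemma:ab_lemma}. The only cosmetic difference is that you divide through by $r$ before applying Lemma~\ref{lemma:ab_lemma}, whereas the paper keeps $a=r\bfz_i^{\top}\bfg$ and $b=r^2 e_i/2$; the arithmetic is the same.
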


\begin{proof}

From Lemma~\ref{lemma:FirstCoordSphericalDist} if $\|\bfg\|_2 \geq \varepsilon\nu/2$ then
\begin{equation}
    \mathbb{P}[\mathcal{B}] := \mathbb{P}\left[|\bfz_i^{\top}\mathbf{g}| \geq \frac{\varepsilon\nu}{2\sqrt{d}}\right] \geq \mathbb{P}\left[|\bfz_i^{\top}\mathbf{g}| \geq \frac{\|\bfg\|_2}{\sqrt{d}}\right] \geq \frac{1}{2}.
\end{equation}
$f$ is $L$-Lipschitz differentiable (as $f\in \mathcal{F}_{L,\nu,d}$), hence $\RV{|e_i|}\leq \|\nabla^{2}f(\bfx)\|_2 \leq L$. So, if $\mathcal{B}$ occurs and $r$, $\|\bfg\|_2$ are as stated then
\begin{equation}
   r|\bfz_i^{\top}\mathbf{g}| - \frac{r^2}{2}|e_i| \geq \left(\frac{\varepsilon\nu}{2L\sqrt{d}}\right)\left(\frac{\varepsilon\nu}{2\sqrt{d}}\right) - \frac{\varepsilon^2\nu^2}{4L^2d}\left(\frac{L}{2}\right) = \frac{\varepsilon^2\nu^2}{8Ld} > 0.
\end{equation}
From \eqref{eq:Taylor_Expansion} 
\begin{equation}
    \hat{y}_i := \sign(f(\bfx+r\bfz_i) - f(\bfx)) = \sign\left(r\bfz_i^{\top}\mathbf{g} +\frac{r^2}{2}e_i \right) .
    \label{eq:hat_tilde_1}
\end{equation}
Applying Lemma~\ref{lemma:ab_lemma} with $a= r\bfz_i^{\top}\mathbf{g}$ and $b=r^2e_i/2$ as $r|\bfz_i^{\top}\mathbf{g}| - r^2|e_i|/2 >0$ we have:
\begin{equation}
    \sign\left(r\bfz_i^{\top}\mathbf{g} +\frac{r^2}{2}e_i \right) = \sign\left(r\bfz_i^{\top}\mathbf{g}\right) =: \tilde{y}_i
    \label{eq:hat_tilde_2}
\end{equation}
and so (combining \eqref{eq:hat_tilde_1} and \eqref{eq:hat_tilde_2}) $\hat{y}_i = \tilde{y}_i$.
\end{proof}

\begin{lemma}
Suppose $f\in \mathcal{F}_{L,\nu,d}$ and the comparison oracle $\calC_{f}(\cdot,\cdot)$ satisfies Assumption~\ref{assumption:Oracle}. If $\mathcal{B}$ is as in Lemma~\ref{lemma:hat_y=tilde_y}, then $\mathbb{P}[y_i = \tilde{y}_i|\mathcal{B}] \geq 0.5 + \delta_0$.
\label{thm:Conditional_on_A}
\end{lemma}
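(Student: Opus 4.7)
The plan is to decompose $\mathbb{P}[y_i = \tilde{y}_i \mid \mathcal{B}]$ via Lemma~\ref{lemma:Mutually_Exclusive_Events}, use Lemma~\ref{lemma:hat_y=tilde_y} to kill one of the two terms, and then invoke the oracle guarantee (Assumption~\ref{assumption:Oracle} with $\kappa=1$) to lower-bound what remains.

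More concretely, first apply Lemma~\ref{lemma:Mutually_Exclusive_Events} with $\mathcal{E} = \mathcal{B}$ to write
\[
\mathbb{P}[y_i = \tilde{y}_i \mid \mathcal{B}] = \mathbb{P}[y_i = \hat{y}_i \text{ and } \hat{y}_i = \tilde{y}_i \mid \mathcal{B}] + \mathbb{P}[y_i = -\hat{y}_i \text{ and } \hat{y}_i = -\tilde{y}_i \mid \mathcal{B}].
\]
By Lemma~\ref{lemma:hat_y=tilde_y}, conditional on $\mathcal{B}$ we have $\hat{y}_i = \tilde{y}_i$ with probability one, so the event $\{\hat{y}_i = -\tilde{y}_i\}$ has conditional probability zero and the second summand vanishes. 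The first summand then collapses to $\mathbb{P}[y_i = \hat{y}_i \mid \mathcal{B}]$, so the task reduces to showing $\mathbb{P}[y_i = \hat{y}_i \mid \mathcal{B}] \geq 0.5 + \delta_0$.

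Next, I would argue that this lower bound is a direct consequence of Assumption~\ref{assumption:Oracle}. By definition, $\hat{y}_i = \sign(f(\bfx + r\bfz_i) - f(\bfx))$, so the event $\{y_i = \hat{y}_i\}$ is precisely the event that the comparison oracle returns the correct sign for the query pair $(\bfx, \bfx + r\bfz_i)$. With $\kappa = 1$ in Definition~\ref{def:oracle 2}, the factor $|f(\bfx+r\bfz_i) - f(\bfx)|^{\kappa - 1}$ equals $1$, so the oracle guarantee simplifies to
\[
\mathbb{P}\bigl[y_i = \hat{y}_i \,\big|\, \bfz_i\bigr] \geq \tfrac{1}{2} + \min\{\delta_0, \mu\} = \tfrac{1}{2} + \delta_0,
\]
where the final equality uses $\delta_0 < \mu$ from Assumption~\ref{assumption:Oracle}. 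Crucially, this bound holds almost surely conditional on $\bfz_i$, because the oracle's internal coin-flip randomness is independent of the query direction $\bfz_i$.

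Since $\mathcal{B}$ is $\sigma(\bfz_i)$-measurable, the tower property then gives
\[
\mathbb{P}[y_i = \hat{y}_i \mid \mathcal{B}] = \mathbb{E}\bigl[\mathbb{P}[y_i = \hat{y}_i \mid \bfz_i] \,\big|\, \mathcal{B}\bigr] \geq \tfrac{1}{2} + \delta_0,
\]
which combined with the earlier reduction yields the claim. The only subtle point, which I would flag explicitly in the write-up, is the independence of the oracle noise from $\bfz_i$ needed to push the oracle bound through the conditioning on $\mathcal{B}$; the rest is bookkeeping.
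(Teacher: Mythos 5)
Your proposal is correct and follows essentially the same route as the paper: decompose via Lemma~\ref{lemma:Mutually_Exclusive_Events}, use $\mathbb{P}[\hat{y}_i=\tilde{y}_i\mid\mathcal{B}]=1$ from Lemma~\ref{lemma:hat_y=tilde_y}, and exploit the fact that for $\kappa=1$ the oracle's correctness probability is independent of $\bfz_i$ (hence of $\mathcal{B}$). The only cosmetic difference is that you collapse the joint event directly using the almost-sure equality, whereas the paper drops the second summand by inequality and then factors the first via independence of the two events — both are fine.
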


\begin{proof}
When $\kappa=1$ the accuracy of a polynomial comparison oracle is independent of $\left|f(\bfx + r\bfz_{i}) - f(\bfx)\right|$ and so is independent of $\bfz_i$. Thus $\mathbb{P}[y_i = \hat{y}_i|\mathcal{B}] = \mathbb{P}[y_i = \hat{y}_i] = 0.5 + \delta_0$. Appealing to Lemma~\ref{lemma:Mutually_Exclusive_Events}:
\begin{equation*}
    \mathbb{P}[y_i = \tilde{y}_i|\mathcal{B}] \geq \mathbb{P}\left[y_i = \hat{y}_i \text{ and } \hat{y}_i = \tilde{y}_i|\mathcal{B}\right]. %= \mathbb{P}[y_i = \hat{y}_i|\mathcal{B}] = 0.5 + \delta_0 .
\end{equation*}
The event $\hat{y}_i = \tilde{y}_i$ depends on $\bfz_i$ only whereas the event $y_i = \hat{y}_i$ depends on the randomness inherent in the oracle only. Hence, these two events are independent, even when conditioned on $\mathcal{B}$. So, 
\begin{equation}
    \mathbb{P}\left[y_i = \hat{y}_i \text{ and } \hat{y}_i = \tilde{y}_i|\mathcal{B}\right] = \mathbb{P}\left[y_i = \hat{y}_i |\mathcal{B}\right]\mathbb{P}\left[\hat{y}_i = \tilde{y}_i|\mathcal{B}\right] \geq 0.5 + \delta_0,
\end{equation}
where $\mathbb{P}\left[\hat{y}_i = \tilde{y}_i|\mathcal{B}\right] = 1$ is from Lemma~\ref{lemma:hat_y=tilde_y}
\end{proof}
We now focus our attention on the event $\mathcal{B}^{c}$. 

\begin{lemma}
Suppose $f\in \mathcal{F}_{L,\nu,d}$ and the comparison oracle $\calC_{f}(\cdot,\cdot)$ satisfies Assumption~\ref{assumption:Oracle}. If $\mathcal{B}$ is as in Lemma~\ref{lemma:hat_y=tilde_y}, then $\mathbb{P}\left[y_i = \tilde{y}_i|\mathcal{B}^{c}\right] \geq 0.5$.
\label{lemma:LowerBound_on_A_c}
\end{lemma}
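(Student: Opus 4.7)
The plan is to mirror the structure of Lemma~\ref{thm:Conditional_on_A}, but on the event $\mathcal{B}^c$ we can no longer force $\hat{y}_i = \tilde{y}_i$ deterministically (since $|\bfz_i^\top\bfg|$ may be too small to dominate the Hessian term). Instead, I would show that $\mathbb{P}[\hat{y}_i = \tilde{y}_i \mid \mathcal{B}^c] \geq 1/2$ using convexity together with the rotational symmetry of $\mathcal{U}(\mathbb{S}^{d-1})$, then combine this with the oracle-accuracy bound through a short algebraic identity.

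First, I would apply Lemma~\ref{lemma:Mutually_Exclusive_Events} with $\mathcal{E} = \mathcal{B}^c$, and use that (for $\kappa=1$) the oracle's internal randomness is independent of $\bfz_i$ exactly as in Lemma~\ref{thm:Conditional_on_A}. Setting $p := \mathbb{P}[y_i = \hat{y}_i \mid \mathcal{B}^c]$ and $q := \mathbb{P}[\hat{y}_i = \tilde{y}_i \mid \mathcal{B}^c]$, this yields
\begin{equation*}
\mathbb{P}[y_i = \tilde{y}_i \mid \mathcal{B}^c] \;=\; pq + (1-p)(1-q) \;=\; \tfrac{1}{2} + \tfrac{1}{2}(2p-1)(2q-1),
\end{equation*}
the last equality being direct expansion. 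It therefore suffices to verify $p \geq 1/2$ and $q \geq 1/2$, since the sign of the correction term is then non-negative.

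The bound $p \geq 1/2 + \delta_0$ follows immediately from Assumption~\ref{assumption:Oracle}: with $\kappa=1$ the oracle is correct with probability at least $1/2+\delta_0$ regardless of the function gap, hence irrespective of whether $\bfz_i \in \mathcal{B}^c$. For $q$, recall from \eqref{eq:Taylor_Expansion} that $\hat{y}_i = \sign\bigl(r\bfz_i^\top \bfg + \tfrac{r^2}{2}e_i\bigr)$ with $e_i = \bfz_i^\top \nabla^2 f(\bfx + t_0 \bfz_i)\bfz_i \geq 0$ by convexity of $f$. On the sub-event $\{\bfz_i^\top \bfg > 0\}$, both summands are non-negative and the first is strictly positive, which forces $\hat{y}_i = +1 = \tilde{y}_i$; thus $q \geq \mathbb{P}[\bfz_i^\top \bfg > 0 \mid \mathcal{B}^c]$. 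Because $\mathcal{U}(\mathbb{S}^{d-1})$ is invariant under the reflection $\bfz_i \mapsto -\bfz_i$ and the event $\mathcal{B}^c = \{|\bfz_i^\top\bfg| < \varepsilon\nu/(2\sqrt{d})\}$ is itself invariant under the same reflection, this conditional probability equals $1/2$, whence $q \geq 1/2$.

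The only non-routine ingredient is the last step: recognising that convexity (which makes $e_i \geq 0$) eliminates half of the would-be sign flips at no cost, while the symmetry of both $\mathcal{U}(\mathbb{S}^{d-1})$ and $\mathcal{B}^c$ under $\bfz_i \mapsto -\bfz_i$ pins the favourable half to exactly probability $1/2$. Everything else is Lemma~\ref{lemma:Mutually_Exclusive_Events}, the independence argument from Lemma~\ref{thm:Conditional_on_A}, and the one-line algebraic identity above.
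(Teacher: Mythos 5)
Your proof is correct and follows essentially the same route as the paper: decompose via Lemma~\ref{lemma:Mutually_Exclusive_Events}, use independence of the oracle noise from $\bfz_i$, obtain $\mathbb{P}[\hat{y}_i=\tilde{y}_i\mid\mathcal{B}^c]\geq 1/2$ from convexity ($e_i\geq 0$) combined with the reflection symmetry of $\mathcal{U}(\mathbb{S}^{d-1})$ and of $\mathcal{B}^c$, and finish with elementary algebra. The only cosmetic difference is the closing step, where you use the identity $pq+(1-p)(1-q)=\tfrac{1}{2}+\tfrac{1}{2}(2p-1)(2q-1)$ while the paper checks monotonicity of $1+2p_1p_2-p_1-p_2$ via partial derivatives; the two are equivalent.
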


\begin{proof}
As above, the events $\hat{y}_i = \tilde{y}_i$ and $y_i = \hat{y}_i$ are independent conditioned on $\mathcal{B}^c$. From Lemma~\ref{lemma:Mutually_Exclusive_Events}:
\begin{equation}
    \mathbb{P}[y_i = \tilde{y}_i |\mathcal{B}^{c}] = \mathbb{P}[y_i = \hat{y}_i \text{ and } \hat{y}_i = \tilde{y}_i |\mathcal{B}^{c}] + \mathbb{P}[y_i = -\hat{y}_i \text{ and } \hat{y}_i = -\tilde{y}_i|\mathcal{B}^{c}].
    \label{eq:WriteProbAsSum}
\end{equation}
Let $p_{1}:= \mathbb{P}\left[\hat{y}_i = \tilde{y}_i|\mathcal{B}^{c}\right]$ and $p_{2}:= \mathbb{P}\left[y_i = \hat{y}_i|\mathcal{B}^{c}\right]$. As in the proof of Lemma~\ref{thm:Conditional_on_A}, conditioning on $\mathcal{B}^{c}$ has no effect on the event $y_i = \hat{y}_i$ so 
\begin{equation*}
    p_{2} = \mathbb{P}\left[y_i = \hat{y}_i|\mathcal{B}^{c}\right] = \mathbb{P}\left[y_i = \hat{y}_i\right] \geq 0.5 + \delta_0 \geq 0.5.
\end{equation*}
Write $\mathcal{B}^{c}$ as the union of two disjoint sets, {\em i.e.} $\mathcal{B}^{c} = \mathcal{B}_1\cup\mathcal{B}_2$ where
\begin{equation}
    \mathcal{B}_1 = \left\{\bfz_i: -\frac{\varepsilon\nu}{2\sqrt{d}} < \bfz_i^{\top}\mathbf{g} < 0\right\} \quad \text{ and } \quad \mathcal{B}_2 = \left\{\bfz_i: 0 < \bfz_i^{\top}\mathbf{g} < \frac{\varepsilon\nu}{2\sqrt{d}}\right\}.
\end{equation}
By symmetry of the distribution $\mathcal{U}(\mathbb{S}^{d-1})$, $\mathbb{P}[\mathcal{B}_1] = \mathbb{P}[\mathcal{B}_2] = 0.5\mathbb{P}[\mathcal{B}^{c}]$. As $f$ is convex (because $f\in \mathcal{F}_{L,\nu,d}$) $e_i \geq 0$ always. Thus, when $\bfz_i^{\top}\mathbf{g} >0$ ({\em i.e.} when $\mathcal{B}_2$ occurs) we have:
\begin{equation}
    \hat{y}_i = \sign\left(r\bfz_i^{\top}\mathbf{g} +\frac{r^2}{2}e_i \right) = \sign\left( \bfz_i^{\top}\mathbf{g}\right) = \tilde{y}_i
\end{equation}
and so:
\begin{align*}
    p_1 &= \mathbb{P}\left[\hat{y}_i = \tilde{y}_i \middle | \mathcal{B}^{c} \right] = \mathbb{P}\left[\hat{y}_i = \tilde{y}_i\middle | \mathcal{B}_1 \right] \frac{\mathbb{P}[\mathcal{B}_1]}{\mathbb{P}[\mathcal{B}]} +  \mathbb{P}\left[\hat{y}_i = \tilde{y}_i\middle | \mathcal{B}_2 \right] \frac{\mathbb{P}[\mathcal{B}_2]}{\mathbb{P}[\mathcal{B}]} \\
    & \geq \mathbb{P}\left[\hat{y}_i = \tilde{y}_i\middle | \mathcal{B}_2 \right] \frac{\mathbb{P}[\mathcal{B}_2]}{\mathbb{P}[\mathcal{B}]} \\
    & \geq (1)(0.5) = 0.5 .
\end{align*}
Using independence and appealing to equation \eqref{eq:WriteProbAsSum}:
\begin{align*}
\mathbb{P}[y_i = \tilde{y}_i |\mathcal{B}^{c}] = p_1p_2 + (1-p_1)(1-p_2) = 1 + 2p_1p_2 - p_1 - p_2 := f(p_1,p_2) .
\end{align*}
Observe $\partial f/\partial p_{1} = 2p_{2}-1$ and $\partial f/\partial p_{2} = 2p_{1} -1$. Hence $f(p_1,p_2)$ is increasing in $p_{1},p_{2}$ for $p_{1},p_{2} \geq 0.5$ which holds by the above. Moreover $f(0.5,0.5) = 0.5$, whence $\mathbb{P}\left[y_i = \tilde{y}_i|\mathcal{B}^{c}\right] \geq 0.5$.
\end{proof}

\begin{remark}
The independence of the events $\hat{y}_i = \tilde{y}_i$ and $y_i = \hat{y}_i$ is essential in proving Lemma~\ref{lemma:LowerBound_on_A_c}. This independence no longer holds for polynomial comparison oracles with $\kappa >1$. This is the key obstruction to extending our theory to this case.
\end{remark}

\begin{lemma}
\label{lemma:Bound_Prob_of_Failure}
Suppose $f\in \mathcal{F}_{L,\nu,d}$ and the comparison oracle $\calC_{f}(\cdot,\cdot)$ satisfies Assumption~\ref{assumption:Oracle}. \RV{Fix any $\varepsilon >0$}, set $r = \varepsilon\nu/(2L\sqrt{d})$ and suppose $\|\bfg\|_2 \geq \varepsilon\nu/2$. Then
      $\displaystyle \mathbb{P}[y_i = \tilde{y}_i] \geq 0.5 + 0.5\delta_0$. 
\end{lemma}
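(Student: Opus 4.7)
The statement is exactly the payoff of the decomposition strategy outlined in equation \eqref{eq:outlineStrategy}, so the plan is simply to apply the law of total probability to the event $\{y_i = \tilde{y}_i\}$ using the partition $\{\mathcal{B}, \mathcal{B}^{c}\}$, and then plug in the three bounds that the preceding lemmas have already established.

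First I would write
\begin{equation*}
\mathbb{P}[y_i = \tilde{y}_i] \;=\; \mathbb{P}[y_i = \tilde{y}_i \mid \mathcal{B}]\,\mathbb{P}[\mathcal{B}] \;+\; \mathbb{P}[y_i = \tilde{y}_i \mid \mathcal{B}^{c}]\,(1 - \mathbb{P}[\mathcal{B}]).
\end{equation*}
From Lemma~\ref{thm:Conditional_on_A} we have $\mathbb{P}[y_i = \tilde{y}_i \mid \mathcal{B}] \geq 0.5 + \delta_0$, and from Lemma~\ref{lemma:LowerBound_on_A_c} we have $\mathbb{P}[y_i = \tilde{y}_i \mid \mathcal{B}^{c}] \geq 0.5$. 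Since both conditional bounds are at least $0.5$ and the one on $\mathcal{B}$ is strictly larger, the right-hand side is minimized (for fixed $\mathbb{P}[\mathcal{B}]$) by the above substitutions, giving
\begin{equation*}
\mathbb{P}[y_i = \tilde{y}_i] \;\geq\; (0.5 + \delta_0)\,\mathbb{P}[\mathcal{B}] + 0.5\,(1 - \mathbb{P}[\mathcal{B}]) \;=\; 0.5 + \delta_0\,\mathbb{P}[\mathcal{B}].
\end{equation*}

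Finally I would invoke Lemma~\ref{lemma:hat_y=tilde_y}, which guarantees $\mathbb{P}[\mathcal{B}] \geq 0.5$ under the standing assumptions on $r$ and $\|\bfg\|_2$, to conclude $\mathbb{P}[y_i = \tilde{y}_i] \geq 0.5 + 0.5\delta_0$, as required. There is no real obstacle here: all of the genuine work (the Taylor-expansion control producing $\hat y_i = \tilde y_i$ on $\mathcal{B}$, the spherical concentration giving $\mathbb{P}[\mathcal{B}] \geq 1/2$, the oracle-randomness independence argument on $\mathcal{B}^{c}$) has already been carried out in Lemmas~\ref{lemma:hat_y=tilde_y}--\ref{lemma:LowerBound_on_A_c}; this lemma is the one-line assembly step.
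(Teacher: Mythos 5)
Your proposal is correct and matches the paper's own proof of this lemma essentially verbatim: both apply the total-probability decomposition of \eqref{eq:outlineStrategy} over $\{\mathcal{B},\mathcal{B}^{c}\}$, insert the bounds from Lemmas~\ref{thm:Conditional_on_A} and \ref{lemma:LowerBound_on_A_c}, and finish with $\mathbb{P}[\mathcal{B}]\geq 0.5$ from Lemma~\ref{lemma:hat_y=tilde_y}. No issues.
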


\begin{proof}
We use the strategy outlined at the beginning of this sub-section with $\mathcal{B} := \{\bfz_i: \ |\bfz_i^{\top}\mathbf{g}| \geq \frac{\varepsilon\nu}{2\sqrt{d}}\}$. From Lemma~\ref{thm:Conditional_on_A} we have $\mathbb{P}[y_i = \tilde{y}_i|\mathcal{B}] \geq 0.5 + \delta_0$. Because $\mathbb{P}[y_i = \tilde{y}_i|\mathcal{B}^{c}] \geq 0.5$ (by Lemma~\ref{lemma:LowerBound_on_A_c}) from \eqref{eq:outlineStrategy} we obtain:
\begin{equation}
    \mathbb{P}[y_i = \tilde{y}_i] \geq 0.5 + \delta_0\mathbb{P}[\mathcal{B}].
\end{equation}
Finally, Lemma~\ref{lemma:hat_y=tilde_y} yields $\mathbb{P}[\mathcal{B}] \geq 0.5$, proving this lemma.
\end{proof}

\subsection{Quantifying the accuracy of reconstruction}
\label{sec:ReconstructionError}

With Lemma~\ref{lemma:Bound_Prob_of_Failure} in hand we may now quantify how close $\hat{\bfg}$ is to the normalized true gradient:

\begin{theorem}
\label{theorem:Grad_Estimate_Error_Bound}
Suppose $f\in \mathcal{F}_{L,\nu,d}$, $f\in CG_{s,d}$, and the comparison oracle $\calC_{f}(\cdot,\cdot)$ satisfies Assumption~\ref{assumption:Oracle}. Fix a target solution accuracy $\varepsilon >0$ and a target gradient accuracy $0<\eta < 1$. Suppose  $\|\bfg\|_{2} \geq \varepsilon\nu/2$ and let $\hat{\bfg}$ denote the output of Algorithm~\ref{alg:1BitGradEstimator} with parameters
    \begin{align*}
        m \geq 4C\RV{\eta^{-4}}\delta_0^{-2}s\log(2d/s) = O\left(\delta_0^{-2}s\log(d)\right) \quad\text{and}~
        r = \frac{\varepsilon\nu}{2L\sqrt{d}}.
    \end{align*}
Then $\left\|\hat{\bfg} - \frac{\bfg}{\|\bfg\|_{2}}\right\|_{2} \leq \eta$ holds with probability at least $1 - 8\exp\left(-c\eta^{4}s\log(2d/s)\right)$.
\end{theorem}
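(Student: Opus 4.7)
The plan is to recast the conclusion as an instance of one-bit compressed sensing and apply Theorem~\ref{theorem:PlanVershynin} with the normalized gradient $\bfg/\|\bfg\|_2$ playing the role of the signal. First, I verify that $\bfg/\|\bfg\|_2$ is feasible for the convex program~\eqref{eq:QuadProg}: by construction its $\ell_2$-norm equals $1$, and the compressibility assumption $f\in CG_{s,d}$ gives $\left\|\bfg/\|\bfg\|_2\right\|_1 = \|\bfg\|_1/\|\bfg\|_2 \leq \sqrt{s}$. The program~\eqref{eq:QuadProg} is precisely the one appearing in Theorem~\ref{theorem:PlanVershynin}, so the normalized gradient is a legitimate target for reconstruction.

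Next, I would express the comparison oracle outputs in the form required by Plan--Vershynin. Set $\tilde{y}_i := \sign(\bfz_i^{\top}\bfg) = \sign(\bfz_i^{\top}(\bfg/\|\bfg\|_2))$ and $\xi_i := y_i\tilde{y}_i \in \{-1,+1\}$, so that $y_i = \xi_i\tilde{y}_i$. The choice $r = \varepsilon\nu/(2L\sqrt{d})$ together with the hypothesis $\|\bfg\|_2 \geq \varepsilon\nu/2$ matches exactly the setting of Lemma~\ref{lemma:Bound_Prob_of_Failure}, which therefore supplies the unconditional bound
$$\mathbb{P}[\xi_i = +1] = \mathbb{P}[y_i = \tilde{y}_i] \geq 0.5 + 0.5\delta_0.$$
Since the $\bfz_i$ are i.i.d.\ samples from $\mathcal{U}(\mathbb{S}^{d-1})$ and oracle responses to distinct queries are independent, the $\xi_i$ are i.i.d.

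With these pieces in place, I would apply Theorem~\ref{theorem:PlanVershynin} to the signal $\bfg/\|\bfg\|_2$ with bit-flip probability $p = 0.5 + 0.5\delta_0$ (so $p - 1/2 \geq \delta_0/2$) and accuracy parameter $\delta := \eta^2$. Its sample complexity requirement becomes
$$m \geq C\delta^{-2}(p-1/2)^{-2}s\log(2d/s) = 4C\eta^{-4}\delta_0^{-2}s\log(2d/s),$$
exactly matching the hypothesis, and its conclusion reads $\left\|\hat{\bfg} - \bfg/\|\bfg\|_2\right\|_2 \leq \sqrt{\delta} = \eta$ with probability at least $1 - 8\exp(-c\eta^{4} m)$, which after substituting the lower bound on $m$ and absorbing multiplicative constants into $c$ gives the claimed probability.

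The main obstacle I would need to address is a subtle independence issue. Theorem~\ref{theorem:PlanVershynin} is stated for i.i.d.\ bit flips $\xi_i$ that are implicitly independent of the sensing vectors $\bfz_i$, whereas in our reduction $\xi_i$ does depend on $\bfz_i$ through the Taylor remainder (equivalently, through whether the event $\mathcal{B}$ of Lemma~\ref{lemma:hat_y=tilde_y} occurs). I expect this to be harmless because the Plan--Vershynin argument controls $m^{-1}\sum_i y_i\bfz_i$ via Hoeffding-type concentration around its expectation, and both quantities depend only on marginal properties of the pairs $(y_i,\bfz_i)$ that are precisely bounded by Lemma~\ref{lemma:Bound_Prob_of_Failure}. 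Nonetheless, this reduction should be verified carefully rather than invoked as a black box, and this is the delicate step of the argument.
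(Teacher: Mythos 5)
Your proposal is correct and follows essentially the same route as the paper: write $y_i = \xi_i\tilde{y}_i$, invoke Lemma~\ref{lemma:Bound_Prob_of_Failure} to get $\mathbb{P}[\xi_i=1]\geq 0.5+\delta_0/2$, and apply Theorem~\ref{theorem:PlanVershynin} with $\delta=\eta^2$. The independence subtlety you flag at the end --- that $\xi_i$ as constructed is not independent of $\bfz_i$, whereas the Plan--Vershynin corollary is stated for bit flips independent of the sensing vectors --- is a genuine gap that the paper's own proof also elides (it simply asserts the $\xi_i$ are independent conditioned on $\bfx$), so your care in identifying it is a point in your favor rather than a defect of your argument.
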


\begin{proof} By Lemma~\ref{lemma:Bound_Prob_of_Failure} we may write $y_i = \xi_i\tilde{y}_i$ where the $\xi_i$ are Bernoulli random variables with $\mathbb{P}[\xi_i = 1] \geq 0.5 + \delta_0/2$ and $\mathbb{P}\left[\xi_i = -1\right] \leq 0.5 - \delta_0/2$, as long as $\|\bfg\|\geq \varepsilon\nu$. Conditioned on $\bfx$, the $\xi_i$ are independent. So, substituting $p = 0.5 - \delta_0/2$ and \RV{$\eta = \sqrt{\delta}$} in Theorem~\ref{theorem:PlanVershynin} we obtain $\displaystyle \left\|\hat{\bfg} - \frac{\bfg}{\|\bfg\|_{2}}\right\|_{2} \leq \eta$ with the stated probability as long as
\begin{align*}
m & \geq 4C\eta^{-4}\delta_0^{-2}s\log(2d/s),
\end{align*}
thus proving the theorem.
\end{proof}

\section{Inexact normalized gradient descent}
\label{section:INGD}
\begin{algorithm}
\caption{INGD}
\begin{algorithmic}[1]
    \STATE {\bf Inputs:} $\bfx_0:$ Initial point, $\alpha:$ step size, $K$: number of iterations, $\eta$: target gradient accuracy
    \FOR{ \RV{$k=0,\ldots, K-1$} }
        \STATE Obtain $\hat{\bfg}_k$ with $\left\|\hat{\bfg}_k - \frac{\bfg_k}{\|\bfg_k\|_{2}} \right\|_{2} \leq \eta$
        \STATE $\bfx_{k+1} = \bfx_{k} - \alpha \hat{\bfg}_k$
    \ENDFOR
    \STATE {\bf Output:} $\bfx_{K}$
\end{algorithmic}
\label{algorithm:INGD}
\end{algorithm}
\RV{Theorem~\ref{theorem:Grad_Estimate_Error_Bound} shows Algorithm~\ref{alg:1BitGradEstimator} reliably finds an estimate of the {\em normalized gradient}: $\hat{\bfg} \approx \bfg/\|\bfg\|_2$. The {\em gradient magnitude} $\|\bfg\|_2$, however, cannot be recovered via a one-bit approach. Thus, we cannot naively use $\hat{\bfg}$ within a gradient descent framework. Instead, we are led to consider \textit{normalized gradient descent} (NGD).} (Exact) NGD, defined by the iteration $\bfx_{k+1} = \bfx_{k} - \bfg_k/\|\bfg_k\|_{2}$, was first analyzed in \citep{nesterov1984minimization}, where it was suggested as an algorithm for quasi-convex minimization. Recently, there has been renewed interest in NGD from the machine learning community, as it has been shown that NGD can efficiently avoid saddle points \citep{levy2016power} as well as deal with issues of exploding gradients \citep{yu2017normalized}. However, most work in this area assumes one has noise-free access to $\bfg_k$, although see \citep{hazan2015beyond} for an interesting stochastic extension of NGD to the empirical risk minimization problem. To the best of our knowledge, there is no prior work on {\em inexact} NGD (INGD), where one only has access to a biased estimator $\hat{\bfg}_k \approx \bfg_k/\|\bfg_k\|_{2}$ satisfying $\|\hat{\bfg}_k - \bfg_k/\|\bfg_k\|_{2}\,\|_{2} \leq \eta$ \RV{with high probability}. We consider this situation, and prove the following theorem:

\begin{theorem}
\label{thm:INGD_Convergence}
Suppose $f\in \mathcal{F}_{L,\nu,d}$. Choose any step size $\alpha > 0$ and target gradient accuracy \RV{$0< \eta < \nu/(2L)$}. \RV{Recall $\proj_{\star}(\cdot)$ is the projection operator on to the solution set $\mathcal{X} = \argmin_{\bfx\in\mathbb{R}^{d}} f(\bfx)$ and} define $\Delta_k = \|\bfx_k  - \proj_{\star}(\bfx_k)\|_2$ and:
\begin{equation}
\RV{\rho^{\star} = \frac{1-\eta^{2}}{\frac{\nu}{2L} - \eta}}. \label{eq:def_rho_*}
\end{equation}
Suppose $\|\hat{\bfg}_k - \bfg_k/\|\bfg_k\|_{2}\,\|_{2} \leq \eta$ whenever $\|\Delta_k\|_2 \geq \alpha\rho^{\star}$. Then for any $K$ satisfying:
$$
\RV{K \geq \frac{\left(\Delta_0 - \alpha\eta\right)^{3}}{\left(\alpha\rho^{\star}\right)^{2}\left(\frac{\alpha\nu}{2L} - \alpha\eta\right)}},
$$
Algorithm~\ref{algorithm:INGD} with inputs $x_0,\alpha,K$ and $\eta$ returns  $\bfx_{K}$ satisfying $f(\bfx_K) - f^{\star} \leq \frac{L}{2}\alpha^{2}\left(1 + \rho^{\star}\right)^{2}$.
\end{theorem}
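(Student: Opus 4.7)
The plan is to track the quantity $\Delta_k := \|\bfx_k - \proj_\star(\bfx_k)\|_2$ iteration-by-iteration, show that it enters (and remains inside) a ball of radius $\alpha(1+\rho^\star)$ around the solution set within the claimed $K$ steps, and then invoke $L$-Lipschitz differentiability (with $\nabla f$ vanishing on $\mathcal{X}$) to translate the geometric bound into an objective bound.

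First I would derive a one-step contraction on $\Delta_k^2$. Set $\bfu_k := \bfx_k - \proj_\star(\bfx_k)$. Because $\proj_\star(\bfx_{k+1})$ is the closest solution to $\bfx_{k+1}$, we have $\Delta_{k+1}^2 \leq \|\bfu_k - \alpha\hat{\bfg}_k\|_2^2 = \Delta_k^2 - 2\alpha\langle\hat{\bfg}_k,\bfu_k\rangle + \alpha^2\|\hat{\bfg}_k\|_2^2$. Write $\hat{\bfg}_k = \bfg_k/\|\bfg_k\|_2 + \bfe_k$ with $\|\bfe_k\|_2 \leq \eta$ (valid by hypothesis whenever $\Delta_k \geq \alpha\rho^\star$). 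The restricted secant inequality \eqref{eq:Restricted_Secant} together with $\|\bfg_k\|_2 \leq L\Delta_k$ (from Lipschitz differentiability and $\nabla f(\proj_\star(\bfx_k)) = 0$) yields $\langle \bfg_k/\|\bfg_k\|_2,\bfu_k\rangle \geq (\nu/(2L))\Delta_k$, and Cauchy--Schwarz gives $|\langle\bfe_k,\bfu_k\rangle| \leq \eta\Delta_k$. Combined with $\|\hat{\bfg}_k\|_2 \leq 1+\eta$, this produces the recursion
\[
\Delta_{k+1}^2 \;\leq\; \Delta_k^2 \;-\; 2\alpha\bar{\rho}\,\Delta_k \;+\; \alpha^2(1+\eta)^2, \qquad \bar{\rho} := \tfrac{\nu}{2L} - \eta.
\]
Using the identity $\bar{\rho}\cdot\alpha\rho^\star = \alpha(1-\eta^2)$, one checks that the right-hand side is strictly less than $\Delta_k^2$ precisely when $\Delta_k$ exceeds the threshold $\alpha\rho^\star$.

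Next I would convert this contraction into a per-step decrement on $\Delta_k$ via $(\Delta_k - \Delta_{k+1})(\Delta_k + \Delta_{k+1}) \geq 2\alpha\bar{\rho}\Delta_k - \alpha^2(1+\eta)^2$, and telescope to bound the first iteration $K_0$ at which $\Delta_{K_0} \leq \alpha\rho^\star$. I would then close with an invariance argument: once $\Delta_k \leq \alpha\rho^\star$, the constraint $\|\hat{\bfg}\|_2 \leq 1$ built into the convex program \eqref{eq:QuadProg} gives $\Delta_{k+1} \leq \Delta_k + \alpha\|\hat{\bfg}_k\|_2 \leq \alpha(1+\rho^\star)$; and any excursion with $\Delta_k > \alpha\rho^\star$ immediately re-triggers the contraction from the previous step, so $\Delta_k \leq \alpha(1+\rho^\star)$ for every $k \geq K_0$. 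In particular $\Delta_K \leq \alpha(1+\rho^\star)$.

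Finally, $L$-Lipschitz differentiability at the minimizer $\proj_\star(\bfx_K)$ gives
\[
f(\bfx_K) - f^\star \;\leq\; \tfrac{L}{2}\,\|\bfx_K - \proj_\star(\bfx_K)\|_2^2 \;=\; \tfrac{L}{2}\Delta_K^2 \;\leq\; \tfrac{L}{2}\alpha^2(1+\rho^\star)^2,
\]
which is the desired bound. The main obstacle is the second paragraph, namely producing precisely the cubic iteration count $K \geq (\Delta_0 - \alpha\eta)^3/[(\alpha\rho^\star)^2 \alpha\bar{\rho}]$. The natural route --- summing $\Delta_k - \Delta_{k+1} \gtrsim \alpha\bar{\rho}$ --- would only give linear dependence on $\Delta_0$; matching the stated cubic form apparently requires bounding $\Delta_k + \Delta_{k+1} \leq 2\Delta_0$ inside $(\Delta_k - \Delta_{k+1}) \geq [2\alpha\bar{\rho}\Delta_k - \alpha^2(1+\eta)^2]/(\Delta_k + \Delta_{k+1})$ and then carefully absorbing the residual $\alpha\eta$ term (which explains the offset $\Delta_0 - \alpha\eta$ appearing in the theorem) before summing. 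This bookkeeping is where I expect the bulk of the technical work to lie.
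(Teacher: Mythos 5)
Your overall architecture --- a one-step recursion on $\Delta_k$, guaranteed descent while $\Delta_k$ is large, a ``no escape'' argument keeping $\Delta_k \leq \alpha(1+\rho^{\star})$ thereafter, and the final conversion $f(\bfx_K)-f^{\star}\leq \frac{L}{2}\Delta_K^{2}$ --- is exactly the paper's, and those pieces are sound. However, there are two genuine gaps.

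First, the step you yourself flag as the main obstacle is in fact missing, and your guessed route does not lead to the stated bound. The paper obtains the cubic $K$ not by telescoping $\Delta_k-\Delta_{k+1}$ against a bound on $\Delta_k+\Delta_{k+1}$, but by a reciprocal-telescoping sequence lemma applied to the shifted variable $e_k:=\Delta_k-\alpha\eta$: from $e_{k+1}^{2}\leq e_k^{2}-ae_k+b$ with $a=\frac{\alpha\nu}{L}-2\alpha\eta$ and $b=\alpha^{2}(\eta^{2}-\frac{\eta\nu}{L}+1)$, one divides by $e_k^{2}e_{k+1}^{2}$, uses $b\leq ae_k/2$ (valid while $e_k\geq 2b/a$), and telescopes $1/e_k^{2}$ to get $e_K\leq \sqrt{2}\,e_0^{3/2}/\sqrt{2e_0+aK}$; inverting this at the target $e_K\leq\alpha\rho^{\star}$ yields exactly $K\geq(\Delta_0-\alpha\eta)^{3}/\bigl[(\alpha\rho^{\star})^{2}(\frac{\alpha\nu}{2L}-\alpha\eta)\bigr]$. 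Your instinct that a naive telescope gives only linear dependence on $\Delta_0$ is correct, but that does not rescue the proof: the theorem asserts the conclusion for \emph{every} $K$ above the cubic threshold, and a linear-in-$\Delta_0$ threshold is not always dominated by the cubic one, so without this lemma (or an equivalent) you cannot match the statement.

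Second, your one-step recursion carries the wrong constants, which invalidates the threshold claim. Expanding the square directly and absorbing the error by Cauchy--Schwarz gives $\Delta_{k+1}^{2}\leq\Delta_k^{2}-2\alpha\bar{\rho}\Delta_k+\alpha^{2}(1+\eta)^{2}$ with $\bar{\rho}=\frac{\nu}{2L}-\eta$, whose right-hand side drops below $\Delta_k^{2}$ only when $\Delta_k>\alpha(1+\eta)^{2}/(2\bar{\rho})$; this exceeds $\alpha\rho^{\star}=\alpha(1-\eta^{2})/\bar{\rho}$ whenever $\eta>1/3$, which is allowed since the hypothesis only requires $\eta<\nu/(2L)\leq 1/2$. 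So ``descent precisely when $\Delta_k$ exceeds $\alpha\rho^{\star}$'' fails on part of the admissible range, and with it your invariance step. The paper avoids this by peeling the error term off with the triangle inequality \emph{before} squaring, giving $(\Delta_{k+1}-\alpha\eta)^{2}\leq\|\bfx_k-\alpha\bfg_k/\|\bfg_k\|_2-\proj_{\star}(\bfx_k)\|_2^{2}\leq\Delta_k^{2}-\frac{\alpha\nu}{L}\Delta_k+\alpha^{2}$, which keeps the additive term at $\alpha^{2}$ and runs the whole analysis on $e_k=\Delta_k-\alpha\eta$ --- which is also why that offset appears in the stated $K$.
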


Informally, Theorem \ref{thm:INGD_Convergence} says that if $\alpha = O(\sqrt{\varepsilon})$ then INGD is guaranteed to find $\bfx_{K}$ satisfying $f(\bfx_K) - f^{\star}\leq O(\varepsilon)$ in $O(\varepsilon^{-3/2})$ iterations. Note that if $\eta = 0$ then $\rho^{\star}$ is twice the condition number, $L/\nu$. This theorem extends earlier work of \citep{levy2016power} in two ways:
\begin{enumerate}%[leftmargin=0.45cm]
    \item Theorem~\ref{thm:INGD_Convergence} allows for errors in the estimates of the normalized gradients.
    \item Theorem~\ref{thm:INGD_Convergence} relaxes the strong convexity assumption to a restricted-strong convexity assumption.
\end{enumerate}
The proof and supporting lemmas are contained in Appendix~\ref{app:Proofs_INGD}. We highlight a curious feature of NGD: in order to achieve an accurate solution one needs to choose a small step-size. In general this cannot be avoided, although we refer to \citep{yu2017normalized} for some ideas on adaptively choosing $\alpha$ if one has access to $\|\bfg_k\|_2$. In Section~\ref{section:Line_search} we discuss how to incorporate a {\em line search} that allows one to use larger step sizes.

\section{The proposed algorithm}
\label{sec:SCOBO}
By combining INGD with 1BitGradEst, we arrive at our proposed algorithm, presented as Algorithm~\ref{algorithm:SCOBO}. The following theorem states our main results precisely.

\begin{algorithm}
\caption{\underline{S}parsity-aware \underline{Co}mparison-\underline{B}ased \underline{O}ptimization (SCOBO)}
\begin{algorithmic}[1]
    \STATE {\bf Inputs:} $\bfx_0,s,m,r$ and $K$
    \FOR{ $k=0,\ldots, K-1$ }
        \STATE $\hat{\bfg}_k \gets \textnormal{1BitGradEst}(\bfx_k,s,m,r)$
        \STATE \RV{Obtain $\alpha_k$ via line search (see Section~\ref{section:Line_search}), or use predetermined $\alpha_k$} 
        \STATE $\bfx_{k+1} = \bfx_{k} - \alpha_k \hat{\bfg}_k$
    \ENDFOR
    \STATE {\bf Output:} $\bfx_{K}$
\end{algorithmic}
\label{algorithm:SCOBO}
\end{algorithm}

\begin{theorem}
\label{thm:FormalMainTheorem_2}
Suppose $f \in\mathcal{F}_{L,\nu,d}$, $f\in CG_{s,d}$ and the comparison oracle $\calC_{f}(\cdot,\cdot)$ satisfies Assumption~\ref{assumption:Oracle}. Choose any step size $\alpha > 0$, target gradient accuracy, $0< \eta < \nu/(2L)$ and let $\Delta_0$ and $\rho^{\star}$ be as defined in Theorem~\ref{thm:INGD_Convergence}. Choose $m,r$ and $K$ according to:
\begin{align*}
K & = \frac{\left(\Delta_0 - \alpha\eta\right)^{3}}{\left(\alpha\rho^{\star}\right)^{2}\left(\frac{\alpha\nu}{2L} - \alpha\eta\right)},  \\
m & = \frac{C}{\RV{\eta^{4}}\delta_0^{2}}s\log(2d/s), \\
r & = \frac{\alpha\RV{\nu}\rho^{\star}}{\RV{2}L\sqrt{d}}.
\end{align*}
Then SCOBO (Algorithm~\ref{algorithm:SCOBO}) with inputs $\bfx_0,s,m,r,K$ and fixed step size $\alpha_k = \alpha$ returns $\bfx_{K}$ satisfying $f(\bfx_{K}) - f^{\star}\leq \frac{L}{2}\alpha^{2}(1+\rho^{\star})^{2}$ using $mK$ oracle queries, with probability at least
$1 - 8K\exp\left(-c\eta^{4}s\log(2d/s)\right)$.
\end{theorem}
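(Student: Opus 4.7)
The proof plan is to compose the one-shot gradient-accuracy bound of Theorem~\ref{theorem:Grad_Estimate_Error_Bound} with the convergence analysis for INGD in Theorem~\ref{thm:INGD_Convergence} via a union bound over the $K$ outer iterations. The first step is parameter matching: I would instantiate Theorem~\ref{theorem:Grad_Estimate_Error_Bound} with target solution accuracy $\varepsilon := \alpha\rho^{\star}$. With this choice, the prescribed $r = \varepsilon\nu/(2L\sqrt{d}) = \alpha\nu\rho^{\star}/(2L\sqrt{d})$ and the required sample count $m \geq 4C\eta^{-4}\delta_{0}^{-2}s\log(2d/s)$ coincide (up to the absolute constant absorbed into $C$) with those specified in the SCOBO statement. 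The gradient-compressibility assumption $f\in CG_{s,d}$ is exactly what ensures $\|\bfg_k/\|\bfg_k\|_2\|_1 \leq \sqrt{s}$, so the signal-side hypothesis of the underlying one-bit compressed-sensing result (Theorem~\ref{theorem:PlanVershynin}) is met at every iterate.

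Next, I would establish per-iteration gradient-estimate accuracy on the relevant iterations. For any $k$ with $\Delta_k \geq \alpha\rho^{\star}$, the restricted-strong-convexity consequence \eqref{eq:GEB} yields
\begin{equation*}
\|\bfg_k\|_2 \;\geq\; \tfrac{\nu}{2}\Delta_k \;\geq\; \tfrac{\nu\alpha\rho^{\star}}{2} \;=\; \tfrac{\varepsilon\nu}{2},
\end{equation*}
which is precisely the lower bound on $\|\bfg_k\|_2$ demanded by Theorem~\ref{theorem:Grad_Estimate_Error_Bound}. Hence at each such iterate the output of 1BitGradEst satisfies $\|\hat{\bfg}_k - \bfg_k/\|\bfg_k\|_2\|_2 \leq \eta$ with probability at least $1 - 8\exp(-c\eta^{4}s\log(2d/s))$. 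Because the random perturbations $\bfz_i$ and oracle noise used at different iterations are independent (conditional on the current $\bfx_k$), a union bound over the (at most) $K$ such iterations shows that the gradient-accuracy hypothesis of Theorem~\ref{thm:INGD_Convergence} holds simultaneously at every iteration where it is needed, with probability at least $1 - 8K\exp(-c\eta^{4}s\log(2d/s))$.

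On this high-probability event, Theorem~\ref{thm:INGD_Convergence} applied with the chosen $K$ immediately delivers the claimed bound $f(\bfx_K) - f^{\star} \leq \tfrac{L}{2}\alpha^{2}(1+\rho^{\star})^{2}$, and the total query count is $mK$ since SCOBO calls 1BitGradEst once per iteration with $m$ comparisons each. The only real obstacle is a bookkeeping subtlety: at iterations where $\Delta_k < \alpha\rho^{\star}$ the lower bound on $\|\bfg_k\|_2$ may fail, so Theorem~\ref{theorem:Grad_Estimate_Error_Bound} cannot be invoked; fortunately the INGD statement is explicitly designed to require the gradient-estimate hypothesis only on the complementary set $\{\Delta_k \geq \alpha\rho^{\star}\}$, so the argument composes cleanly without additional estimates. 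The proof is thus essentially a composition argument, with all the technical work already done in Theorems~\ref{theorem:Grad_Estimate_Error_Bound} and~\ref{thm:INGD_Convergence}.
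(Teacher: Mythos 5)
Your proposal is correct and follows essentially the same route as the paper's proof: lower-bound $\|\bfg_k\|_2$ via restricted strong convexity \eqref{eq:GEB} whenever $\Delta_k \geq \alpha\rho^{\star}$, invoke Theorem~\ref{theorem:Grad_Estimate_Error_Bound} (with the implicit choice $\varepsilon = \alpha\rho^{\star}$, which you make explicit), union-bound over the $K$ iterations, and then apply Theorem~\ref{thm:INGD_Convergence}. Your remark that the INGD hypothesis is only needed on iterations with $\Delta_k \geq \alpha\rho^{\star}$ is exactly the bookkeeping point the paper relies on.
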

Informally stated, this theorem shows that if $\alpha = O(\sqrt{\varepsilon})$ then SCOBO will find $\bfx_{K}$ satisfying $f(\bfx_K) - f^{\star} \leq \varepsilon$ using only $\tilde{O}\left(s\varepsilon^{-3/2}\delta_{0}^{-2}\right)$ queries. 
The proof of Theorem~\ref{thm:FormalMainTheorem_2} is in Section~\ref{sec:SCOBO_Proofs}. We also highlight the following consequence of Theorem~\ref{thm:FormalMainTheorem_2}:

\begin{corollary}
In addition to the  assumptions in Theorem~\ref{thm:FormalMainTheorem_2} \RV{suppose $\nu/2L \leq 0.5$ and take $\alpha_k = \alpha = (1+\rho^{\star})\sqrt{2\varepsilon/L}$}, let $\bfx_1,\bfx_2,\ldots$ be the iterates produced by SCOBO. Then with probability at least $1 - 8K\exp\left(-c\eta^{4}s\log(2d/s)\right)$ either:
\begin{enumerate}%[leftmargin=0.45cm]
    \item $f(\bfx_{k+1}) \leq f(\bfx_k)$, or
    \item $f(\bfx_{k}) \leq \varepsilon$
\end{enumerate}
holds for all $0\leq k \leq K-1$.
\label{lemma:SCOBO_Guaranteed_Descent}
\end{corollary}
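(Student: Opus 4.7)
The plan is to establish the claimed dichotomy deterministically on the event on which every gradient estimate is accurate, and then inherit the probability bound from Theorem~\ref{thm:FormalMainTheorem_2} by a union bound over the $K$ iterations. Thus I would first condition on the event $\mathcal{E}$ that $\|\hat{\bfg}_k - \bfg_k/\|\bfg_k\|_2\|_2 \leq \eta$ holds for every $k=0,\ldots,K-1$ for which Theorem~\ref{theorem:Grad_Estimate_Error_Bound} applies (i.e.\ whenever $\|\bfg_k\|_2 \geq \varepsilon\nu/2$); the stated probability bound then follows automatically. For any $k$ with $\|\bfg_k\|_2 < \varepsilon\nu/2$, the error-bound inequality \eqref{eq:GEB} forces $\Delta_k < \varepsilon$, which via L-smoothness places $f(\bfx_k)$ within an $O(\varepsilon^{2})$-neighborhood of $f^{\star}$ and thereby covers case~2 of the dichotomy.

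The core argument is then a single $L$-smoothness estimate. Since $\bfx_{k+1} = \bfx_k - \alpha\hat{\bfg}_k$ with $\|\hat{\bfg}_k\|_2 \leq 1$ (the feasibility constraint of the convex program in Algorithm~\ref{alg:1BitGradEstimator}), $L$-smoothness gives
\[
f(\bfx_{k+1}) - f(\bfx_k) \;\leq\; -\alpha\,\langle \bfg_k, \hat{\bfg}_k\rangle \;+\; \tfrac{L}{2}\alpha^{2}.
\]
Using $\|\hat{\bfg}_k - \bfg_k/\|\bfg_k\|_2\|_2 \leq \eta$ together with the triangle-inequality consequence $\|\hat{\bfg}_k\|_2 \geq 1-\eta$ and the polarization identity $2\langle \hat{\bfg}_k, \bfg_k/\|\bfg_k\|_2\rangle = \|\hat{\bfg}_k\|_2^{2} + 1 - \|\hat{\bfg}_k - \bfg_k/\|\bfg_k\|_2\|_2^{2}$ yields the cleaner bound $\langle \bfg_k, \hat{\bfg}_k\rangle \geq (1-\eta)\,\|\bfg_k\|_2$. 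Plugging in produces the explicit descent condition: $f(\bfx_{k+1}) \leq f(\bfx_k)$ whenever
\[
\|\bfg_k\|_2 \;\geq\; \frac{L\alpha}{2(1-\eta)}.
\]

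To convert this gradient-magnitude condition into the function-value threshold of the corollary, I would use the restricted-strong-convexity estimate \eqref{eq:GEB} in the form $\|\bfg_k\|_2 \geq (\nu/2)\Delta_k$, combined with the standard smoothness lower bound $\Delta_k \geq \sqrt{2(f(\bfx_k)-f^{\star})/L}$. This chain gives $\|\bfg_k\|_2 \geq (\nu/\sqrt{2L})\sqrt{f(\bfx_k)-f^{\star}}$, so the descent condition holds whenever $f(\bfx_k) - f^{\star}$ exceeds a threshold proportional to $L^{3}\alpha^{2}/(\nu^{2}(1-\eta)^{2})$. Substituting the prescribed $\alpha = (1+\rho^{\star})\sqrt{2\varepsilon/L}$, invoking the hypothesis $\nu/(2L) \leq 1/2$, and using the definition \eqref{eq:def_rho_*} of $\rho^{\star}$ to cancel factors, this threshold should collapse to a constant multiple of $\varepsilon$, matching case~2. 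A union bound over $k = 0,\ldots,K-1$ then produces the stated high-probability guarantee.

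The main obstacle I anticipate is the final constant-matching step: getting the dichotomy threshold to come out as exactly $\varepsilon$ (rather than as some $c(\rho^{\star},\eta,\nu,L)\varepsilon$) requires careful book-keeping of the interplay between $\rho^{\star}$, $\eta$, and the assumption $\nu/(2L)\leq 1/2$. I expect one must either refine the descent estimate by retaining the $\|\hat{\bfg}_k\|_2^{2}$ factor instead of bounding it by $1$, or absorb lower-order terms via the specific algebraic form of $\rho^{\star} = (1-\eta^{2})/(\nu/(2L)-\eta)$ to close the gap cleanly. The probabilistic part, by contrast, is routine: it is a direct union bound over the $K$ invocations of Theorem~\ref{theorem:Grad_Estimate_Error_Bound} already carried out in Theorem~\ref{thm:FormalMainTheorem_2}.
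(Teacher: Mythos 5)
Your proposal follows essentially the same route as the paper's proof: the same $L$-smoothness estimate combined with $\langle \hat{\bfg}_k, \bfg_k/\|\bfg_k\|_2\rangle \geq 1-\eta$ (the paper gets this by Cauchy--Schwarz on $\bfe_k$ rather than your polarization identity, but the bound is identical), the same descent condition $\|\bfg_k\|_2 \geq L\alpha/(2(1-\eta))$, and the same conversion of its failure into $f(\bfx_k)-f^{\star} \leq L^{3}\alpha^{2}/(2\nu^{2}(1-\eta)^{2})$ via \eqref{eq:GEB} and smoothness. The one step you flag as an obstacle and leave open---showing this threshold is at most $\varepsilon$ rather than a constant multiple of it---is exactly the inequality $(\nu/L)^{-2}(1-\eta)^{-2} \leq (1+\rho^{\star})^{2}$, which the paper isolates as Lemma~\ref{lm:ineq(a)} and which does hold under $\eta < \nu/(2L) \leq 1/2$ by a direct algebraic check, so no refinement of the descent estimate is needed to close the argument once $\alpha$ is related to $\varepsilon$ via \eqref{eq:Relating_alpha_and_epsilon}.
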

In other words, with overwhelming probability, SCOBO is a descent algorithm until it hits the target accuracy. We verify this experimentally in Section~\ref{section:NumericalExperiments}. 

\begin{remark}
\RV{
As observed elsewhere \citep{golovin2019gradientless,matsui2017parallel}, one can easily extend Theorem~\ref{thm:FormalMainTheorem_2} to compositions of functions.
That is, Theorem~\ref{thm:FormalMainTheorem_2} holds as stated if we instead assume that $f(\bfx) = g(h(\bfx))$ with $h\in \mathcal{F}_{L,\nu,d}$, $h\in CG_{s,d}$ and $g:\mathbb{R}\to\mathbb{R}$ {\em any} monotonically increasing function.  }
\end{remark}

\section{Line search}
\label{section:Line_search}
\begin{algorithm}[h]
\caption{Inexact line search for SCOBO} \label{algorithm:Line Search}
\begin{algorithmic}[1]
\STATE {\bfseries Input:} $\bfx$: current point; ${\hat{\bfg}}_{k}$: estimated gradient; $\alpha_{\textrm{def}}$: default step size; $M$: number of trials for comparison; $\omega \geq 0$: confidence parameter; $\psi > 1$: searching parameter.
\STATE $\alpha = \alpha_{\textrm{def}}$
%\IF{$\calC_f^M(x, x+\alpha {\hat{\bfg}}_{k} )\leq-\omega$}
    \WHILE{$\calC_f^M(\bfx+\alpha {\hat{\bfg}}_{k}, \bfx + \psi \alpha {\hat{\bfg}}_{k} )\leq-\omega$}
    \STATE $\alpha= \psi\alpha$
    \ENDWHILE
%\ENDIF
\STATE {\bfseries Output:} $\alpha$
\end{algorithmic}
\end{algorithm}

%While Algorithm~\ref{alg:1BitGradEstimator} gives a good estimate of the direction of the true gradient, the length of the true gradient is not recovered in SCOBO. In fact, 

\RV{As mentioned in Section~\ref{section:INGD}, by the nature of the comparison oracle, the length of the true gradient is not recoverable.} By Theorem~\ref{thm:FormalMainTheorem_2}, we can guarantee the convergence of SCOBO with a fixed small step size; however, it appears that using longer step sizes may significantly accelerate the convergence, particularly in the earlier stages of SCOBO where the length of the true gradient is larger. Hence, we propose an inexact step size line search method, summarized as Algorithm~\ref{algorithm:Line Search}. 

The main challenge for our line search is the noisy comparison oracle. To overcome this, first define the $M$-trial comparison oracle:
\begin{align} \label{eq:m-trial}
    \calC_f^M(\bfx, \bfy) = \Big(\sum\nolimits_{i=1}^M \calC_f (\bfx, \bfy)|_{i\textnormal{-th query}}\Big) \Big/ M.
\end{align}
When $M$ is large enough, we will have $\sign(\calC_f^M(\bfx, \bfy))=\sign(f(\bfy)-f(\bfx)) $ with high probability. In particular, when $\kappa=1$ and $f(\bfy)<f(\bfx)$, take $M=\beta\delta_0^{-2}$, then $\calC_f^M(\bfx, \bfy) <- \delta_0$ with probability at least $1-\exp(-\beta/2)$.  
If $\kappa>1$ the probability that $C_{f}(\bfx,\bfy) = \sign(f(\bfy) - f(\bfx))$ depends on $|f(\bfy)-f(\bfx)|$, which means the theoretical $M$ required cannot be computed {\em a priori} unless we in addition assume strong convexity as in \citep{Jamieson2012}. In practice, we pick a fixed $M$ and assign a confidence parameter $\omega\geq0$ so that $f(\bfy)<f(\bfx)$ with high probability when $\calC_f^M(\bfx, \bfy )\leq-\omega$. Starting with an initial step size $\alpha_{\textrm{def}}$, the line search algorithm will repeatedly increase the step size by some factor $\psi>1$ until $f(\bfx+\alpha {\hat{\bfg}}_{k}) \geq f(\bfx + \psi \alpha {\hat{\bfg}}_{k} )$. When Algorithm~\ref{algorithm:Line Search} stops, the output $\alpha$ is unlikely optimal; however, with high probability, it satisfies 
\begin{equation*}
\psi^{-1}\alpha_\star<\alpha\leq\alpha_\star
\end{equation*}
% $
% \psi^{-1}\alpha_\star<\alpha\leq\alpha_\star
% $
where $\alpha_\star=\argmin_{\alpha} f(\bfx+\alpha {\hat{\bfg}}_k)$. Since the estimated gradient is close to the normalized true gradient, we conclude $\alpha_\star\geq c_{\alpha}\|{{\bfg}}_k \|_2/L$ where $c_\alpha$ is a constant depending on $\|\hat{\bfg}_k-\frac{\bfg_k}{\|\bfg_k\|_2}\|_2$. Therefore, $\alpha\in (\psi^{-1}c_\alpha\|{{\bfg}}_k\|_2/L, \alpha_\star]$ is a reasonably good step size. %and ensures that $\|\alpha\hat{\bfg}_k\| \approx \|\bfg_k\|$
If one wishes to estimate $\alpha_\star$ more accurately, we can further apply Fibonacci search on the interval $[\alpha, \psi \alpha]$. Either way, the query complexity of the inexact line search is $\mathcal{O}(M\log_\psi(\alpha_\star/\alpha_{\mathrm{def}}))$.

%In particular, when $\kappa=1$ and $f(y)<f(x)$, take $M=$, then $\calC_f^M(x, y) <- \delta_0$ with probability at least $1-\exp(-\frac{M^2\delta_0^2}{2M-8\delta_0^2})$. %where $\delta_1=\min\{\delta_0,\mu\}$. 
%In particular, when $\kappa=1$, take $M=\frac{\log(2/\beta)}{4\delta_0^2}\log_2(\frac{\log(2/\beta)}{4\delta_0^2})$, we have $\sign(\calC_f^M(x, y))=\sign(f(y)-f(x))$ with probability at least $1-\beta$.

\subsection{Warm started line search} 
%According to Theorem~\ref{thm:MainTheorem_New}, we can use a smaller default step size in line search to obtain a lower final error. However, Algorithm~\ref{algorithm:Line Search} may repeatedly waste a lot of queries in reaching the larger optimal step sizes in the earlier stage of SCOBO if the default step size is too tiny. In this case, we can gain some efficiency by using the estimated step size from the last iteration as the initialization for the current iteration. This saves a large number of queries if the optimal step sizes do not change rapidly between iterations. With the warm start, we must also include a mechanism to reduce step size from the initial $\alpha$ since it can be longer than the optimal step size in the new iteration. Nevertheless, Theorem~\ref{thm:MainTheorem_New} still stands, so we have the ability to set a smaller default step size for improving the accuracy of SCOBO. The warm started inexact line search is summarized as Algorithm~\ref{algorithm:Warm Started Line Search} in Appendix~\ref{app:warm started}.

\begin{algorithm}[h]
\caption{Warm started inexact line search}
\begin{algorithmic}[1]
\STATE {\bfseries Input:} $\bfx$: current point; ${\hat{\bfg}}_{k}$: estimated gradient; $\alpha$: initial step size; $\alpha_{\textrm{def}}$: default step size; $M$: number of trials for comparison; $\omega$: confidence parameter; $\psi$: searching parameter.
\IF{$\calC_f^M(\bfx, \bfx+\alpha {\hat{\bfg}}_{k} )\leq-\omega$}
    \WHILE{$\calC_f^M(\bfx+\alpha {\hat{\bfg}}_{k}, \bfx + \psi \alpha {\hat{\bfg}}_{k} )\leq-\omega$}
    \STATE $\alpha= \psi\alpha$
    \ENDWHILE
\ELSIF{$\calC_f^M(\bfx, \bfx+\alpha {\hat{\bfg}}_{k} ) \geq \omega$}
    \WHILE{$\calC_f^M(\bfx, \bfx + \psi^{-1} \alpha {\hat{\bfg}}_{k} )\geq\omega$ \AND $\alpha>\alpha_{\textrm{def}}$}
    \IF{$\psi^{-1}\alpha<\alpha_{\textrm{def}}$}
        \STATE $\alpha=\alpha_{\textrm{def}}$
    \ELSE
        \STATE $\alpha= \psi^{-1}\alpha$
    \ENDIF
    \ENDWHILE
\ENDIF
\STATE {\bfseries Output:} $\alpha$
\end{algorithmic}
\label{algorithm:Warm Started Line Search}
\end{algorithm}

%Extended from Section~\ref{section:Line_search}, 
We introduce a warm started inexact line search method for SCOBO. The vanilla inexact line search, {\em i.e.} Algorithm~\ref{algorithm:Line Search}, starts its step size searching from $\alpha = \alpha_{\mathrm{def}}$ at every iteration of SCOBO. Although $\alpha$ converges to the interval $(\alpha_\star/2,\alpha_\star]$ exponentially, it may still waste unnecessary effort in the case that optimal step sizes do not change  much between consecutive iterations. Especially, when $\alpha_\star/\alpha_{\mathrm{def}}$ is larger, a noticeable difference, in terms of the number of comparison oracle queries, can be observed if we do not restart the line search all over every iteration. 

In the warm started inexact line search, we use the estimated step size from the last iteration of SCOBO as the initialization for the new line search. Since the warm started initialization can be larger than the optimal step size at the current iteration, we must also include a mechanism to reduce step size from the initial $\alpha$. We first use the $M$-trial comparison oracle to determine if we want to extend or reduce the initial step size with confidence. We will keep the initial step size if the confidence is mediocre in both directions.  Once decided, we keep extending/reducing the step size by a factor of $\psi$ until the stopping condition is satisfied. 

By Theorem~\ref{thm:FormalMainTheorem_2}, we can use a smaller minimum step size for targeting a better error bound, so the final convergence accuracy of SCOBO with warm started line search is better than SCOBO with bigger fixed step size. Though SCOBO with vanilla line search can achieve similar accuracy by setting a very small default step size, it can waste many queries on line search if the default step size is too tiny. However, if the default step size is too large, then the accuracy of SCOBO become loose. 
Overall, we claim the warm started line search with small default step size has both good convergence performance and query efficiency. We summarize the warm started inexact line search as Algorithm~\ref{algorithm:Warm Started Line Search}.

\subsection{Choosing the step size}  We have experimented with constant step size, line search and decaying step sizes. SCOBO works well in all three cases. Line search provides faster convergence for convex functions (Section~\ref{sec:Synthetic}) while using decaying step sizes provides more stable performance for highly non-convex functions such as in the MuJoCo control problems (Section~\ref{sec:Mujoco}). If ensuring descent ({\em i.e.} $f(\bfx_{k+1}) \leq f(\bfx_k)$) at every step is crucial, we recommend using a fixed small step size.

\section{Numerical experiments} \label{section:NumericalExperiments}

In this section, we demonstrate the empirical performance of SCOBO on both synthetic examples and the MuJoCo dataset \citep{todorov2012mujoco}. The codes for SCOBO can be found online:
\begin{center}
    \url{https://github.com/caesarcai/SCOBO}.
\end{center}

% \begin{figure}
% \begin{center}
% \centering
% \includegraphics[width = .33\linewidth, height = .26\linewidth]{Images/quad_flip1} \hfill%\qquad
% \includegraphics[width = .33\linewidth, height = .26\linewidth]{Images/ill_quad_flip1}\hfill
% \includegraphics[width = .33\linewidth, height = .26\linewidth]{Images/kappa1_flip1}\hfill
% \end{center}
% \caption{ Optimality gap, fraction of flipped comparison oracles {\em v.s.} number of comparison oracles used. {\bf Left:} case~\ref{case:a}. {\bf Middle:} case~\ref{case:b}. {\bf Right:} case~\ref{case:c}.}
% \label{figure:Synthetic flipping}
% \vspace{-0.05in}
% \end{figure}

% \begin{figure}
% \begin{center}
% \centering
% \includegraphics[width = .333\linewidth]{Images/quad_linesearch1}\hfill
% \includegraphics[width = .333\linewidth]{Images/ill_quad_linesearch1}\hfill
% \includegraphics[width = .333\linewidth]{Images/kappa1_linesearch1}
% \end{center}
% \caption{Convergence comparison among fixed step size, line search, and warm started line search. {\bf Left:} case~\ref{case:a}. {\bf Middle:} case~\ref{case:b}. {\bf Right:} case~\ref{case:c}.}
% \label{figure:Synthetic linesearch}
% \vspace{-0.05in}
% \end{figure}

\subsection{Synthetic examples}  \label{sec:Synthetic}
%\textbf{Synthetic examples. } \quad 
%We consider the simple quadratic minimization problem: 
%\begin{equation*}
%\minimize_{x\in\mathbb{R}^{d}} f(x) = x^{\top}Qx
%\end{equation*}

We benchmark SCOBO on four synthetic test cases: 
\begin{enumerate}[label={(\alph*)}]%, leftmargin=0.6cm]
    \item \label{case:a}We consider the skewed-quartic function used in \citep{spall2000adaptive}. We embed the $20$-dimension skewed-quartic function into $500$-dimensional space.
    The comparison oracle parameters are set to be $\kappa=1.5$, $\mu=1$ and $\delta_0=0.5$.
    \item \label{case:b}We consider the squared sum of the $20$ largest-in-magnitude elements in a $500$-dimensional vector, {\em i.e.} $f(\bfx)=\sum_{\RV{i=1}}^{20} x_{m_i}^2$ where $x_{m_i}$ is the $i$-th largest-in-magnitude entry. The comparison oracle parameters are set to be $\kappa=1.5$, $\mu=4$ and $\delta_0=0.5$.
    \item \label{case:c}We use the same objective function as in case~\ref{case:a}, but the comparison oracle parameters are set to be $\kappa=1$, $\mu=1$ and $\delta_0=0.3$. 
    \item \label{case:d}We use the same objective function as in case~\ref{case:b}, but the comparison oracle parameters are set to be $\kappa=1$, $\mu=1$ and $\delta_0=0.3$.
\end{enumerate}

All four test cases have $s=20$ and $d=500$. \RV{When $\calC_{f}(\cdot,\cdot)$ satisfies Assumption~\ref{assumption:Oracle}, ({\em i.e.} cases (c) and (d)) Theorem~\ref{thm:FormalMainTheorem_2} requires $m \approx \delta_0^{-2}s\log(2d/s) \approx 11s\log(2d/s)$, so we play it safe and choose $m=20s\log(2d/s)$. We use the same value of $m$ in cases (a) and (b).} In cases~\ref{case:a} and \ref{case:b}, the flipping probability of $\calC_f(\bfx,\bfy)$ will rise when $|f(\bfx)-f(\bfy)|$ is small, so we set a fixed sampling radius $r=1/2\sqrt{s}$ in these two cases. In contrast, the comparison oracle parameters in cases~\ref{case:c} and \ref{case:d} imply  $\mathbb{P}\left[\calC_{f}(\bfx,\bfy) = \sign\left(f(\bfy) - f(\bfx)\right)\right] = 0.8$, so the flipping probability of $\calC_{f}(\bfx,\bfy)$ is independent of $|f(\bfy)-f(\bfx)|$. Thus, we may use a smaller sampling radius of $r=10^{-4}$, which offsets the perturbation due to $\nabla^2 f(\bfx)$ (see \eqref{eq:Taylor_Expansion}).

% Cases~\ref{case:a} and \ref{case:b} have the same Lipshitz constant $L$, but the restricted-strong convexity constant $\nu$ is smaller in case~\ref{case:b}. By Theorem~\ref{thm:MainTheorem_New}, the required 1-bit gradient estimation accuracy $\eta$ is bounded by $\nu/L$, which means case~\ref{case:b} requires higher accuracy on gradient estimation than case~\ref{case:a}; therefore, case (b) is the harder problem.
% On the other hand, by setting $\kappa=1$, case~\ref{case:c} becomes a very different problem. These comparison oracle parameters imply  $\mathbb{P}\left[\calC_{f}(\bfx,y) = \sign\left(f(y) - f(\bfx)\right)\right] = 0.75$, so that the flipping probability of $\calC_{f}(\bfx,y)$ is independent from $|f(y)-f(\bfx)|$ in case~\ref{case:c}. For this reason, we can take arbitrarily small sampling radius to offset the perturbation from $\nabla^2 f$ in gradient estimation because the flipping probability is not going to blow up. 
% In contrast, the flipping probability of comparison oracles is higher than the first $2$ cases in the earlier stages of SCOBO; hence, by Theorem~\ref{theorem:Grad_Estimate_Error_Bound}, we use more comparison oracles to estimate ${\hat{\bfg}}_k$ in case~\ref{case:c}.

\begin{figure}[t]
\begin{center}
\centering
\hfill
\includegraphics[width = .45\linewidth, height = .31\linewidth]{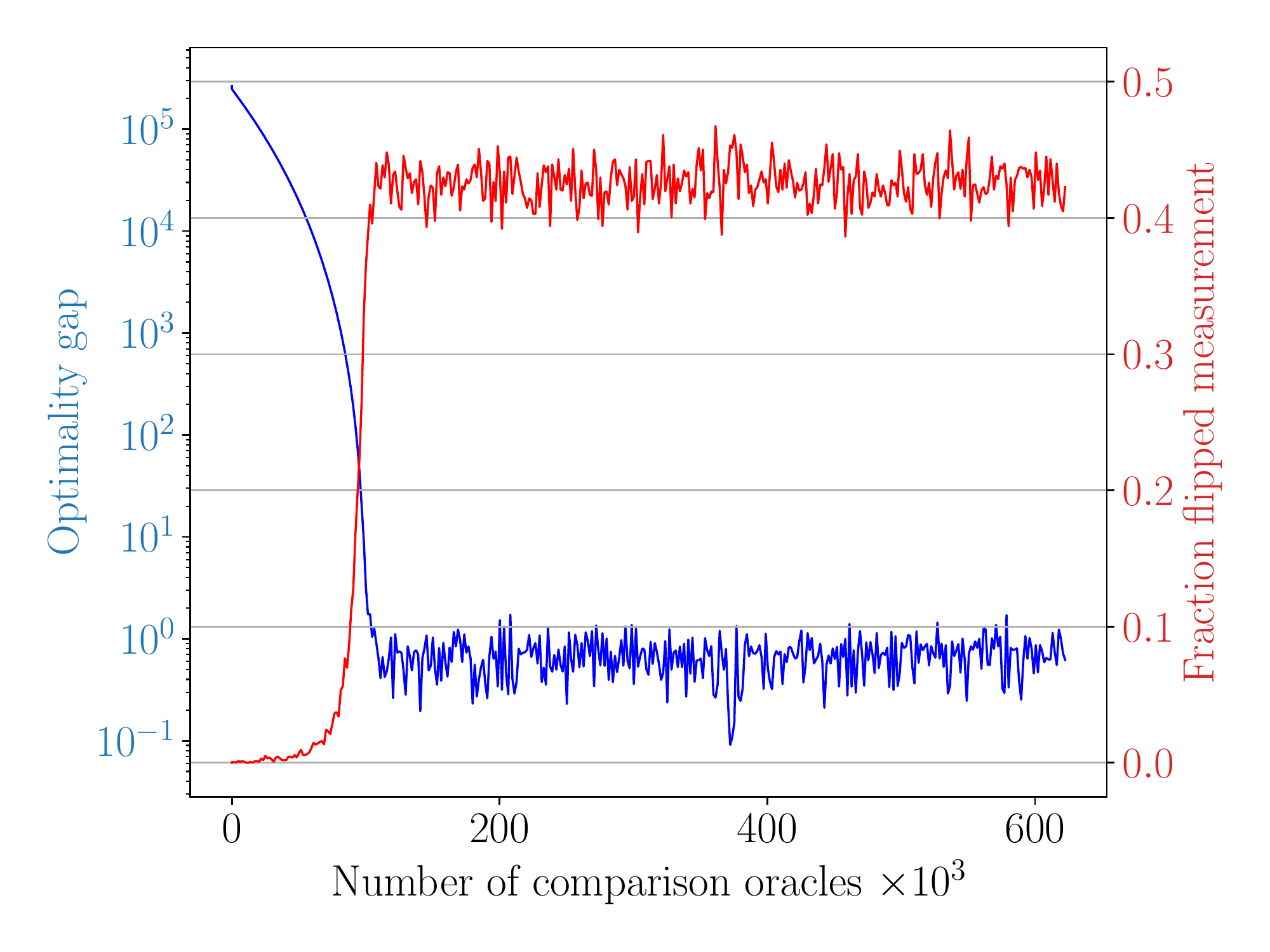}
\hfill%\qquad
\includegraphics[width = .45\linewidth, height = .31\linewidth]{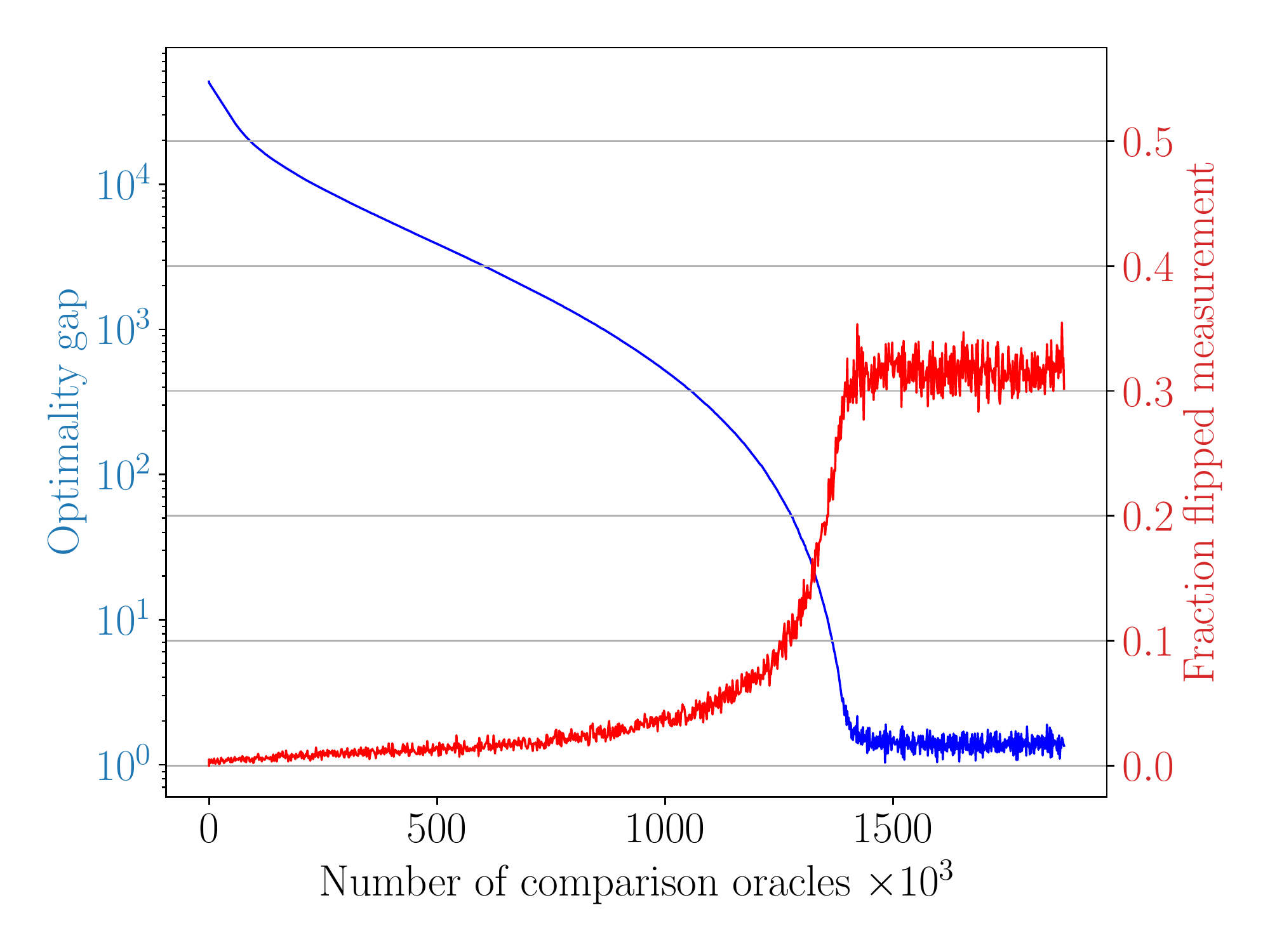} 
\hfill\\
\hfill
\includegraphics[width = .45\linewidth, height = .31\linewidth]{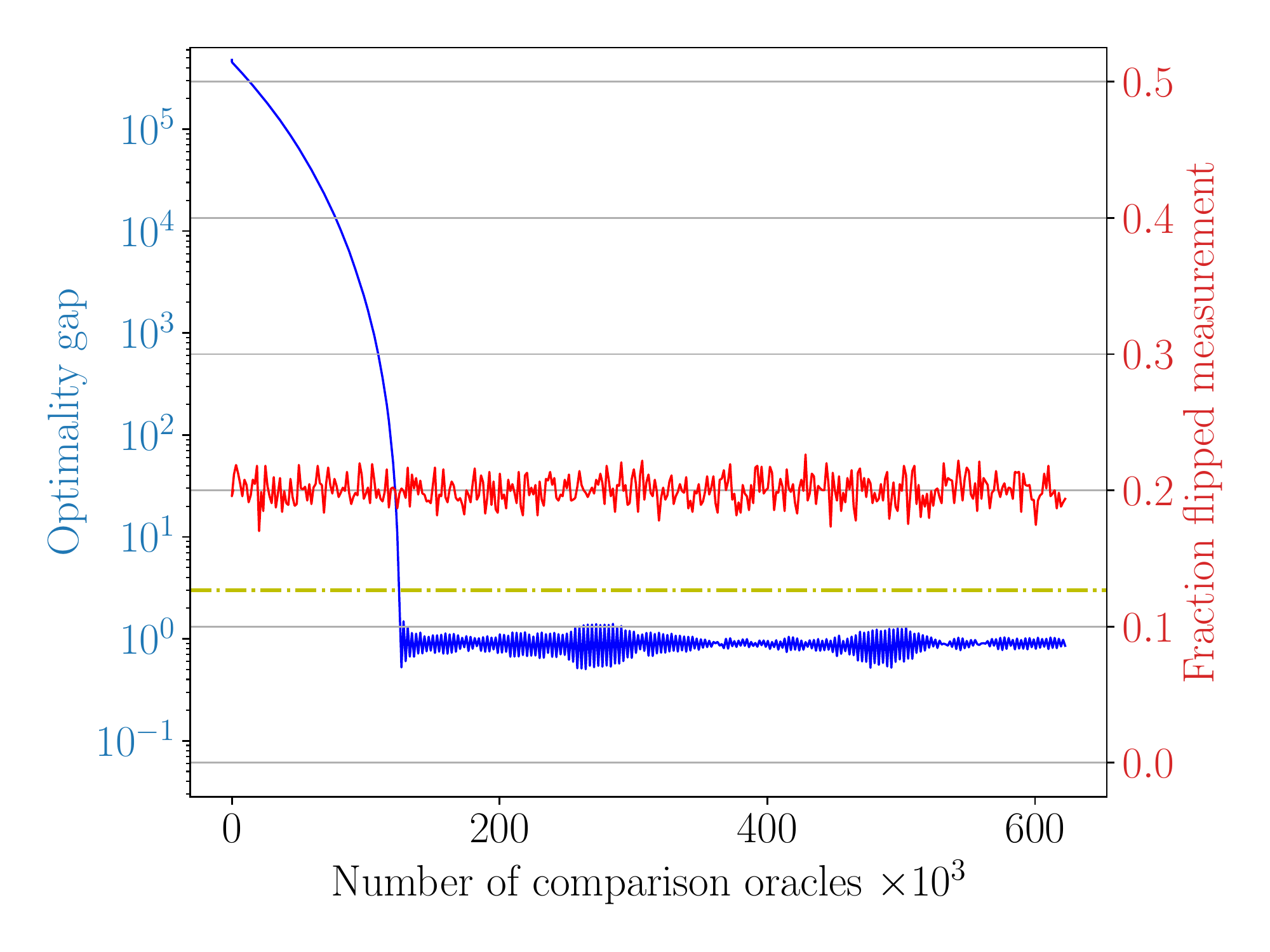}
\hfill
\includegraphics[width = .45\linewidth, height = .31\linewidth]{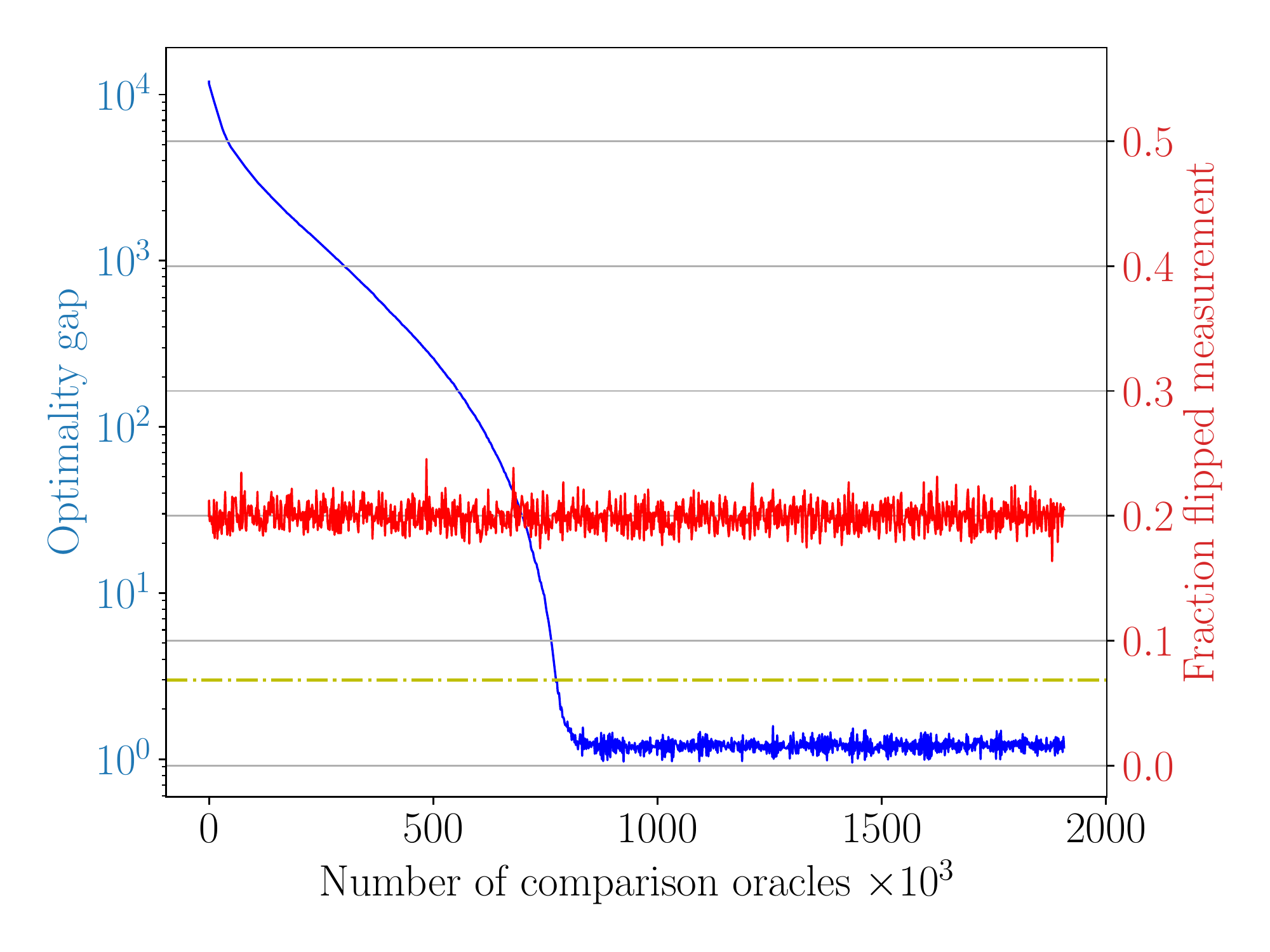}
\hfill
\end{center}
\vspace{-0.05in}
\caption{ Optimality gap, fraction of flipped comparison oracles {\em v.s.} number of comparison oracles used. {\bf Top-left:} case~\ref{case:a}. {\bf Top-right:} case~\ref{case:b}. {\bf Bottom-left:} case~\ref{case:c}. {\bf Bottom-right:} case~\ref{case:d}.}
\label{figure:Synthetic flipping}
%\vspace{-0.08in}
\end{figure}

We first numerically verify the convergence of SCOBO with fixed step size $\alpha = 2$. In Figure~\ref{figure:Synthetic flipping}, we plot the convergence trajectory of SCOBO in blue for each of the four cases. For comparison, we also plot the fraction of flipped oracle queries in red. We further plot the theoretical error bound as a horizontal yellow dash line for reference in cases~\ref{case:c} and \ref{case:d}. 

In cases~\ref{case:a} and \ref{case:b}, SCOBO converges slowly yet smoothly; meanwhile, the fraction of flipped comparison oracle queries keeps relatively low in the early stage. The number of flipped comparison increase rapidly as the optimality gap gets smaller. While the expectation of flipping probability later rises to over $40\%$ and $30\%$ respectively, SCOBO stays stably near the optimum. %under the theoretical bound. 

In cases~\ref{case:c} and \ref{case:d}, the fraction of flipped oracle queries is constantly $20\%$ in expectation. This may seem to create a harder 1-bit compressed sensing problem for Algorithm \ref{alg:1BitGradEstimator}, but this difficulty is offset by the smaller sampling radius. Hence, the trajectory of SCOBO shows smooth monotonic descent to the theoretical bound in both cases. %In fact, case~\ref{case:c} converges to a better optimality gap since it has less flipped comparison oracles in the later stages and smaller. 

Overall, we observe that SCOBO converges successfully 
%theoretical error bound
in all cases, and then remains near the optimum.
%and after reaching this bound remains underneath it. 
This verifies our convergence theorem, {\em i.e.} Theorem~\ref{thm:FormalMainTheorem_2}.

\begin{figure}[t]
\begin{center}
\centering
\hfill
\includegraphics[width = .45\linewidth, height = .31\linewidth]{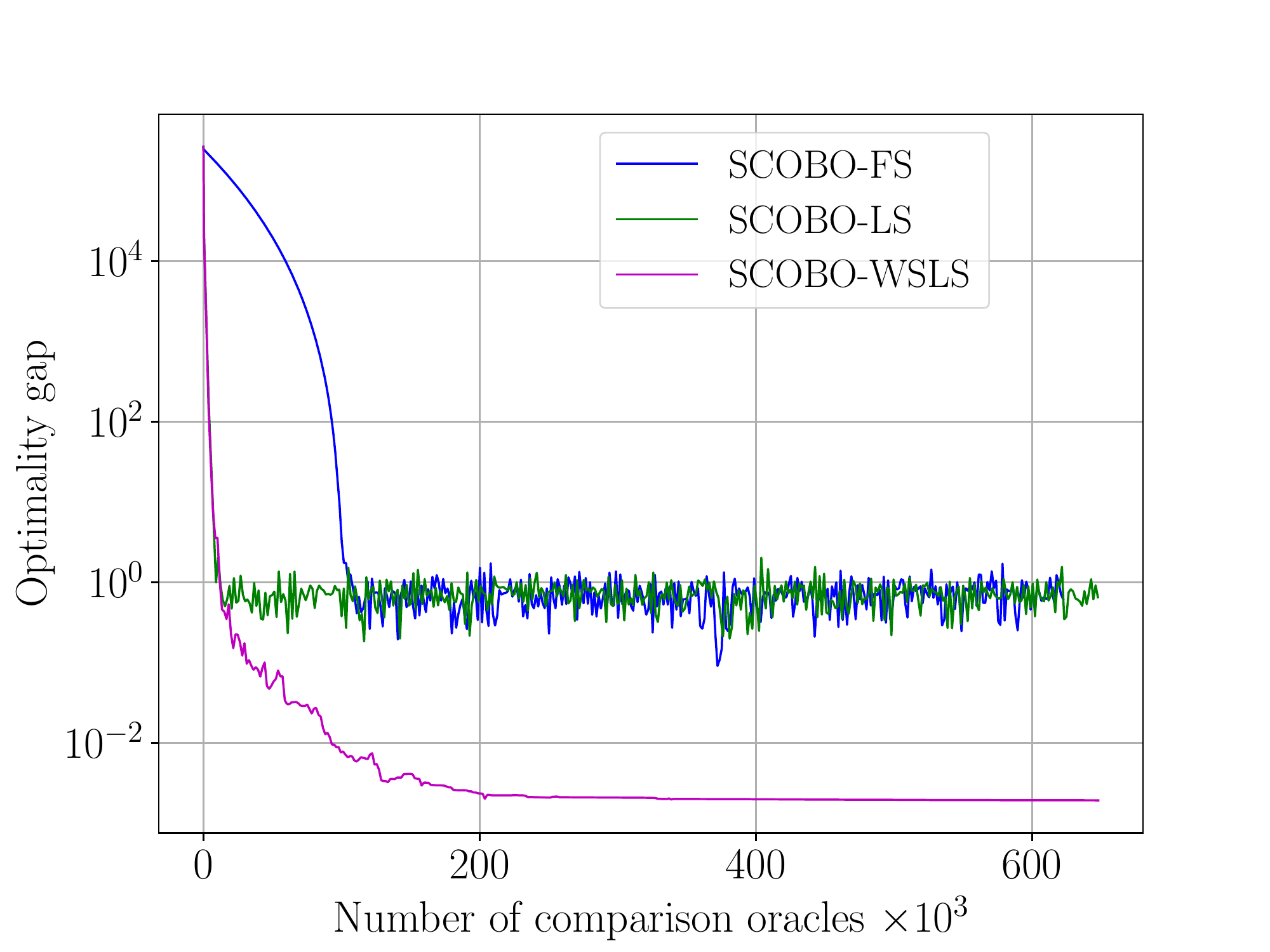} 
\hfill%\qquad
\includegraphics[width = .45\linewidth, height = .31\linewidth]{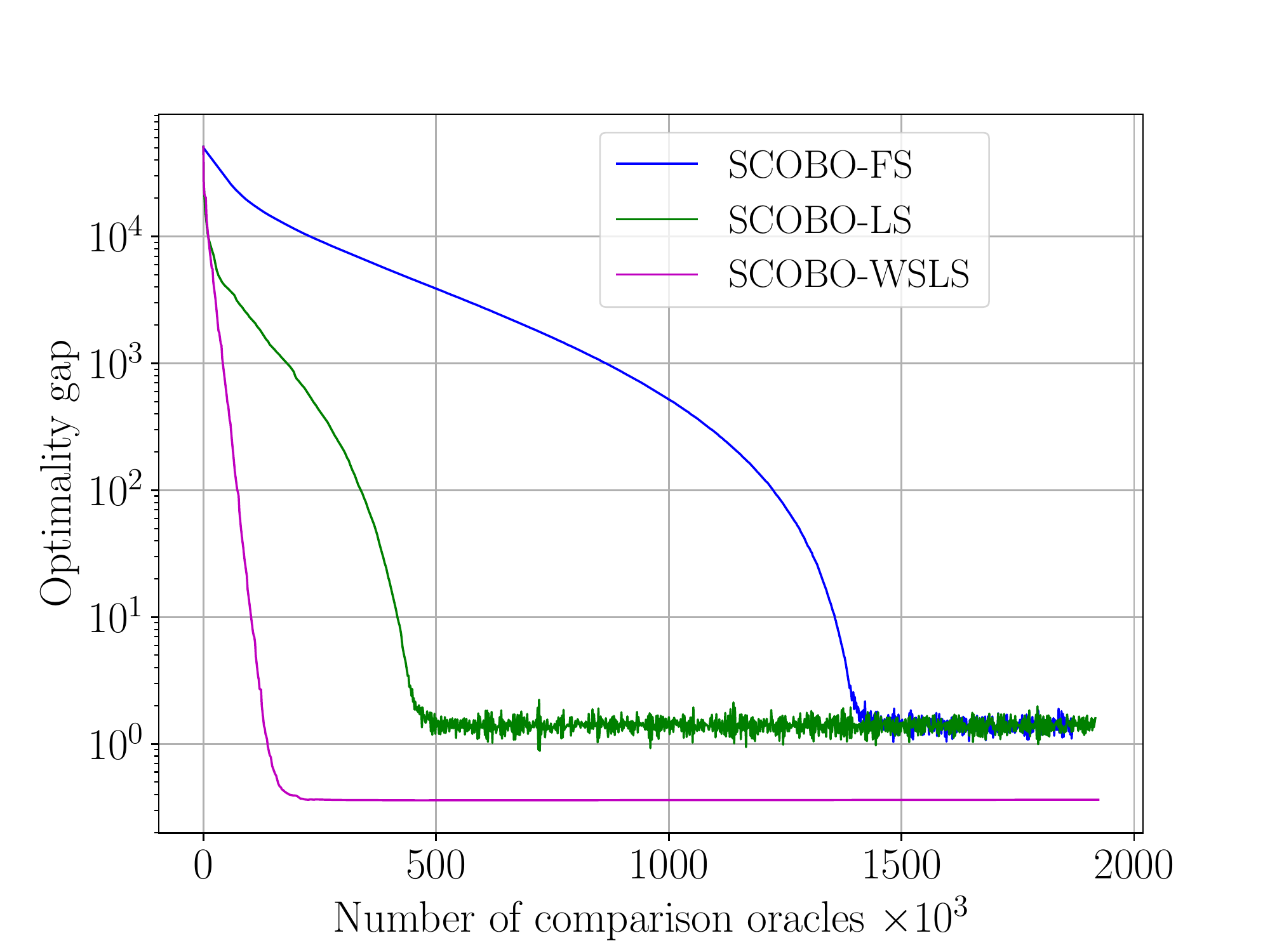} \hfill\\
\hfill
\includegraphics[width = .45\linewidth, height = .31\linewidth]{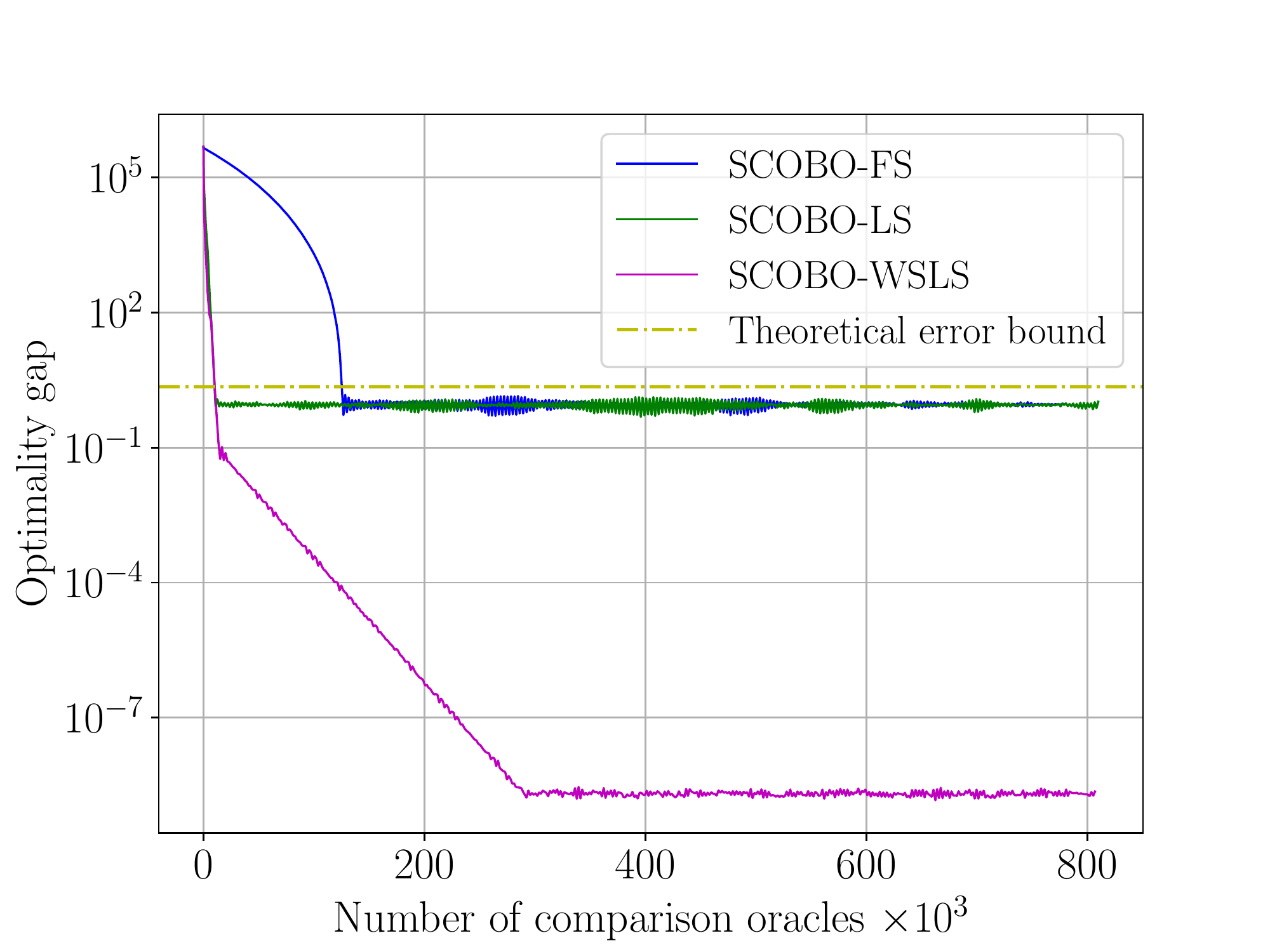}\hfill
\includegraphics[width = .45\linewidth, height = .31\linewidth]{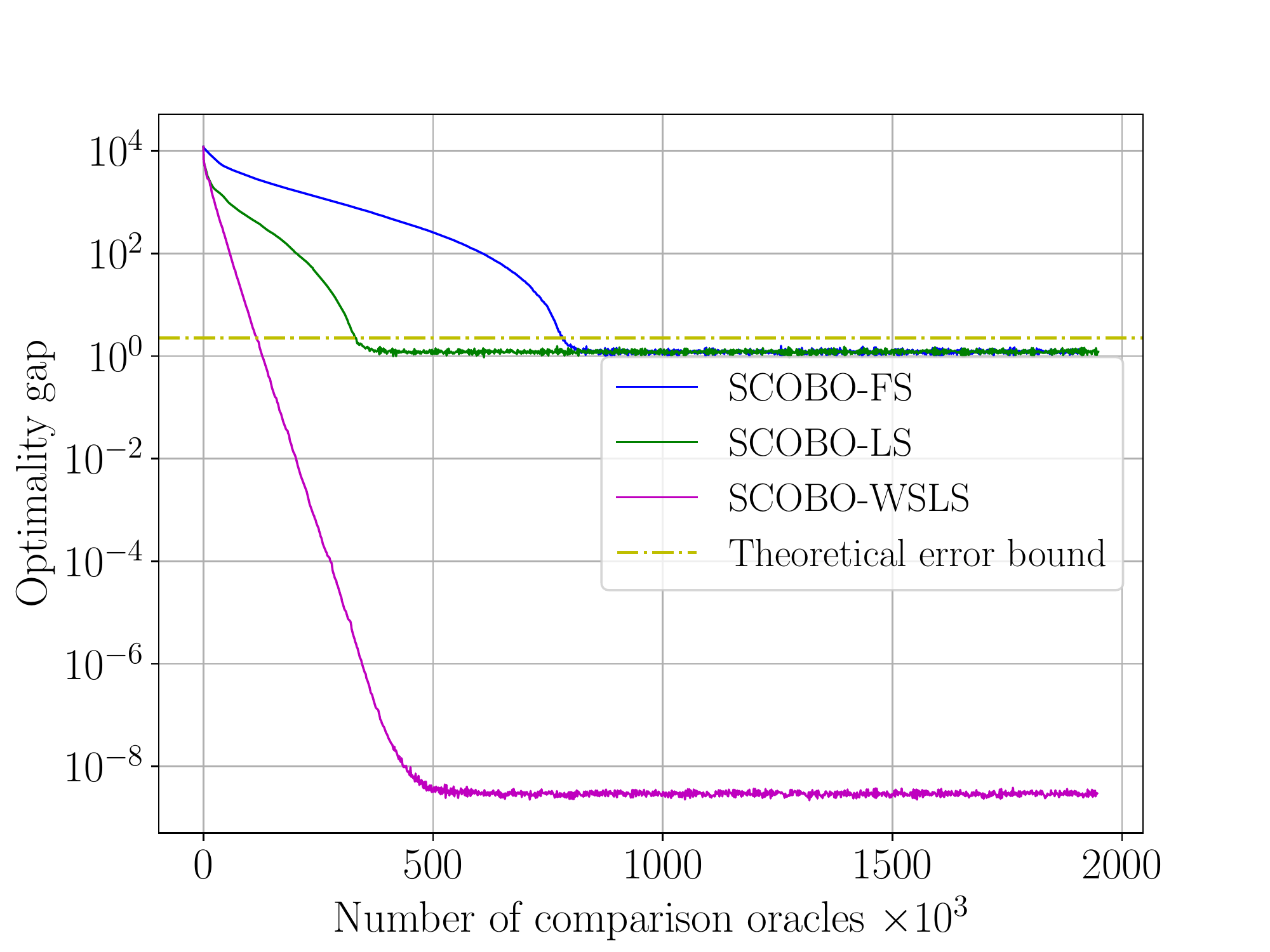}
\hfill
\end{center}
\vspace{-0.05in}
\caption{Convergence comparison between fixed step size version and line search version of SCOBO. {\bf Top-left:} case~\ref{case:a}. {\bf Top-right:} case~\ref{case:b}. {\bf Bottom-left:} case~\ref{case:c}. {\bf Bottom-right:} case~\ref{case:d}.}
\label{figure:Synthetic linesearch}
%\vspace{-0.08in}
\end{figure}

We investigate the empirical performance of different versions of SCOBO: fixed step size (SCOBO-FS), vanilla line search (SCOBO-LS) and warm started line search (SCOBO-WSLS). For all test cases, we use the default step size $\alpha_{\mathrm{def}}=2$ for SCOBO-LS and $\alpha_{\mathrm{def}}=10^{-4}$ for SCOBO-WSLS.
The line search parameters are set to be $M=40$, $\omega=0.05$ and $\psi=2$. The results are shown in Figure~\ref{figure:Synthetic linesearch}. We find both versions of inexact line search methods accelerate the convergence dramatically in all test cases. Furthermore, we see SCOBO-WSLS is able to \RV{converge} to higher accuracy since it can use a tiny default step size without wasting unnecessary queries on distant cold started line search. In summary, SCOBO can be stably accelerated with the proposed line search methods. 

Finally, we compare SCOBO against two state-of-the-art comparison oracle based optimization methods: Pairwise comparison coordinate descent (PCCD) \citep{Jamieson2012}, and SignOPT \citep{cheng2019sign}. We implemented PCCD by ourselves and hand tuned its parameters for the best performance. The code for SignOPT is obtained from the authors' website, we use the parameters suggested in the paper; in particular, we sample $200$ random directions for their gradient estimator, which is recommend by the authors. We also emphasize that we use same key parameters ({\em e.g.} sampling radius) for all three tested algorithms, so we do not gain advantage from the parameter setting.
The empirical results are summarized in Figure~\ref{figure:Synthetic comparsion}. %and we defer the parameter setting details to Appendix~\ref{app:parameter setting}. 

\begin{figure}[t]
\begin{center}
\centering
\hfill
\includegraphics[width = .45\linewidth, height = .31\linewidth]{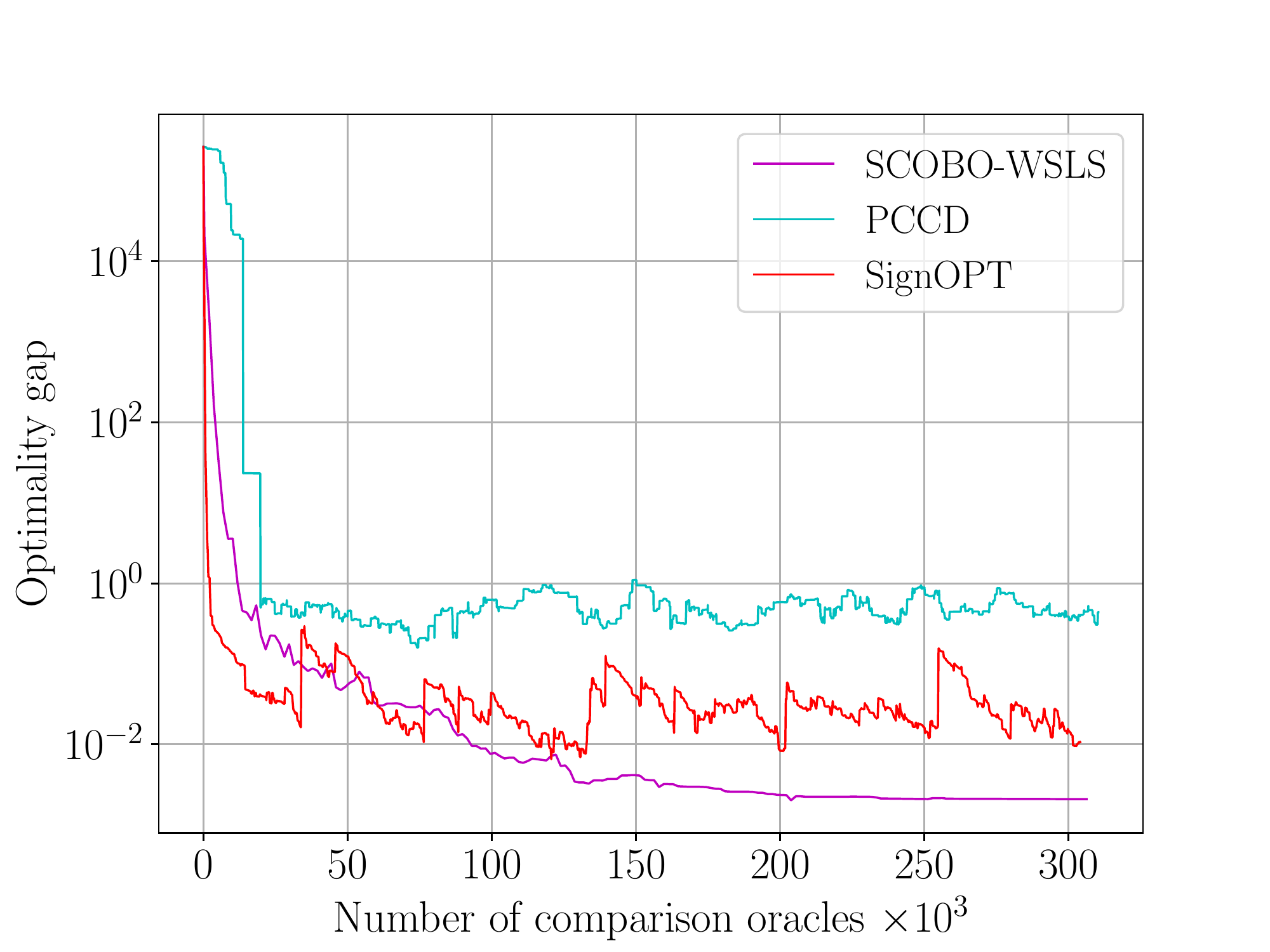} 
\hfill%\qquad
\includegraphics[width = .45\linewidth, height = .31\linewidth]{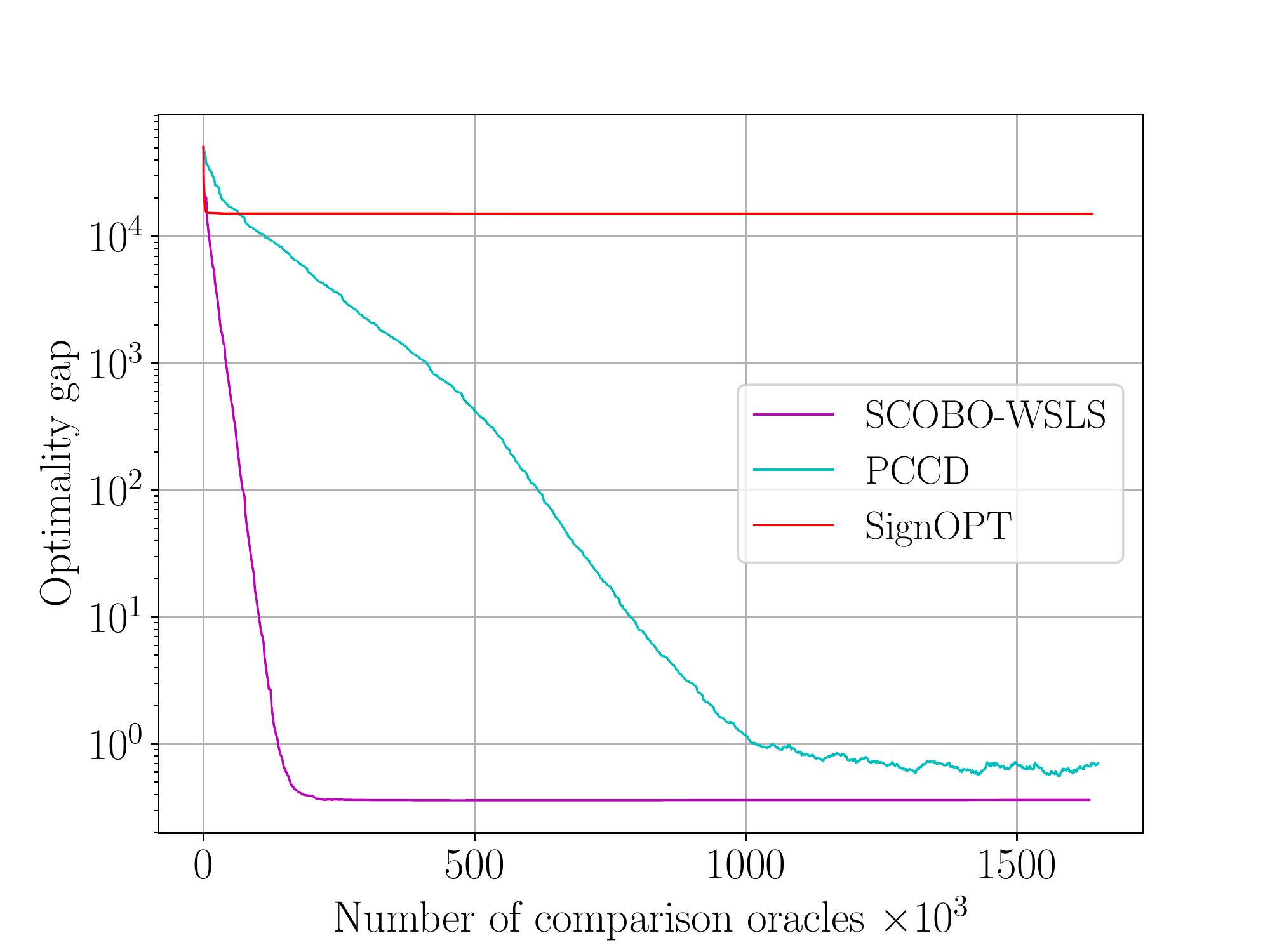} 
\hfill\\
\hfill
\includegraphics[width = .45\linewidth, height = .31\linewidth]{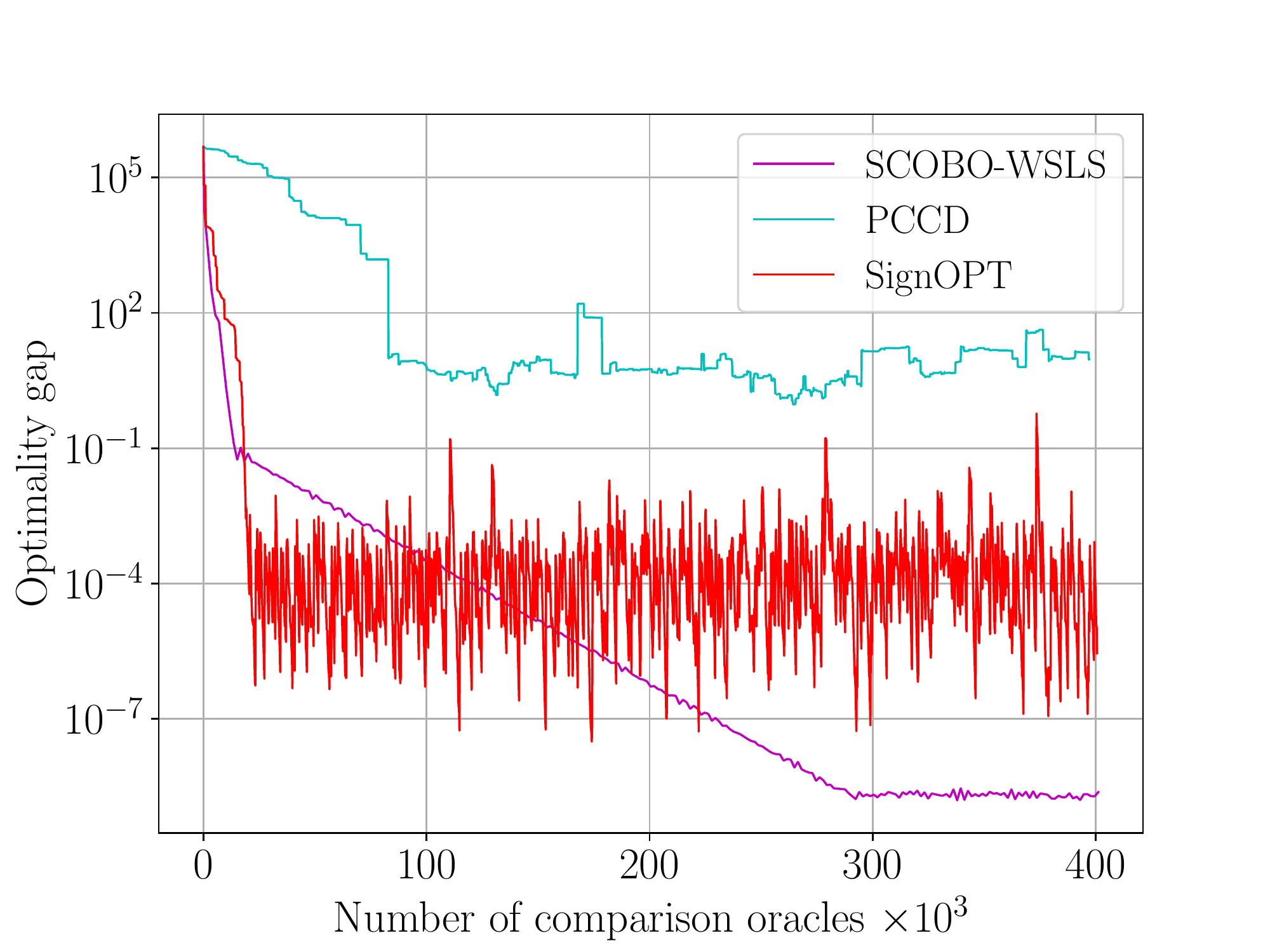}\hfill
\includegraphics[width = .45\linewidth, height = .31\linewidth]{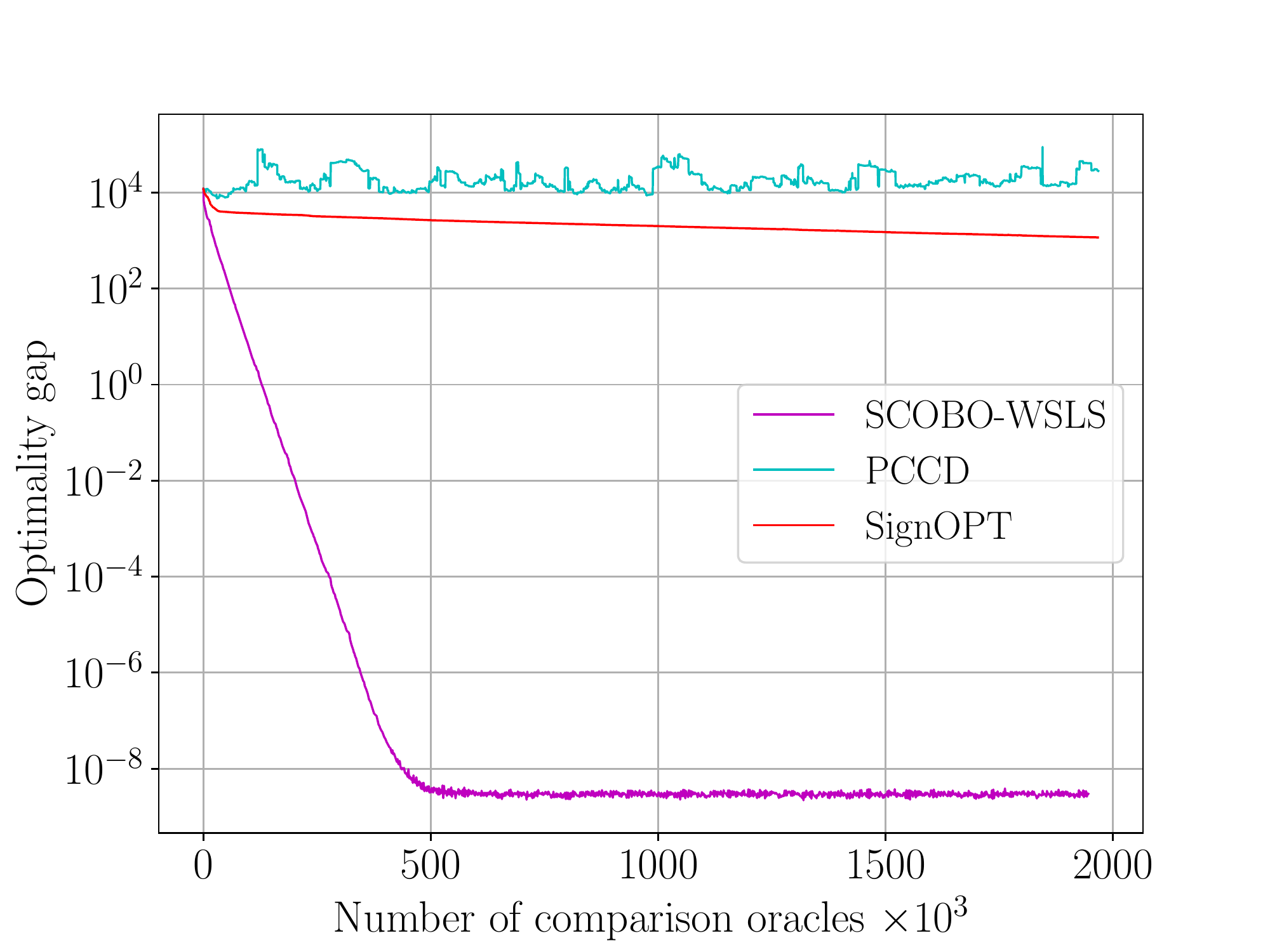}
\hfill
\end{center}
\vspace{-0.08in}
\caption{Convergence comparison among SCOBO, PCCD and SignOPT. {\bf Top-left:} case~\ref{case:a}. {\bf Top-right:} case~\ref{case:b}. {\bf Bottom-left:} case~\ref{case:c}. {\bf Bottom-right:} case~\ref{case:d}.}
\label{figure:Synthetic comparsion}
\vspace{-0.05in}
\end{figure}

SignOPT has a slight advantage in the early stage of test cases~\ref{case:a} and \ref{case:c}, but SCOBO stably converges to more accurate solutions. For the harder cases~\ref{case:b} and \ref{case:d}, SignOPT fails. Note that the support of the gradient is fixed in cases~\ref{case:a} and \ref{case:c}  while the gradient support varies in cases~\ref{case:b} and \ref{case:d}. The varying gradient support does not effect SCOBO, but it is problematic for SignOPT.

PCCD has reasonable performance in test cases~\ref{case:a} and \ref{case:b}, but fails cases~\ref{case:c} and \ref{case:d} where $\kappa=1$. This is caused by the fact that PCCD uses a $1$-trial comparison oracle for coordinate line search. When the fraction of flipped queries is constantly high this line search is unreliable. Using the $M$-trial comparison oracle \eqref{eq:m-trial} {\em could} improve the quality of line search, but is unlikely to improve the overall efficiency. This is because PCCD uses an unreliable search direction and spends all its queries on line search. Thus, using \eqref{eq:m-trial} will immediately increase the total queries $M$-fold without necessarily yielding more descent per iteration. In contrast, SCOBO starts with a very good search direction ($\approx \bfg_k$), and thus it makes sense to invest more queries in a more thorough line search. 

In conclusion, we find SCOBO has the best performance among the three tested algorithms, in terms of both query complexity and convergence stability.

%Furthermore, we provide a zoomed-in version of Figure~\ref{figure:Synthetic linesearch} in Appendix~\ref{app:warm started}, wherein a noticeable saving of queries is observed in the warm started line search over the vanilla one. 

\subsection{MuJoCo policy optimization}
\label{sec:Mujoco}

In this section, we use SCOBO to learn a policy for simulated robot control, using only comparison oracle feedback, for several problems from the MuJoCo suite of benchmarks \citep{todorov2012mujoco}. Inspired by \citep{mania2018simple}, we use a simple class of policies (linear policies) and minimal computational resources. We note that the objective functions for these problems ({\em i.e.} the reward obtained given an input policy) are highly non-convex and possess no obvious low dimensional structure. Nevertheless, SCOBO performs well. 
Our experimental setup is as follows:
\begin{itemize}
\item We use a horizon of 1000 iterations for each rollout.
\item The only access to the reward function was through a poynomial comparison oracle with $\kappa = 2,\mu = 0.5$ and $\delta_0 = 0.3$.
\item The values of $m$ and $s$ for each experiment are displayed in Table \ref{table:vals_for_MuJoCo}. Note that these values are somewhat arbitrary; empirically we observed good performance for a broad range of $m$ and $s$ values.
\item We do not use line search. Instead, we implement an exponentially decaying learning rate schedule.
\item We use the state normalization/whitening trick introduced in \citep{mania2018simple} to encourage more equal exploration across dimensions.
\end{itemize}

\begin{table}[t]
\caption{Parameters for SCOBO applied to MuJoCo. Note that for each model, Dim.~is the dimension of the action space times the dimension of the observation space.} \label{table:vals_for_MuJoCo}
\vspace{-0.1in}
\center
\begin{tabular}{c  c c c}
\toprule
    Model & Dim.  & $m$  & $s$ \\
    \midrule
     Swimmer-v2 & $16$  & $10$ & $5$ \\
     \hline
     Reacher-v2 & $22$  & $26$ & $16$ \\
     \hline
     HalfCheetah-v2 & $102$ & $100$ & $50$ \\
     \bottomrule
\end{tabular}
%\vspace{-0.05in}
\end{table}

On all our tests, the mean rewards eventually exceed the reward threshold specified in the OpenAI Gym environment. Compared with reinforcement learning and gradient estimation approaches in the literature, SCOBO on Swimmer-v2 and Reacher-v2 yields surprisingly competitive convergence in terms of number of queries required. Note that typical approaches to reinforcement learning receive the reward function value, encoded as a 32-bit float, upon each query. In contrast, SCOBO only receives 1 bit per query. When performance is measured as {\em the number of bits required to exceed the reward threshold}, the performance of SCOBO exceeds that of the state of the art. For example, TD3 and CEM-TD3 \citep{pourchot2018cemrl} require roughly $3.2$ million bits for HalfCheetah-v2, whereas SCOBO requires only around $400$ thousand bits.

\begin{figure}[t]
\centering
\hfill
\subfloat{\includegraphics[width=.333\linewidth]{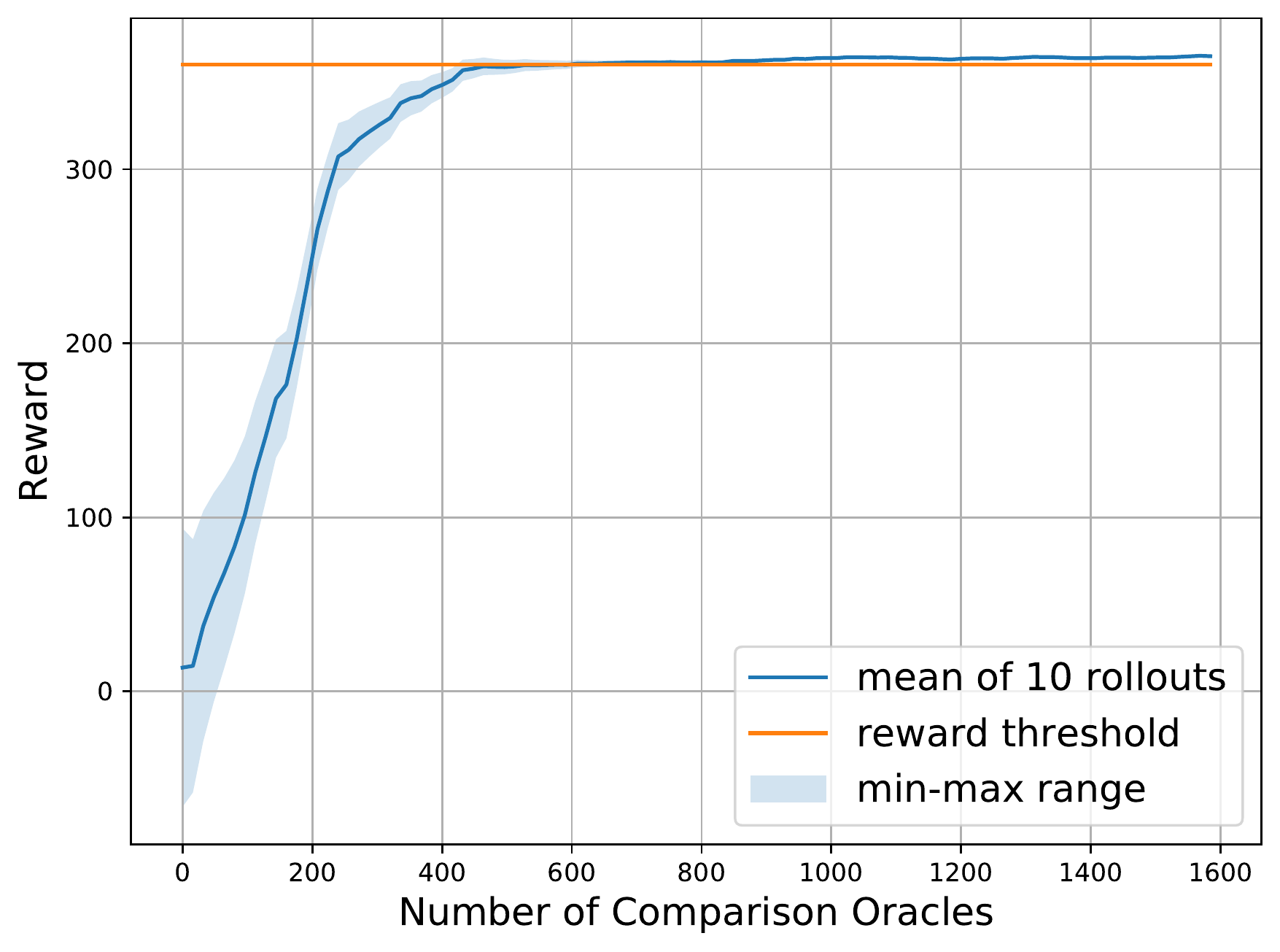}}\hfill
\subfloat{\includegraphics[width=.333\linewidth]{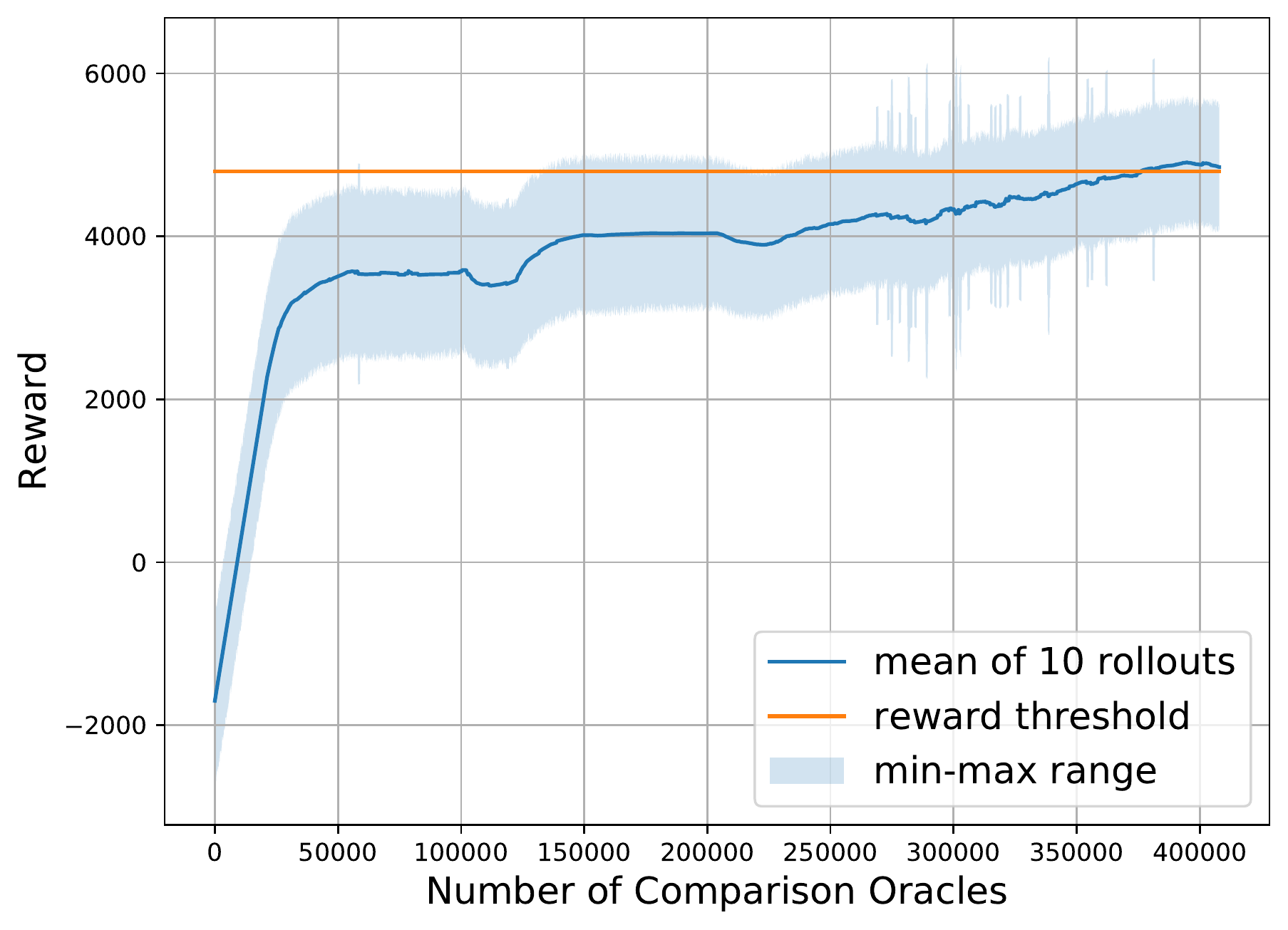}}\hfill
\subfloat{\includegraphics[width=.333\linewidth]{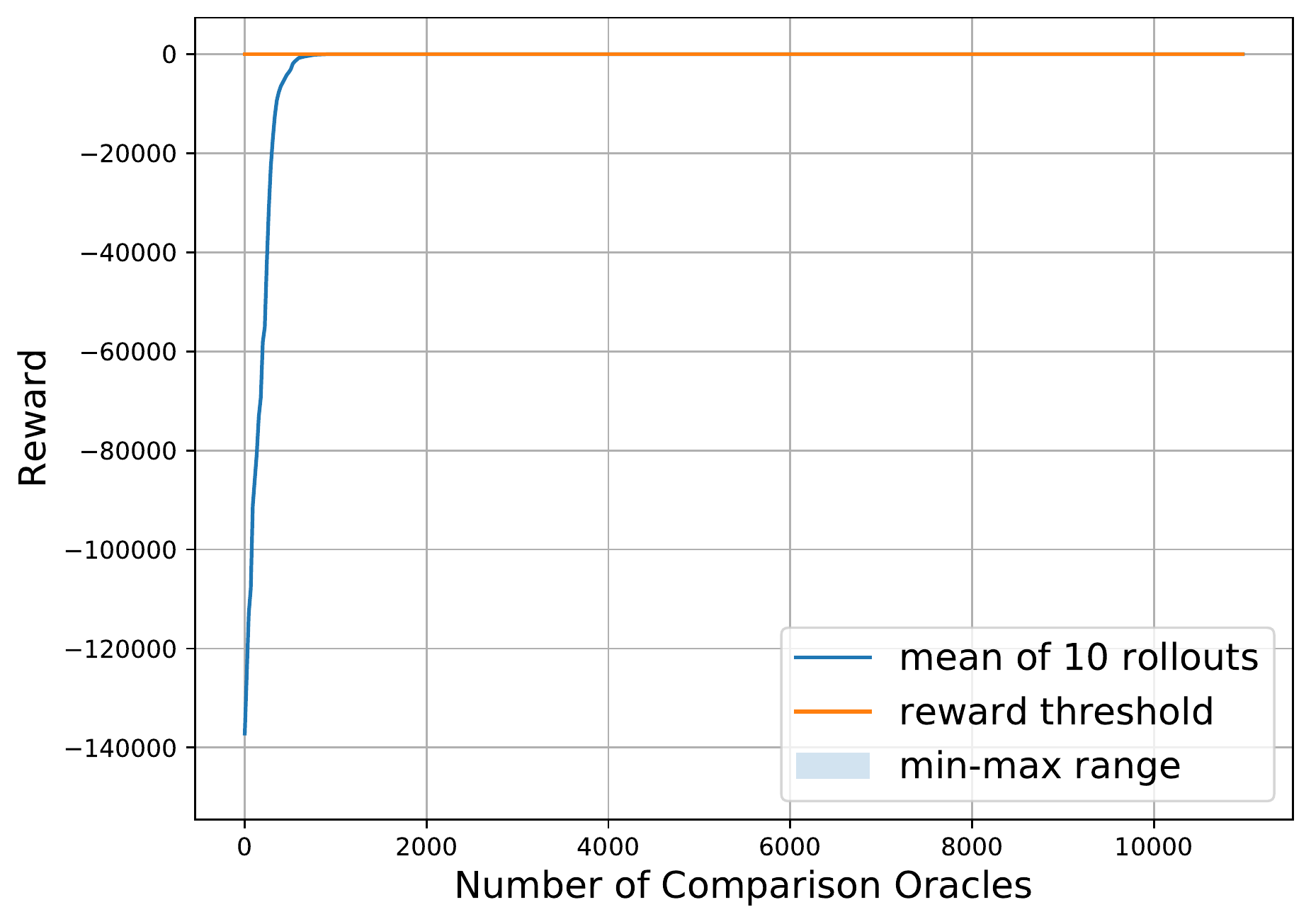}} 
\hfill
%\subfloat{\includegraphics[width=.245\linewidth]{reacher.pdf}} 
\vspace{-0.05in}
\caption{Rewards {\em v.s.} number of comparison oracles. Blue solid lines and shaded regions represent mean and +/- sigma of rewards.
{\bf Left:} Swimmer-v2. {\bf Middle:} HalfCheetah-v2. {\bf Right:} Reacher-v2. } \label{fig:mojoco}
%\vspace{-0.08in}
\end{figure}

\section{Conclusion}
\label{sec:Conclusion}
In this paper, we have explored an intriguing connection between two seemingly unrelated areas: comparison-based optimization and one-bit compressed sensing. We have shown, theoretically and experimentally, that by importing tools from one-bit compressed sensing to optimization one can design a faster and more efficient algorithm capable of exploiting the compressible structure of the objective function's gradients. We think that this gradient-as-signal paradigm holds great promise, and that future work might consider applying further techniques from signal processing to zeroth-order optimization. For example, if $\nabla^{2}f(\bfx)$ is low-rank (as is frequently assumed) could one use tools from matrix recovery to approximate it and incorporate this into an optimization scheme?

\bibliographystyle{unsrt}
\bibliography{ZerothOrderBib}

\appendix

\section{Additional Proofs}
\label{sec:AdditionalProofs}
Here, we provide proofs and supporting lemmas for the results of Sections~\ref{section:INGD} and \ref{sec:SCOBO}.

\subsection{Proofs for Section~\ref{section:INGD}}
\label{app:Proofs_INGD}
Throughout this section, we assume $\bfx_{k+1} = \bfx_{k} - \alpha \hat{\bfg}_k$ where $\hat{\bfg}_k \approx \frac{\bfg_k}{\|\bfg_k\|_{2}}$. Before proceeding, it is convenient to introduce the following notation:
\begin{align*}
 \mathbf{e}_k & := \frac{\bfg_k}{\|\bfg_k\|_{2}} - \hat{\bfg}_k, \\
 \Delta_k &= \|\bfx_k - \proj_{\star}(\bfx_k)\|_{2}.
\end{align*}
We shall use the following inequality repeatedly, so we isolate it as a lemma:

\begin{lemma} \label{lemma:Consequence_of_RSC}
Suppose that $f(\bfx) \in \mathcal{F}_{L,\nu,d}$ and $\|\bfe_k\|_{2} \leq \eta < \nu/L$. Then \RV{either $\Delta_{k+1} < \alpha\eta$ or
\begin{equation*}
\left(\Delta_{k+1} - \alpha\eta\right)^{2} \leq \Delta_k^{2} - \frac{\alpha\nu}{L}\Delta_k + \alpha^{2}.
\end{equation*}
}
\end{lemma}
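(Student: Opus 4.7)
\emph{Approach.} The plan is to control $\Delta_{k+1}$ by comparing $\bfx_{k+1}$ with $\proj_\star(\bfx_k)$ (which by definition of projection is no further than $\proj_\star(\bfx_{k+1})$), and, crucially, to peel off the gradient-estimation error $\bfe_k := \bfg_k/\|\bfg_k\|_2 - \hat{\bfg}_k$ via the triangle inequality \emph{before} squaring. This placement of operations is what produces the $\alpha\eta$ correction on the $\Delta_{k+1}$ side of the final bound, rather than an $\alpha\eta\Delta_k$ term on the $\Delta_k$ side, as a naive expansion would give.

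\emph{Step 1 (Peel off the error).} Substituting $\hat{\bfg}_k = \bfg_k/\|\bfg_k\|_2 - \bfe_k$ into the update and using $\Delta_{k+1} \leq \|\bfx_{k+1} - \proj_\star(\bfx_k)\|_2$ together with the triangle inequality and $\|\bfe_k\|_2 \leq \eta$,
\[
\Delta_{k+1} - \alpha\eta \;\leq\; \left\|\bfx_k - \alpha\tfrac{\bfg_k}{\|\bfg_k\|_2} - \proj_\star(\bfx_k)\right\|_2.
\]

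\emph{Step 2 (Case split, square, and use restricted-strong convexity).} If $\Delta_{k+1} - \alpha\eta < 0$ the first case of the lemma holds and we are done. Otherwise both sides above are non-negative and we may square:
\[
(\Delta_{k+1}-\alpha\eta)^2 \;\leq\; \Delta_k^2 - \frac{2\alpha\,\langle \bfg_k, \bfx_k - \proj_\star(\bfx_k)\rangle}{\|\bfg_k\|_2} + \alpha^2.
\]
Combining the restricted-secant inequality \eqref{eq:Restricted_Secant}, $\langle \bfg_k, \bfx_k - \proj_\star(\bfx_k)\rangle \geq (\nu/2)\Delta_k^2$, with $L$-Lipschitz differentiability and $\nabla f(\proj_\star(\bfx_k)) = 0$ (which yields $\|\bfg_k\|_2 \leq L\Delta_k$), gives
\[
\frac{\langle \bfg_k, \bfx_k - \proj_\star(\bfx_k)\rangle}{\|\bfg_k\|_2} \;\geq\; \frac{\nu}{2L}\Delta_k,
\]
and substituting back produces the advertised inequality. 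The degenerate case $\Delta_k = 0$ (so $\bfg_k = 0$ by \eqref{eq:GEB}) is handled directly: the right-hand side of the target bound reduces to $\alpha^2$, while $\Delta_{k+1} \leq \alpha\|\hat{\bfg}_k\|_2 \leq \alpha$ by the $\|\hat{\bfg}\|_2 \leq 1$ constraint in \eqref{eq:QuadProg}.

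\emph{Main obstacle.} The principal technical point is the ordering of the triangle inequality and the squaring. Squaring first and then applying Cauchy--Schwarz to the cross-term $2\alpha\langle \bfe_k, \bfx_k - \proj_\star(\bfx_k)\rangle$ produces the weaker bound $\Delta_{k+1}^2 \leq \Delta_k^2 - (\alpha\nu/L)\Delta_k + 2\alpha\eta\Delta_k + \alpha^2$, in which the $\alpha\eta$ correction multiplies $\Delta_k$ rather than appearing as a $-\alpha\eta$ shift of $\Delta_{k+1}$. Peeling off $\alpha\bfe_k$ via the triangle inequality \emph{before} squaring is the one manoeuvre that places the $\alpha\eta$ correction on the correct side; the rest of the argument is then just restricted-secant combined with Lipschitz smoothness.
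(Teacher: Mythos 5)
Your proof is correct and follows essentially the same route as the paper's: bound $\Delta_{k+1}$ by the distance to $\proj_\star(\bfx_k)$, peel off $\alpha\bfe_k$ by the triangle inequality \emph{before} squaring (exactly the manoeuvre the paper uses to place the $\alpha\eta$ shift on the $\Delta_{k+1}$ side), then apply the restricted-secant inequality \eqref{eq:Restricted_Secant} together with $\|\bfg_k\|_2 \le L\Delta_k$. Your explicit treatment of the degenerate case $\Delta_k=0$ is a small tidy addition the paper omits, but it does not change the argument.
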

\begin{proof}
\RV{If $\Delta_{k+1} \le \alpha\eta$ we are done, so suppose $\Delta_{k+1} \geq \alpha\eta$.} Observe that:
\begin{align*}
    \Delta_{k+1} & = \|\bfx_{k+1} - \proj_{\star}(\bfx_{k+1})\|_{2} \\
    & \leq \|\bfx_{k+1} - \proj_{\star}(\bfx_{k})\|_{2} \\
    & = \|\bfx_{k} - \alpha \hat{\bfg}_k - \proj_{\star}(\bfx_{k})\|_{2} \\
    &  = \|\bfx_k - \alpha\left(\frac{\bfg_k}{\|\bfg_k\|_{2}} \RV{-} \mathbf{e}_k\right) - \proj_{\star}(\bfx_k)\|_{2} \\
    & \leq \|\bfx_k - \alpha\frac{\bfg_k}{\|\bfg_k\|_{2}}- \proj_{\star}(\bfx_k)\|_{2} + \alpha\|\mathbf{e}_k\|_{2}\\
    & \leq \|\bfx_k - \alpha\frac{\bfg_k}{\|\bfg_k\|_{2}}- \proj_{\star}(\bfx_k)\|_{2} + \alpha\eta.
\end{align*}
\RV{Because $\Delta_{k+1} \geq \alpha\eta$ we have
\begin{equation}
    0 \leq \Delta_{k+1} - \alpha\eta \leq \|\bfx_k - \alpha\frac{\bfg_k}{\|\bfg_k\|_{2}}- \proj_{\star}(\bfx_k)\|_{2}
    \label{eq:Equation_19}
\end{equation}
and squaring both sides we obtain
\begin{equation}
\left(\Delta_{k+1} - \alpha\eta\right)^2 \leq \|\bfx_k - \alpha\frac{\bfg_k}{\|\bfg_k\|_{2}}- \proj_{\star}(\bfx_k)\|_{2}^2.
\label{eq:SquareBothSides}
\end{equation}}
We now handle the term on the right-hand side:
\begin{align}
 \|\bfx_k - \alpha\frac{\bfg_k}{\|\bfg_k\|_{2}}- \proj_{\star}(\bfx_k)\|_{2}^{2}   &= \|\bfx_k - \proj_{\star}(\bfx_k)\|_{2}^{2} -2\left\langle \alpha\frac{\bfg_k}{\|\bfg_k\|_{2}},\bfx_k - \proj_{\star}(\bfx_k)\right\rangle  + \alpha^{2}\left\|\frac{\bfg_k}{\|\bfg_k\|_{2}}\right\|_{2}^{2} \cr
    & \stackrel{(a)}{\leq}  \|\bfx_k - \proj_{\star}(\bfx_k)\|_{2}^{2} - \RV{\frac{\alpha}{\|\bfg_k\|_{2}}\nu}\|\bfx_k - \proj_{\star}(\bfx_k)\|_{2}^{2} + \alpha^{2} \cr
    & \stackrel{(b)}{\leq} \|\bfx_k - \proj_{\star}(\bfx_k)\|_{2}^{2} -\frac{\alpha\nu}{L}\|\bfx_k - \proj_{\star}(\bfx_k)\|_{2} + \alpha^2 \cr
    & = \Delta_{k}^{2} - \frac{\alpha\nu}{L}\Delta_k + \alpha^{2} , \label{eq:Bound_with_Delta_k}
\end{align}
where in (a) we have used the fact that restricted $\nu$-strong convexity implies \eqref{eq:Restricted_Secant} while in (b) we have used $L$-Lipschitz differentiability: $\|\bfg_k\|_{2} \leq L\|\bfx_k - \proj_{\star}(\bfx_k)\|_{2} \Rightarrow \frac{\|\bfx_k - \proj_{\star}(\bfx_k)\|_{2}}{\|\bfg_k\|_{2}} \geq \frac{1}{L}$. Combining equations \eqref{eq:SquareBothSides} and \eqref{eq:Bound_with_Delta_k} completes the proof.
\end{proof}

It is interesting to determine when Lemma~\ref{lemma:Consequence_of_RSC} guarantees descent {\em i.e.} $\Delta_{k+1} \leq \Delta_{k}$.

\begin{lemma}
\label{lemma:GuaranteeDescent}
Suppose that $f(\bfx) \in \mathcal{F}_{L,\nu,d}$ and $\|\bfe_k\|_{2} \leq \eta < \nu/(2L)$. Then $\Delta_{k+1}\leq \Delta_{k}$ as long as $\Delta_{k} \geq \alpha\rho^{\star}/2$ where 
\begin{equation}
    \RV{\rho^{\star} = \frac{1-\eta^{2}}{\frac{\nu}{2L} - \eta}}.
\end{equation}
\end{lemma}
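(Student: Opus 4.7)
The natural approach is to invoke the previous lemma (Lemma~\ref{lemma:Consequence_of_RSC}) which already bundles all of the work with restricted strong convexity and Lipschitz differentiability. That lemma gives a dichotomy: either $\Delta_{k+1} < \alpha\eta$, or
\begin{equation*}
(\Delta_{k+1} - \alpha\eta)^2 \leq \Delta_k^2 - \frac{\alpha\nu}{L}\Delta_k + \alpha^2.
\end{equation*}
I would handle each branch separately and show that in both branches the hypothesis $\Delta_k \geq \alpha\rho^{\star}/2$ forces $\Delta_{k+1} \leq \Delta_k$.

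\textbf{Branch 1 ($\Delta_{k+1} < \alpha\eta$).} Here the only thing to verify is $\alpha\eta \leq \alpha\rho^{\star}/2$, i.e.\ $2\eta \leq \rho^{\star}$. Cross-multiplying by the positive quantity $\nu/(2L)-\eta$, this becomes $2\eta(\nu/(2L) - \eta) \leq 1 - \eta^2$, i.e.\ $\eta\nu/L \leq 1 + \eta^2$, which is immediate from $\nu/L \leq 1$ and $\eta < \nu/(2L) \leq 1/2$. Combined with the hypothesis $\Delta_k \geq \alpha\rho^{\star}/2$, we conclude $\Delta_{k+1} < \alpha\eta \leq \alpha\rho^{\star}/2 \leq \Delta_k$.

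\textbf{Branch 2.} The idea is to show that the bound from Lemma~\ref{lemma:Consequence_of_RSC} can be tightened under the hypothesis $\Delta_k \geq \alpha\rho^{\star}/2$ to the form
\begin{equation*}
(\Delta_{k+1} - \alpha\eta)^2 \leq (\Delta_k - \alpha\eta)^2.
\end{equation*}
Expanding $(\Delta_k - \alpha\eta)^2$ and cancelling $\alpha^2$ terms, this is equivalent to
\begin{equation*}
\Delta_k\!\left(\frac{\nu}{L} - 2\eta\right) \geq \alpha(1 - \eta^2),
\end{equation*}
which, after dividing by the positive quantity $\nu/L - 2\eta = 2(\nu/(2L) - \eta)$, is exactly $\Delta_k \geq \alpha\rho^{\star}/2$. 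Finally, I take square roots. The left-hand side $\Delta_{k+1} - \alpha\eta$ is non-negative because we are in the branch of Lemma~\ref{lemma:Consequence_of_RSC} where inequality~\eqref{eq:Equation_19} was used, and the right-hand side $\Delta_k - \alpha\eta$ is non-negative because $\Delta_k \geq \alpha\rho^{\star}/2 \geq \alpha\eta$ by Branch~1's observation $\rho^{\star} \geq 2\eta$. Thus $\Delta_{k+1} - \alpha\eta \leq \Delta_k - \alpha\eta$, giving $\Delta_{k+1} \leq \Delta_k$.

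\textbf{Anticipated obstacle.} The only subtle point is tracking signs when passing to square roots in Branch 2; the condition $\Delta_k \geq \alpha\rho^{\star}/2$ actually does double duty, both implying $\Delta_k - \alpha\eta \geq 0$ (via $\rho^{\star}\geq 2\eta$) and providing the exact algebraic slack needed to upgrade the quadratic bound from Lemma~\ref{lemma:Consequence_of_RSC} into the cleaner $(\Delta_k - \alpha\eta)^2$ form. Once this reverse-engineering of $\rho^{\star}$ is noticed, the rest is routine algebra.
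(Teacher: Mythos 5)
Your proof is correct and follows essentially the same route as the paper's: both branches of Lemma~\ref{lemma:Consequence_of_RSC}, the completing-the-square step showing that $\Delta_k \geq \alpha\rho^{\star}/2$ is exactly what upgrades the quadratic bound to $(\Delta_{k+1}-\alpha\eta)^2 \leq (\Delta_k - \alpha\eta)^2$, and the observation $\rho^{\star}/2 \geq \eta$ to dispose of the $\Delta_{k+1} < \alpha\eta$ branch. If anything, you are slightly more careful than the paper about the sign of $\Delta_k - \alpha\eta$ when taking square roots.
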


\begin{proof}
\RV{We prove this lemma by considering two cases: $\Delta_{k+1} < \alpha\eta$ and $\Delta_{k+1} \geq \alpha\eta$. We consider the latter case first.
Assume $\Delta_{k+1} \geq \alpha\eta$ and suppose in addition we have:}
\begin{equation} \label{eq:Ineq_for_completing_square}
    - \frac{\alpha\nu}{L}\Delta_k + \alpha^2 \leq -2\alpha\eta\Delta_k + \alpha^{2}\eta^{2}.
\end{equation}
From Lemma~\ref{lemma:Consequence_of_RSC} one obtains:
\begin{align*}
 \left(\Delta_{k+1} - \alpha\eta \right)^{2} & \leq \Delta_k^{2} - \frac{\alpha\nu}{L}\Delta_k + \alpha^2 \\
 &\leq \Delta_k^{2} -2\alpha\eta\Delta_k + \alpha^{2}\eta^{2} = \left(\Delta_k - \alpha\eta\right)^{2},
\end{align*}
where we are using the fact that $\eta < \nu/(2L) \leq 1$. Hence $\Delta_{k+1}\leq \Delta_{k}$. Solving \eqref{eq:Ineq_for_completing_square} for $\Delta_k$, and assuming $\eta < \nu/(2L)$, we get the condition:
\begin{equation}
    \Delta_k \geq \frac{\alpha(1-\eta^{2})}{\frac{\nu}{L} - 2\eta} = \frac{\alpha\rho^{\star}}{2}.
    \label{eq:LowerBoundDelta}
\end{equation}
That is, \eqref{eq:LowerBoundDelta} implies \eqref{eq:Ineq_for_completing_square} which in turn implies $\Delta_{k+1} \leq \Delta_k$. Now suppose $\Delta_{k+1} < \alpha\eta$. Observe that:
\begin{equation}
    \frac{\alpha\rho^{\star}}{2} = \frac{\alpha(1-\eta^{2})}{\frac{\nu}{L} - 2\eta} \stackrel{(a)}{\geq} \frac{\alpha(1-\eta^{2})}{1 - \eta} = \frac{\alpha(1-\eta)(1+\eta)}{1 - \eta} = \alpha(1+\eta) \geq \alpha\eta ,
    \label{eq:rho_greater_than_eta}
\end{equation}
where (a) holds as $\nu/L \leq 1$. Hence if $\Delta_k \geq \alpha\rho^{\star}/2$ and $\Delta_{k+1} < \alpha\eta$ we again have $\Delta_{k+1}\leq \Delta_k$.
\end{proof}
The next lemma shows that once the iterates $\bfx_k$ are sufficently close to the set of minimizers ({\em i.e.} $\mathcal{X}$) they remain within a small neighborhood of $\bfx_k$. That is, the iterates do not ``escape''.
\begin{lemma}[No escape, after \citep{levy2016power}]
\label{lemma:No_Escape_Precise}
Suppose that $f(\bfx) \in \mathcal{F}_{L,\nu,d}$. Fix $K > 0$ and assume that $\|\bfe_k\|_{2} \leq \eta < \nu/(2L)$ for all $0\leq k \leq K-1$ satisfying $\|\Delta_k\|_2 \geq \alpha\rho^{\star}$. If, for any $k <K$, we have that $\Delta_k \leq \alpha(1+\rho^{\star})$ then:
$$
\Delta_{k+t} \leq \alpha(1+\rho^{\star}) \quad \textnormal{ for all } 0 \leq t \leq K-k,
$$
\RV{where $\rho^{\star}$ is as in Lemma~\ref{lemma:GuaranteeDescent}.}
%$$
%\rho^{\star} = \frac{\eta^{2} - \frac{2\eta\nu}{L} + 1}{\frac{\nu}{L} - \eta}.
%$$
\end{lemma}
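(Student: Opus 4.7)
The plan is to prove the claim by induction on $t$. The base case $t = 0$ is given by hypothesis, so the work is in the inductive step: assuming $\Delta_{k+t} \leq \alpha(1+\rho^\star)$, show $\Delta_{k+t+1} \leq \alpha(1+\rho^\star)$. The natural split is according to whether $\Delta_{k+t}$ is above or below the threshold $\alpha\rho^\star$ where the gradient-estimator accuracy assumption kicks in.

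\textbf{Case 1: $\Delta_{k+t} \geq \alpha\rho^\star$.} In this regime the hypothesis gives $\|\bfe_{k+t}\|_2 \leq \eta < \nu/(2L)$. Since $\alpha\rho^\star \geq \alpha\rho^\star/2$, Lemma~\ref{lemma:GuaranteeDescent} applies directly and yields $\Delta_{k+t+1} \leq \Delta_{k+t} \leq \alpha(1+\rho^\star)$.

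\textbf{Case 2: $\Delta_{k+t} < \alpha\rho^\star$.} Here we cannot invoke the error bound, but we do not need it. The key observation is that $\hat{\bfg}_{k+t}$ is produced by Algorithm~\ref{alg:1BitGradEstimator}, whose feasible set explicitly enforces $\|\hat{\bfg}\|_2 \leq 1$. Consequently $\|\bfx_{k+t+1} - \bfx_{k+t}\|_2 = \alpha\|\hat{\bfg}_{k+t}\|_2 \leq \alpha$, so by the triangle inequality (using that $\proj_\star(\bfx_{k+t})$ is a feasible competitor for $\proj_\star(\bfx_{k+t+1})$),
\begin{equation*}
\Delta_{k+t+1} \leq \|\bfx_{k+t+1} - \proj_\star(\bfx_{k+t})\|_2 \leq \|\bfx_{k+t+1} - \bfx_{k+t}\|_2 + \Delta_{k+t} < \alpha + \alpha\rho^\star = \alpha(1+\rho^\star).
\end{equation*}
Combining the two cases closes the induction.

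The only subtle point is reconciling the threshold used in Lemma~\ref{lemma:GuaranteeDescent} (namely $\alpha\rho^\star/2$) with the threshold used in this statement ($\alpha\rho^\star$); since $\alpha\rho^\star \geq \alpha\rho^\star/2$, Case~1 activates descent with room to spare, while the larger threshold in the current lemma provides the extra cushion of one step's worth of motion ($\alpha$) that is exactly the slack needed in Case~2 to keep $\Delta_{k+t+1}$ within $\alpha(1+\rho^\star)$. The only thing one must be careful about is justifying $\|\hat{\bfg}_{k+t}\|_2 \leq 1$ for the ``bad'' iterates where the error hypothesis is not assumed; this is immediate from the constraint set in \eqref{eq:QuadProg} defining the output of 1BitGradEst, and is the one place in the argument where we implicitly rely on the specific construction of the gradient estimator rather than on the generic INGD interface.
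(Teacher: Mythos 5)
Your proof is correct and follows essentially the same route as the paper's: the same case split on whether $\Delta_{k+t}$ exceeds $\alpha\rho^{\star}$, the same appeal to Lemma~\ref{lemma:GuaranteeDescent} in the first case, and the same triangle-inequality bound using $\|\hat{\bfg}_{k+t}\|_{2}\leq 1$ in the second, closed by induction. Your explicit remark that the second case needs only the norm constraint from \eqref{eq:QuadProg} rather than the error hypothesis is a welcome clarification that the paper's own write-up glosses over.
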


\begin{proof}
%First, observe that:
% \begin{equation*}
% \alpha\rho^{\star} = \alpha\frac{\eta^{2} - \frac{2\eta\nu}{L} + 1}{\frac{\nu}{L} - \eta} \stackrel{(a)}{\geq} \alpha\frac{\eta^{2} - 2\eta + 1}{\frac{\nu}{L} - \eta} \geq \frac{\alpha(1-\eta)^{2}}{2\left(\frac{\nu}{L} - \eta\right)}.
% \end{equation*}
%where (a) follows as $\nu/L \leq 1$.
\RV{By Lemma~\ref{lemma:GuaranteeDescent}}, we have that if $\Delta_k \geq \alpha\rho^{\star} \geq \alpha\rho^{\star}/2$ then $\Delta_{k+1} \leq \Delta_k$. On the other hand, if $\Delta_{k} < \alpha\rho^{\star}$ then:
\begin{align*}
\Delta_{k+1} & = \|\bfx_{k+1} - \proj_{\star}(\bfx_{k+1})\|_{2} \leq \|\bfx_{k+1} - \proj_{\star}(\bfx_{k})\|_{2}  \\
        & = \|\bfx_{k} - \alpha\hat{\bfg}_k - \proj_{\star}(\bfx_{k})\|_{2} \\
    & \leq \|\bfx_{k} - \proj_{\star}(\bfx_{k})\|_{2} + \alpha \|\hat{\bfg}_k\|_{2} \\
    &\stackrel{(a)}{\leq} \Delta_{k} + \alpha \leq \alpha(1 + \rho^{\star}),
\end{align*}
\RV{where in (a) we are using the fact $\|\hat{\bfg}_k\|_{2} \leq 1$}. Thus, we obtain:
\begin{equation*}
\Delta_{k+1} \leq \max\{\Delta_k,\alpha(1+\rho^{\star})\} \quad\textnormal{ if } \left\|\bfe_k\right\|_{2} \leq \eta .
\end{equation*}
From this, it is easy to deduce that if $\Delta_k \leq \alpha(1+\rho^{\star})$ then $\Delta_{k+1}\leq \alpha(1+\rho^{\star})$, and the lemma follows by induction.
\end{proof}
%Observe that if $\eta = 0$ then $\rho^{\star} = L/\nu$ and we recover, albeit with different notation, Lemma 6 of \citep{levy2016power}.
We now prove an elementary lemma that quantifies the {\em rate of descent} of sequences satisfying the type of recurrence as in Lemma~\ref{lemma:Consequence_of_RSC}.

 \begin{lemma}[Sequence analysis]\label{prop:seq}
Consider a sequence $e_k\ge 0$ obeying $e^{2}_{k+1}\le e_k^{2} - a e_k+b$ for $k = 0,1,\ldots$ where $a,b >0$. We have
\begin{align*}
e_k \le \frac{\sqrt{2}e_{0}^{3/2}}{\sqrt{2e_0 + ak}},\quad k\in \{t:e_{0},\dots,e_{t+1} \ge 2b/a\}.
\end{align*}
\end{lemma}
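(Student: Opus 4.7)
The plan is to first reduce the inhomogeneous recursion to a cleaner homogeneous one using the hypothesis $e_k \ge 2b/a$, then pass to reciprocals and telescope. Specifically, on the range where $e_k \ge 2b/a$ we have $b \le ae_k/2$, so the given recurrence simplifies to
\begin{equation*}
e_{k+1}^{2} \le e_{k}^{2} - \tfrac{a}{2} e_{k}.
\end{equation*}
This will be the workhorse inequality for the rest of the argument, and it is the one place where the assumption $e_k \ge 2b/a$ enters.

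Next I would establish monotonicity: since $e_{k+1}^2 \ge 0$, the simplified recursion forces either $e_k = 0$ (in which case the bound is trivial) or $e_k \ge a/2$, and then $e_{k+1}^2 \le e_k^2$ gives $e_{k+1} \le e_k$. In particular $e_k \le e_0$ throughout the range of interest. This monotonicity will be the only global information about the sequence we need.

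The key substitution is $u_k := 1/e_k^{2}$. Rearranging the simplified recursion gives $u_{k+1} \ge u_k / (1 - a/(2e_k))$, and applying the elementary inequality $1/(1-x) \ge 1 + x$ for $x \in [0,1)$ (valid because $e_k \ge a/2$) produces
\begin{equation*}
u_{k+1} \ge u_k + \tfrac{a}{2}\, u_k^{3/2},
\end{equation*}
using the identity $u_k / e_k = u_k^{3/2}$. Now invoke monotonicity: $e_k \le e_0$ implies $u_k^{3/2} \ge u_0^{3/2} = 1/e_0^{3}$, hence $u_{k+1} - u_k \ge a/(2e_0^{3})$. Telescoping from $0$ to $k$ yields $u_k \ge 1/e_0^{2} + ak/(2e_0^{3}) = (2e_0 + ak)/(2e_0^{3})$, and inverting and taking square roots gives the claimed bound.

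The main obstacle I anticipate is justifying the step $u_{k+1} \ge u_k(1 + a/(2e_k))$, which quietly requires $e_k > a/2$ so that the reciprocal expansion is valid and the sequence does not terminate. This is a mild but slightly subtle point: one must argue that either $e_k = 0$ (in which case the rest of the sequence is zero and the inequality holds trivially) or $e_k \ge a/2$, forced by the non-negativity of $e_{k+1}^2$ in the simplified recursion. Everything else is bookkeeping: the substitution $u_k = 1/e_k^2$ linearizes the dynamics, and the monotonicity $u_k \ge u_0$ converts a state-dependent increment into the constant lower bound that can be telescoped.
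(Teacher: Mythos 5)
Your proof is correct and follows essentially the same route as the paper's: both pass to reciprocals of squares to obtain $1/e_{k+1}^{2} \ge 1/e_k^{2} + a/(2e_k^{3})$, use monotonicity ($e_k \le e_0$) to turn the state-dependent increment into the constant $a/(2e_0^{3})$, and telescope. The only cosmetic difference is that you reach the key inequality via $1/(1-x)\ge 1+x$ while the paper divides the recursion by $e_{k+1}^{2}e_k^{2}$ directly; and note that on the stated index range $e_{k+1}\ge 2b/a>0$ already forces $e_k>a/2$ strictly, so your edge-case discussion is not needed.
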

\begin{proof}
Suppose that $e_{k} \geq 2b/a$, then $e_{k+1}^{2} \leq e_{k}^{2} - b \leq e_{k}^{2}$. Dividing both sides of  $e^{2}_{k+1}\le e_k^{2} - a e_k+b$ by $e_{k+1}^{2}e_{k}^{2}$ we obtain:
\begin{align*}
    \frac{1}{e_{k}^{2}} & \leq \frac{1}{e_{k+1}^{2}} - \frac{a}{e_{k+1}^{2}e_{k}} + \frac{b}{e_{k+1}^{2}e_{k}^{2}} 
    \leq \frac{1}{e_{k+1}^{2}} - \frac{a}{e_{k+1}^{2}e_{k}} + \frac{a}{2e_{k+1}^{2}e_{k}} \\
\Rightarrow \frac{1}{e_{k}^{2}} & \leq \frac{1}{e_{k+1}^{2}} - \frac{a}{2e_{k}^{3}} \stackrel{(a)}{\leq} \frac{1}{e_{k+1}^{2}} - \frac{a}{2e_{0}^{3}} \\
\Rightarrow  0 & \leq \frac{1}{e_{0}^{2}} \leq \frac{1}{e_{K}^{2}} - \frac{aK}{2 e_{0}^{3}} \quad \textnormal{ (by summing)} \\
\Rightarrow e_{K}^{2} & \leq \frac{2e_{0}^{3}}{2e_0 + aK} \Rightarrow  e_K \leq \frac{\sqrt{2}e_{0}^{3/2}}{\sqrt{2e_0 + aK}} ,
\end{align*}
where (a) follows from the fact that the sequence is decreasing.
\end{proof}

We now apply these results to deduce Theorem~\ref{thm:INGD_Convergence}:

\begin{proof}[Proof of Theorem~\ref{thm:INGD_Convergence}]
It suffices to show $\Delta_{K} \leq \alpha(1+\rho^{\star})$ as then by the $L$-Lipschitz differentiability of $f$:
$$
f(\bfx_{K}) - f^\star \leq \frac{L}{2}\Delta_K^{2} = \frac{L\alpha^{2}(1+\rho^{\star})^{2}}{2} .
$$
By Lemma~\ref{lemma:No_Escape_Precise}, it actually suffices to show $\Delta_{\ell} \leq \alpha(1+\rho^{\star})$ for any $0\leq \ell \leq K$. So, fix any $k$ and observe that if $\Delta_{k+1} \leq \alpha\eta$ we are done as $\alpha\eta \leq \alpha\rho^{\star} \leq \alpha(1+\rho^{\star})$ by \eqref{eq:rho_greater_than_eta}. Thus, assume $\Delta_{k+1}\geq \alpha\eta$  for all $0\leq k \leq K-1$ whence by Lemma~\ref{lemma:Consequence_of_RSC}:
\begin{equation} \label{eq:Delta_k_recurrence2}
\left(\Delta_{k+1} - \alpha\eta \right)^{2} \leq \Delta_k^{2} - \frac{\alpha\nu}{L}\Delta_k + \alpha^2.
\end{equation}
Let $e_{k} = \Delta_{k} -\alpha\eta \geq 0$. One may rewrite \eqref{eq:Delta_k_recurrence2} as:
\begin{equation*}
e_{k+1}^{2} \leq e_{k}^{2} - \underbrace{\left(\frac{\alpha\nu}{L} - 2\alpha\eta \right)}_{= a}e_{k} + \underbrace{\alpha^{2}\left(\eta^{2} - \frac{\eta\nu}{L} + 1 \right)}_{=b}.
\end{equation*}
Observe that if $e_{k} <2b/a$ for any $0\leq k \leq K-1$ then by substituting in the definitions of $e_k,a$ and $b$:
\begin{align*}
   & \Delta_k - \alpha\eta < \frac{2\alpha^{2}(\eta^{2} - \frac{\eta\nu}{L} + 1)}{\frac{\alpha\nu}{L} - 2\alpha\eta} = \alpha\frac{2\eta^{2} - \frac{2\eta\nu}{L} + 2}{\frac{\nu}{L} - 2\eta} \\
 \Rightarrow & \Delta_k < \alpha\left(\frac{2\eta^{2} - \frac{2\eta\nu}{L} + 2}{\frac{\nu}{L} - 2\eta} + \eta \right) = \alpha\left(\frac{2 - \frac{\eta\nu}{L}}{\frac{\nu}{L} - 2\eta}\right) \stackrel{(a)}{\leq} \alpha\left(\frac{2 -2\eta^2}{\frac{\nu}{L} - 2\eta}\right) = \alpha\rho^{\star} \leq \alpha(1+\rho^{\star}),
\end{align*}
where (a) follows as $-\eta < -\nu/(2L)$ and again we are done. So, assume $e_{k} \geq 2b/a$ for $0\leq k \leq K-1$. From Lemma~\ref{prop:seq} we then obtain:
\begin{align*}
    \Delta_{K} - \alpha\eta & \leq \frac{\sqrt{2}\left(\Delta_0 - \alpha\eta\right)^{3/2}}{\sqrt{2\left(\Delta_0 - \alpha\eta\right) + \left(\frac{\alpha\nu}{L} - 2\alpha\eta \right)K}} \leq \frac{\sqrt{2}\left(\Delta_0 - \alpha\eta\right)^{3/2}}{\sqrt{ \left(\frac{\alpha\nu}{L} - 2\alpha\eta \right)K}} \stackrel{(a)}{=} \alpha\rho^{\star} ,
\end{align*}
where (a) follows from the choice of $K$. This implies $\Delta_K \leq \alpha(\eta +\rho^{\star}) \leq \alpha(1+\rho^{\star})$ and we are done. 
\end{proof}

\subsection{Proofs for Section~\ref{sec:SCOBO}}
\label{sec:SCOBO_Proofs}

\begin{proof}[Proof of Theorem~\ref{thm:FormalMainTheorem_2}]
If $\Delta_{k} \geq \alpha\rho^{\star}$ then as $f$ is restricted $\nu$-strongly convex: $\|\bfg_k\|_{2} \geq \alpha\nu\rho^{\star}/2$, see \eqref{eq:GEB}. Now appealing to Theorem~\ref{theorem:Grad_Estimate_Error_Bound} for the prescribed choice of $m$, we obtain $\left\|\hat{\bfg}_k - \bfg_k/\|\bfg_k\|_{2}\,\right\|_{2} \leq \eta$ with probability at least $1 - 8\exp\left(-c\eta^{4}s\log(2d/s)\right)$. Use the union bound to conclude that 
$$
\left\|\hat{\bfg}_k - \bfg_k/\|\bfg_k\|_{2}\,\right\|_{2} \leq \eta \quad\textnormal{ for } 0 \leq k \leq K-1 \text{ with } \Delta_{k} \geq \alpha\rho^{\star}
$$
with probability greater than $1 - 8K\exp\left(-c\eta^{4}s\log(2d/s)\right)$. Conditional on this, we apply Theorem~\ref{thm:INGD_Convergence} to obtain:
$$
f(\bfx_{K}) - f^\star\leq \frac{L}{2}\alpha^{2}(1+\rho^{\star})^{2}
$$
as desired.
\end{proof}

Choosing $\alpha$ such that
\begin{equation}
\varepsilon = \frac{L}{2}\alpha^{2}(1+\rho^{\star})^{2}
\label{eq:Relating_alpha_and_epsilon}
\end{equation}
yields the results as informally stated in Section~\ref{sec:SCOBO}.

\begin{proof}[Proof of Corollary~\ref{lemma:SCOBO_Guaranteed_Descent}]
As $f$ is $L$-Lipschitz differentiable and $\bfx_{k+1} = \bfx_{k} - \alpha\hat{\bfg}_k$:
\begin{align*}
f(\bfx_{k+1}) & \leq f(\bfx_{k}) - \alpha\|\bfg_k\|_{2}\langle \hat{\bfg}_k,\bfg_k/\|\bfg_k\|_{2}\rangle  + \frac{L}{2}\alpha^{2} \\
    & \stackrel{(a)}{\leq} f(\bfx_k) - \alpha\|\bfg_k\|_{2}\RV{\left(1 - \eta\right)} + \frac{L\alpha^2}{2} ,
\end{align*}
where (a) follows from $\|\hat{\bfg}_k - \bfg_k/\|\bfg_k\|_{2}\,\|_{2} \leq \eta$. Hence, $f(\bfx_{k+1}) \leq f(\bfx_k)$ as long as:
\begin{equation} \label{eq:Goth_Star}
\|\bfg_k\|_{2} \geq \frac{L\alpha}{2(1-\eta)}.
\end{equation}
So, suppose that \eqref{eq:Goth_Star} does not hold. As $f$ is restricted $\nu$-strongly $\|\bfg_k\|_2 \geq \frac{\nu}{2}\Delta_k$, see \eqref{eq:GEB}. Combining this with $L$-Lipschitz differentiability:
\begin{align*}
f(\bfx_k) - f^\star & \leq \frac{L}{2}\Delta_k^{2} \leq \frac{L}{2}\frac{4}{\nu^2}\|\bfg_k\|_{2}^{2} \\
    & \leq \frac{L^{3}\alpha^{2}}{2\nu^2(1-\eta)^2} \\
    & = \left[\frac{L}{2}\alpha\right] \left[ \frac{1}{(\nu/L)^2(1-\eta)^2}\right] \stackrel{(a)}{\leq} \left[\frac{L}{2}\alpha^2\right] \left[(1+\rho^{\star})^2\right] \stackrel{(b)}{=} \varepsilon ,
\end{align*}
where (a) is \RV{shown separately as Lemma~\ref{lm:ineq(a)}} 
%is easily checked using the definition of $\rho^{\star}$ and a computer algebra system,
and (b) holds as $\alpha$ is chosen as in \eqref{eq:Relating_alpha_and_epsilon}. 
%\RV{Note that (a) assumes $\frac{\nu}{2L}\leq0.5$. This should always hold as any $L$-Lipschitz differentiable function is also $2L$-Lipschitz differentiable (or similarly, any $\nu$-restricted-strongly convex function is also $\frac{\nu}{2}$-restricted-strongly convex).}
\end{proof}

\begin{lemma} \label{lm:ineq(a)}
\RV{Given $0<\eta<\frac{\nu}{2L}\leq 0.5$, it holds
\begin{equation*}
    \frac{1}{(\nu/L)^2(1-\eta)^2} < (1+\rho^\star)^2.
\end{equation*}
}
\end{lemma}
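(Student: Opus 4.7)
My plan is to take square roots (all quantities are positive) and reduce the inequality to
\begin{equation*}
2q(1-\eta)\bigl(1+\rho^\star\bigr) > 1, \qquad q := \frac{\nu}{2L},
\end{equation*}
so that the hypotheses become $0 < \eta < q \leq 1/2$. This is purely an algebraic check, so the proof will be a short chain of elementary bounds; the only obstacle is bookkeeping strict inequalities carefully.

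Substituting $\rho^\star = (1-\eta^2)/(q-\eta) = (1-\eta)(1+\eta)/(q-\eta)$ and distributing, I would rewrite the left-hand side as
\begin{equation*}
2q(1-\eta) + \frac{2q(1-\eta)^2(1+\eta)}{q-\eta}.
\end{equation*}
The next step is to lower bound the second term by $2(1-\eta)^2$. This amounts to checking $q(1+\eta) > q-\eta$, which rearranges to $\eta(q+1) > 0$ and is immediate from $\eta > 0$. This yields
\begin{equation*}
2q(1-\eta)(1+\rho^\star) \;>\; 2q(1-\eta) + 2(1-\eta)^2 \;=\; 2(1-\eta)\bigl(q + 1 - \eta\bigr).
\end{equation*}

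To close, I would use $q > \eta$ to get $q + 1 - \eta > 1$, and $\eta < 1/2$ (which follows from $\eta < q \leq 1/2$) to get $1-\eta > 1/2$. Multiplying, $2(1-\eta)(q+1-\eta) > 2 \cdot (1/2) \cdot 1 = 1$, and each inequality in the chain is strict, so the desired strict inequality follows. Squaring recovers the original claim with $4q^2(1-\eta)^2 = (\nu/L)^2(1-\eta)^2$ in the denominator. There is no deep obstacle here; the argument only depends on strict positivity of $\eta$ and on $q \leq 1/2$, both of which are explicit hypotheses.
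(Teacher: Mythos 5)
Your proof is correct, and it follows essentially the same route as the paper's: both reduce the claim by taking square roots (all quantities being positive) and then verify the resulting inequality $\frac{1}{(\nu/L)(1-\eta)} < 1+\rho^\star$ by elementary algebra using $0<\eta<\nu/(2L)\leq 0.5$. The only cosmetic difference is that the paper clears denominators and checks the sign of the resulting polynomial term by term, whereas you bound the two summands of $2q(1-\eta)(1+\rho^\star)$ separately, which is slightly tidier but not a different argument.
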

\begin{proof}
\RV{
Denote $p:=\frac{\nu}{2L}$. Recall that
\begin{align*}
        \rho^\star = \frac{1-\eta^{2}}{\frac{\nu}{2L} - \eta} = \frac{1-\eta^2}{p-\eta}.
\end{align*}
Since $\frac{1}{(\nu/L)(1-\eta)}>0$ and $1+\rho^\star>0$, so it is equivalent to show 
\begin{equation*}
    \frac{1}{(\nu/L)(1-\eta)} < 1+\rho^\star.
\end{equation*}
Notice that
\begin{align*}
    1+\rho^\star-\frac{1}{(\nu/L)(1-\eta)} 
    &= \frac{p-\eta+1-\eta^2}{p-\eta}-\frac{1}{2p-2p\eta} \\
    &= \frac{2p^2+p+\eta+2p\eta^3-4p\eta-2p^2\eta}{(p-\eta)(2p-2p\eta)}. 
\end{align*}
By $0<\eta<p:=\frac{\nu}{2L}\leq 0.5$, we have $2p^2>2p^2\eta$ and $p+\eta>4p\eta$. Thus, the numerator is positive, {\em i.e.}
\begin{align*}
    2p^2+p+\eta+2p\eta^3-4p\eta-2p^2\eta>0.
\end{align*}
Together with the denominator $(p-\eta)(2p-2p\eta)>0$, we conclude
\begin{align*}
    1+\rho^\star-\frac{1}{(\nu/L)(1-\eta)}  > 0 .
\end{align*}
This finishes the proof.
}
\end{proof}

\section{Working with Gaussian measurement vectors}

In stating Theorem~\ref{theorem:PlanVershynin}, we claim it is possible to use $\bfz_i$ sampled uniformly from $\mathbb{S}^{d-1}$, instead of using Gaussian $\bfz_i$ as studied in \cite{Plan2012}. As this extension is somewhat ``folklore'', here we describe how one can also use Gaussian $\bfz_i$ within SCOBO. Specifically, one can do the following:
\begin{enumerate}
    \item Sample $\bfz_1,\ldots,\bfz_m$ from the Gaussian distribution with mean $\mathbf{0}$ and covariance matrix $I$. \\
    
    \item For $i=1,\ldots, m$ define $\hat{\bfz}_i = \bfz_i/\|\bfz_i\|_2$. Then, $\hat{\bfz}_i$ are sampled uniformly from $\mathbb{S}^{d-1}$. \\
    
    \item Sample from the comparison oracle using $\hat{\bfz}_i$, {\em i.e.} $y_i = \mathcal{C}_f(x, x+r\hat{\bfz}_i)$. \\
    
    \item The arguments of Section~\ref{sec:GradEst} apply unchanged to yield $y_i = \xi_i\sign\left(\hat{\bfz}_i^\top\bfg\right)$ with $\mathbb{P}[\xi_i=1] :=p \geq 0.5 + \delta_0/2$ and $\mathbb{P}[\xi_i=-1] := 1-p \leq 0.5 - \delta_0/2$. \\
    
    \item Crucially, observe {\em scaling does not change the sign}. So $\sign\left(\hat{\bfz}_i^\top\bfg\right) = \sign\left(\bfz_i^\top\bfg\right)$ and thus $y_i = \xi_i\sign\left(\bfz_i^\top\bfg\right)$. \\
    
   \item Now solve the problem 
        \begin{equation}
            \hat{\bfx} := \argmax_{\|\bfx'\|_{1} \leq \sqrt{s} \textnormal{ and }\|\bfx'\|_{2} \leq 1} \sum\nolimits_{i=1}^{m} y_{i}\bfz_{i}^{\top}\bfx' 
        \end{equation}
    and note that the $\mathbf{z}_i$ are now Gaussian random vectors. The results of \cite{Plan2012} now apply without modification to yield the claim of Theorem~\ref{theorem:Grad_Estimate_Error_Bound}
    \end{enumerate}
    To summarize, one may ``cheat'' slightly by using $\hat{\mathbf{z}}_i$ when querying the oracle but the Gaussian $\mathbf{z}_i$ when solving the recovery problem. Because $\text{sign}(\hat{\mathbf{z}}_i^{\top}\mathbf{g})= \text{sign}(\mathbf{z}_i^{\top}\mathbf{g})$ it doesn't matter. \\
\end{document}